\theoremstyle{plain} \numberwithin{equation}{section}
\newtheorem{theorem}{Theorem}[section]
\newtheorem*{thma}{Theorem A}
\newtheorem*{thmb}{Theorem B}
\newtheorem{lemma}[theorem]{Lemma}
\newtheorem{prop}[theorem]{Proposition}
\theoremstyle{definition}
\newtheorem{notation}[theorem]{Notation}
\newtheorem*{note}{Note}
\newtheorem{defi}[theorem]{Definition}
\newtheorem{rem}[theorem]{Remark}
\newtheorem{exam}[theorem]{Example}
\newtheorem{claim}[theorem]{Claim}
\newtheorem{obs}[theorem]{Observation}
\title{Strong Solidity of the $q$-Gaussian Algebras for All $-1 < q < 1$}
\author[Stephen Avsec]{Stephen Avsec}
\address{Department of Mathematics\\
University of Illinois, Urbana, IL, USA 61801}\email[Stephen Avsec]{savsec2@math.uiuc.edu}
\begin{document}
\begin{abstract}
The main result of this paper is to establish the weak* completely contractive approximation property (w*CCAP) for the q-Gaussian algebras for all values of $q \in [-1, 1]$ and any number of generators. We use this to establish that the q-Gaussian algebras are strongly solid in the sense of Popa and Ozawa for $q \in (-1, 1)$. 
\end{abstract}
\thanks{The author acknowledges support of the National Science Foundation through a fellowship funded by the grant ``EMSW21-MCTP: Research Experience for Graduate Students'' (NSF DMS 08-38434).}

\maketitle
\section{Introduction}
Approximation properties of C*-algebras and von Neumann algebras have provided a fundamental tool for many landmark results and applications of operator algebras. The strongest approximation property is the (weak*) completely positive approximation property which was shown by Effros and Choi in \cite{effroschoi} to be equivalent to nuclearity in the C*-algebra setting and shown by Connes in \cite{connesinj} to be equivalent to injectivity in the von Neumann algebra setting. These are equivalent to amenability in the group setting. A less restrictive property is the (weak*) completely bounded approximation property, which Haagerup first introduced in \cite{haagerup79} and \cite{deCannHaagerup}.
This property has become very important following the seminal work of Ozawa and Popa (\cite{OP1} and \cite{OP2}) and the recent follow up paper of Ozawa (\cite{Ozawa11}). It is equivalent to weak amenability of groups. 

The q-Gaussian variables were introduced by Bo\.{z}ejko and Speicher in \cite{BS1} as an interpolation between classical Gaussian variables in the case q = 1, fermionic variables in the case q = -1, and Voiculescu's free Gaussians in the case q = 0. These variables can be defined functorially from a real Hilbert space $H$ as being generated by self-adjoint elements $s_q(h)$, $h \in H$, which satisfy the moment formula
\[
\tau(s_q(h_1)\dots s_q(h_n)) = \left\{ \begin{array}{lr} 0 & \mbox{ if n is odd}\\
	\sum_{\rho \in P_2(n)} q^{\iota(\rho)} \prod_{\{j,k\}} \langle h_j, h_k \rangle & \mbox{ if n is even}
\end{array}. \right.
\]
Here $P_2(n)$ denotes the set of pair partitions on the set $\{1, \ldots, n\}$ and $\iota(\rho)$ denotes the number of crossings of the pair partition $\rho$. Notice that for $q = 0$, only the non-crossing partitions survive, and so we recover Voiculescu's free Gaussian algebras.

Later, the von Neumann algebras generated by the q-Gaussian variables, denoted by $\Gamma_q(H)$, were shown to have properties similar to those of the free group factors. Note that the free group factors $L\mathbb{F}_n$ are isomorphic to $\Gamma_0(H)$ where $n = \dim(H)$ by a famous result of Voiculescu in \cite{VDN}. Indeed, for a certain range of $q$ and $\dim(H)$, Bo\.{z}ejko, K\"{u}mmerer, and Speicher in \cite{BKS} established that the q-Gaussian algebras are factors. Ricard proved in \cite{ricard05} that $\Gamma_q(H)$ is a factor for all $-1 < q < 1$ and all $\dim(H) \geq 2$. That they do not have property $\Gamma$ was established by Sniady in \cite{sniady} for a certain range of $q$ and large dimension. Nou proved that the q-Gaussian algebras are non-injective for all $-1 < q < 1$ and $\dim(H) \geq 2$ in \cite{nou}.

Shlyakhtenko proved in \cite{shlyakh04} that the q-Gaussian algebras are solid in the sense of Ozawa for $|q| < \sqrt{2}-1$ using estimates on non-microstates free entropy dimension. In \cite{shlyakh09}, Shlyakhtenko further proved that the q-Gaussian algebras do have Cartan subalgebras for a small range of $q$. Recently, Dabrowski improved these estimates in \cite{dabrowski} to prove that an $n$-tuple of q-Gaussian variables have microstate free entropy dimension $n$ for $|q|n < 1$ and $q^2 n \leq 0.0169$. This result implies the present paper's results for this range of $q$ and $n$, though we use radically different techniques.

A weaker approximation property of C*-algebras is exactness, which only requires a C*-algebra to be a subalgebra of a nuclear C*-algebra. Thus, for a group, exactness implies that the group admits an amenable action on a compact Hausdorff space. Kennedy and Nica in \cite{kennedynica} proved that the C*-algebras generated by the q-creation operators are exact for all $-1 < q < 1$ and all dimensions. As a corollary, they prove that the C*-algebra generated by the q-Gaussian variables, $\mathcal{A}_q(H)$ is also exact since exactness is inherited by subalgebras. Since the CBAP implies exactness, we recover their result in this paper. 

Our first main result is that the q-Gaussian algebras have the (weak*) completely contractive approximation property. Specifically, the net of completely bounded maps which approximate the identity can be taken to be completely contractive.  
\begin{thma}
	For all $-1 < q < 1$ and all $\dim(H) \geq 2$, 
	\begin{enumerate}
			\item $\Gamma_q(H)$ has the weak* completely contractive approximation property.
	\item $\mathcal{A}_q(H)$ has the completely contractive approximation property.

	\end{enumerate}
	\label{CCAP}
\end{thma}

To prove this, we use Nou's result from \cite{nou}, plus a theorem which expresses Wick products in $\Gamma_q(H)$ as a linear combination of the products of two shorter Wick products using an inclusion/exclusion argument. These allow us to bound the cb-norm of the projections onto polynomials of degree $n$ by $C_q n^3$. Applying the q-Ornstein-Uhlenbeck semigroup provides the exponential decay which balances this polynomial growth.

Ozawa and Popa called a von Neumann algebra $\mathcal{M}$ \emph{strongly solid} if for all diffuse injective subalgebras $P \subset \mathcal{M}$, the normalizer $\mathcal{N}_\mathcal{M}(P):= \{u \in \mathcal{U}_\mathcal{M}| u^\ast P u = P\}$ generates an injective subalgebra. For a non-injective factor, this implies the absence of Cartan subalgebras. In \cite{OP1}, they proved that the free group factors and, in \cite{OP2} that the group von Neumann algebras of lattices in $SL_2(\mathbb{R})$ and $SL_2(\mathbb{C})$ are strongly solid. Ozawa and Popa's proof relied on these groups being weakly amenable and admitting a proper 1-cocycle associated to a representation which is weakly contained in the left regular representation. These conditions permit the von Neumann algebras to have the weak* completely bounded approximation property and to admit an $L^2$-compact deformation for which the associated bimodule is weakly contained in the coarse bimodule.

Following their work, Sinclair proved in \cite{sinclair} that the group von Neumann algebras of lattices in $SO(n,1)$ and $SU(n, 1)$ are strongly solid. These groups admit only a weakly-$\ell^2$ representation. These representations were defined by Shalom in \cite{shalom00} as one for which a sufficiently large tensor power was weakly contained in the left regular representation. Finally, Chifan and Sinclair in \cite{chifansinclair} proved that for a group which is weakly amenable and admits a \emph{quasi-1-cocycle} into a weakly-$\ell^2$ representation, the group von Neumann algebra is strongly solid. This class includes all hyperbolic groups and all lattices in rank one Lie groups.

The q-Gaussian algebras admit an ``s-malleable'' deformation in the sense of Popa (see, for example, \cite{popa_malleable_1} or \cite{popa_malleable_2}). However, we show that the bimodule associated to this deformation is not weakly contained in the coarse bimodule. The situation is similar in spirit to that of \cite{sinclair} but lacks the structure of an underlying group. In this paper, we find a subbimodule which is weakly contained in the coarse bimodule, following a discussion with Jesse Peterson regarding the connection between the associated bimodule and Schatten p-class operators. Combining this and an adjustment to Popa's estimate for s-malleable deformations (see Lemma 2.1 in \cite{popa08}), we were able to establish our second main result. 

\begin{thmb}
	For all $-1 < q < 1$ and all $\dim(H) < \infty$, $\Gamma_q(H)$ is strongly solid.
\end{thmb}

The paper is organized as follows. In the next section, we fix notation and give appropriate preliminary definitions and results. In Section 3, we prove the main result. In Section 4, we find an appropriate bimodule with which to prove that the q-Gaussians are strongly solid, and in Section 5, we complete the proof of strong solidity. 

\textbf{Acknowledgments:} The author is greatly indebted to Jesse Peterson for suggesting this strategy for finding the appropriate bimodule. The author would like to thank Yoann Dabrowski for providing him with an advanced copy of his preprint, Marius Junge for many useful discussions and careful reading, Pierre Fima for many enlightening discussions, and Austin Rochford for his help in editing.

\section{Preliminaries}
We use standard notation and results from von Neumann algebra theory (see e.g. \cite{takesakiI}) and operator space theory (see e.g. \cite{ER} or \cite{pisierOS}). 

\subsection{The CBAP}
	A von Neumann algebra $\mathcal{M}$ has the \emph{weak* completely bounded approximation property} (w*CBAP) if there exists a net of completely bounded, finite-rank maps $\varphi_\alpha: \mathcal{M} \to \mathcal{M}$ such that $\varphi_\alpha \to Id$ in the point-weak* topology and such that $\|\varphi_\alpha\|_{cb} \leq C$ for all $\alpha$. The minimal such constant is called the \emph{ Cowling-Haagerup constant} and is denoted by $\Lambda_{cb}(\mathcal{M})$. $\mathcal{M}$ has the \emph{w*CCAP} if $\Lambda_{cb}(\mathcal{M}) = 1$.

Cowling and Haagerup (\cite{haagerupcowling}) proved that for a discrete group $\Gamma$, $\Lambda_{cb}(\Gamma) = \Lambda_{cb}(L\Gamma)$. Since the free groups have Cowling-Haagerup constant 1 (\cite{haagerup79}), it was known that the free group factors had the w*CCAP. The equivalent definition for a C*-algebra would simply require that the net of finite-rank maps converge to the identity in the point-norm topology.

\subsection{Operators Spaces}
We use the row and column operator space structures on an abstract Hilbert space which are given by
\[
H_c = B(H, \mathbb{C}) \hspace{1cm} \mbox{and} \hspace{1cm} H_r = B(\mathbb{C}, \bar{H}) 
\] 
respectively. We have that $(H_c)^\ast = \bar{H}_r$ and $(H_r)^\ast = \bar{H_c}$. See Section 3.4 of \cite{ER} for detailed proofs.
We shall also utilize the Haagerup tensor product. Given two operator spaces $E$ and $F$, let $E \otimes_{} F$ be their algebraic tensor product. Let $x = (x_{ij}) \in \mathbb{M}_n(E \otimes_{} F)$. Define
\[
\|x\|_{h, n} = \inf_{r \geq 1}\{\|y\|_{\mathbb{M}_{n, r}(E)} \|z\|_{\mathbb{M}_{r, n}(F)} | x_{ij} = \sum_k y_{ik} z_{kj} \}
\]
The operator space tensor product defined by these norms is called the Haagerup tensor product and is denoted by $E \otimes_{h} F$. The following two facts about the Haagerup tensor product will be very useful.

\begin{rem}
	\label{hilbertop}
	For the row and column Hilbertian operator spaces, we have from Corollaries 5.8 and 5.10 of \cite{pisierOS} and Proposition 9.3.4 of \cite{ER}
\begin{enumerate}
	\item $H_c \otimes_{h} K_c = (H \otimes_{2} K)_c$\\
	\item $H_c \otimes_{h} \bar{K}_r \simeq \mathcal{K}(K, H)$\\
	\item $H_r \otimes_{h} \bar{K}_c \simeq S_1(K, H)$
\end{enumerate}
where the last two complete isometries are given by 
\[
(\xi \otimes_{h} \eta)(k) = \langle \eta, k \rangle \xi
\]
and
\[
(\xi \otimes_{h} \eta)(k) =\langle \eta, k \rangle \xi 
\]
where $\xi \in H$, $\eta \in K$, $T \in \mathcal{K}(K, H)$, and $S_{\eta,\xi}$ is the rank one map $S_{\eta, \xi}(\bar{k}) = \langle k, \eta \rangle \xi$.
\end{rem}

\begin{lemma}
	\label{mult}
	The multiplication map 
	\[
	m: L^2_r(\mathcal{M}) \otimes_{h} \overline{L^2_c(\mathcal{M})} \to L^1(\mathcal{M})
	\]
	defined by $m(a \otimes_{h} b) = a b^\ast$ is completely contractive for any finite von Neumann algebra $\mathcal{M}$. 
\end{lemma}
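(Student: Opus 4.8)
The plan is to realize $m$ as the composition of the complete isometry from Remark~\ref{hilbertop}(3) with the canonical restriction map from trace-class operators on $L^2(\mathcal{M})$ onto the operator predual $\mathcal{M}_\ast = L^1(\mathcal{M})$. Represent $\mathcal{M}$ standardly on $L^2(\mathcal{M})$ by left multiplication, $x \mapsto L_x$, so that $\mathcal{M}$ sits inside $B(L^2(\mathcal{M}))$ as a weak$^\ast$-closed unital $\ast$-subalgebra; this inclusion is a normal complete isometry, hence its pre-adjoint
\[
\rho\colon B(L^2(\mathcal{M}))_\ast = S_1(L^2(\mathcal{M})) \longrightarrow \mathcal{M}_\ast = L^1(\mathcal{M})
\]
(restriction of normal functionals) is completely contractive, in fact a complete metric surjection. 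Here one uses that the operator space structure on $L^1(\mathcal{M})$ is by definition that of $\mathcal{M}_\ast$, and that $S_1(\mathcal{H})$ carries its canonical predual operator space structure $S_1(\mathcal{H}) = B(\mathcal{H})_\ast$.

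Next I would apply Remark~\ref{hilbertop}(3) with $H = K = L^2(\mathcal{M})$ to get a complete isometry $\Phi\colon L^2_r(\mathcal{M}) \otimes_h \overline{L^2_c(\mathcal{M})} \to S_1(L^2(\mathcal{M}))$ sending a simple tensor $a \otimes_h \bar b$ to the rank-one operator $\theta_{a,b}\colon \xi \mapsto \langle \xi, b\rangle_{L^2}\, a$ (of trace-class norm $\|a\|_2\|b\|_2$). Then, for $x \in \mathcal{M}$, a short computation with Parseval's identity gives $\langle \rho(\theta_{a,b}), x\rangle = \operatorname{Tr}(\theta_{a,b} L_x) = \langle xa, b\rangle_{L^2} = \tau(b^\ast x a) = \tau\big(x\,(ab^\ast)\big)$, where $ab^\ast \in L^1(\mathcal{M})$ by the noncommutative Hölder inequality and the last equality is traciality. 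Under $\mathcal{M}_\ast \cong L^1(\mathcal{M})$ this says that $\rho\circ\Phi$ agrees with the formula $a\otimes_h \bar b \mapsto ab^\ast$ on the algebraic tensor product; since $\rho\circ\Phi$ is already completely contractive on all of $L^2_r(\mathcal{M}) \otimes_h \overline{L^2_c(\mathcal{M})}$, it is the unique continuous extension, i.e.\ it \emph{is} $m$. Being the composition of a complete isometry with a complete contraction, $m$ is completely contractive.

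The analytic content here is modest — it is essentially packaged into Remark~\ref{hilbertop}(3) together with the elementary duality $S_1(\mathcal{H}) = B(\mathcal{H})_\ast$ — so the part that needs genuine care is the bookkeeping: one must arrange the row/column structures and conjugations so that Remark~\ref{hilbertop}(3) applies in the stated form (with $a$ on the $H_r$-side and $\bar b$ on the $\bar K_c$-side) and so that the trace-class/operator-norm pairing carries the inner-product conventions used above. A reader preferring to avoid these identifications can instead verify complete contractivity directly: unwinding the Haagerup norm, an element of the open unit ball of $M_n\big(L^2_r(\mathcal{M})\otimes_h \overline{L^2_c(\mathcal{M})}\big)$ is represented by matrices $A = (a_{ik}) \in M_{n,r}(L^2(\mathcal{M}))$ and $B = (b_{kj}) \in M_{r,n}(L^2(\mathcal{M}))$ whose row- and column-$L^2$ matrix norms are $<1$, and the desired bound $\big\|\big(\sum_k a_{ik}b_{kj}^\ast\big)_{ij}\big\|_{M_n(L^1(\mathcal{M}))} \le \|A\|\,\|B\|$ is exactly the noncommutative Cauchy–Schwarz inequality in $L^1\big(M_n(\mathbb{C})\bar\otimes\mathcal{M}\big)$.
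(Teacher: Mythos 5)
Your proof is correct and takes essentially the same route as the paper: both identify $L^2_r(\mathcal{M})\otimes_h\overline{L^2_c(\mathcal{M})}$ with $S_1(L^2(\mathcal{M}))$ and realize $m$ as the (pre)adjoint of the standard $\ast$-representation of $\mathcal{M}$ on $L^2(\mathcal{M})$ — the paper computes $m^\ast=\pi$ directly via the trace pairing, while you phrase it on the predual side as the restriction map $S_1\to\mathcal{M}_\ast$ precomposed with the isometry of Remark~\ref{hilbertop}(3), which is exactly the factorization the paper records in the remark following the lemma.
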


\begin{proof}
	We start by observing that $\overline{L_{}^{1}(\mathcal{M})}^\ast \simeq \mathcal{M}$ completely isometrically for any von Neumann algebra. See page 139 of \cite{pisierOS}. 

	Let 
	\[
	\pi: \mathcal{M}^{op} \to B(\overline{L^2(\mathcal{M})}) 
	\]
	be the usual right representation defined by $\pi(x)\hat{a} = \widehat{xa}$ where $\hat{a}$ denotes the image in $L_{}^{2}(\mathcal{M})$ of $a \in \mathcal{M}$. We claim that $\pi = m^\ast$. Let $x, a, b \in \mathcal{M}$. Let $\widehat{a} \in L_{r}^{2}(\mathcal{M})$ and $\hat{b} \in L_{c}^{2}(\mathcal{M})$ be the canonical images of $a$ and $b$ in the row and column spaces respectively, and let $\left\{ e_i \right\}$ be an orthonormal basis of $L_{}^{2}(\mathcal{M})$. We have
	\begin{align*}
		\langle \pi(x), \hat{a} \otimes_{h} \hat{b} \rangle &= Tr\left( \pi(x)^\ast \left( \hat{a} \otimes_{h} \hat{b} \right) \right)\\
		&= \sum_{i} \langle e_i, \pi(x)^\ast \left( \widehat{a} \otimes_{h} \hat{b} \right)e_i \rangle \\
		&= \sum_{i} \langle \pi(x)e_i, \langle \hat{b}, e_i \rangle \hat{a} \rangle \\
		&= \sum_{i} \langle  \pi(x)\langle e_i, \hat{b} \rangle e_i, \hat{a} \rangle \\
		&= \langle \pi(x)\widehat{b}, \hat{a} \rangle = \langle \widehat{xb}, \hat{a}\rangle = \tau(b^\ast x^\ast a) = \tau(x^\ast a b^\ast) = \langle x, m(\hat{a} \otimes_{h} \hat{b}) \rangle 
	\end{align*}
	Therefore, $m^\ast = \pi$ since $\mathcal{M}$ is dense in $L_{}^{2}(\mathcal{M})$ and so $\|m\|_{cb} = \|\pi\|_{cb} = 1$ since $\pi$ is a $\ast$-homomorphism.
\end{proof}

\begin{rem}
	Note that for every von Neumann algebra $\mathcal{M}$ there is a quotient map
	\[
	q: S_1(L_{}^{2}(\mathcal{M})) \to L_{}^{1}(\mathcal{M})
	\]
	which is the predual of the canonical representation
	\[
	\pi: \mathcal{M} \to B(L_{}^{2}(\mathcal{M})).
	\]
	The map $m$ above is simply this quotient map once we identify $S_1(L_{}^{2}(\mathcal{M}))$ with $L_{r}^{2}(\mathcal{M}) \otimes_{h} \overline{L_{c}^{2}(\mathcal{M})}$. 
\end{rem}

\subsection{The q-Fock Space and q-Gaussian Algebras}
Let $H$ be a real Hilbert space, $H_\mathbb{C} = H \otimes_\mathbb{R} \mathbb{C}$ its complexification, and $\mathcal{F}(H) = \oplus_{n \geq 0} H_\mathbb{C}^{\otimes n}$ be the algebraic Fock space over $H_\mathbb{C}$. Here $H_\mathbb{C}^{\otimes 0}$ is understood to be a one-dimensional space spanned by a unit vector $\Omega$. In \cite{BS1}, Bo\.{z}ejko and Speicher defined the following sesquilinear form on $\mathcal{F}(H)$.
\[
\langle h_1 \otimes \ldots \otimes h_n, k_1 \otimes \ldots \otimes k_m \rangle =\delta_{m, n} \sum_{\sigma \in S_n} q^{\iota(\sigma)} \prod_j \langle h_j, k_{\sigma(j)} \rangle 
\]
where $S_n$ denotes the symmetric group on $n$ characters, $\iota(\sigma)$ denotes the number of inversions of $\sigma \in S_n$, and $-1 \leq q \leq 1$. By the main result in \cite{BS1}, this form is nonnegative definite, in fact, strictly positive definite if $-1 < q < 1$ for each $n$ and thus defines an inner product. Denote by $\mathcal{F}_q(H)$ the Hilbert space completion of $\mathcal{F}(H)$ with respect to this inner product. Now for $h\in H$,$ h_1, \ldots, h_n \in H_\mathbb{C}$, define

\[
l_q(h)h_1 \otimes \ldots \otimes h_n = h \otimes h_1 \otimes \ldots \otimes h_n
\]
to be the left creation operator, and its adjoint the left annihilation operator

\[
l_q^\ast(h)h_1 \otimes \cdots \otimes h_n = \sum_{j=1}^n q^{j-1} \langle h, h_j \rangle h_1 \otimes \cdots \otimes \hat{h}_j \otimes \cdots \otimes h_n
\]
where $\hat{h}_j$ indicates that $h_j$ is omitted from the tensor. By \cite{BS1}, $l_q(h) \in B(F_q(H))$ for $-1 \leq q < 1$, and $l_q(h)$ is closable for $q = 1$. Let $s_q(h) = l_q(h) + l_q(h)^\ast$. We define the q-Gaussian von Neumann algebra to be 
\[
\Gamma_q(H) := \{s_q(h) | h \in H\}''
\]
for $-1 \leq q < 1$, and 
\[
\Gamma_1(H) = \{e^{is_1(h)} | h \in H\}''. 
\]
Furthermore, in \cite{BKS}, it is proved in Proposition 2.3 that the vector $\Omega$ is cyclic and separating, and defines a finite trace $\tau(x) = \langle \Omega, x \Omega \rangle $. Therefore, $\Gamma_q(H)$ is a finite von Neumann algebra in standard form. The following two results can also be found in \cite{BKS} (Proposition 2.7 and Theorem 2.11 respectively).

\begin{theorem}
	For each $\xi \in \mathcal{F}(H)$ there exists a unique element $W(\xi) \in \Gamma_q(H)$ such that $W(\xi) \Omega = \xi$. $W(\xi)$ is called the \emph{Wick product} of $\xi$.
	\label{wick}
\end{theorem}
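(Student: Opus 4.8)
I would deduce uniqueness immediately from the fact that $\Omega$ is separating, and obtain existence from an induction on tensor degree which simultaneously places $W(\xi)$ inside the small unital $\ast$-subalgebra $\mathcal{A} \subseteq \Gamma_q(H)$ generated by $\{s_q(h) : h \in H\}$. For uniqueness: if $W_1, W_2 \in \Gamma_q(H)$ satisfy $W_i\Omega = \xi$, then $(W_1 - W_2)\Omega = 0$, and since $\Omega$ is separating for $\Gamma_q(H)$ (Proposition 2.3 of \cite{BKS}) we get $W_1 = W_2$. In particular $x \mapsto x\Omega$ is injective on $\Gamma_q(H)$, hence on $\mathcal{A}$.

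\textbf{Existence.} It suffices to prove $\mathcal{A}\,\Omega = \mathcal{F}(H)$, the algebraic Fock space; then $W := (x \mapsto x\Omega)^{-1}\colon \mathcal{F}(H) \to \mathcal{A} \subseteq \Gamma_q(H)$ is the required assignment and is automatically linear. The inclusion $\mathcal{A}\,\Omega \subseteq \mathcal{F}(H)$ is clear: $s_q(h) = l_q(h) + l_q^\ast(h)$ changes tensor degree by $\pm 1$, so any word $s_q(h_1)\cdots s_q(h_k)\Omega$, and any finite linear combination of such words, lies in the finite-degree part $\bigoplus_{j} H_{\mathbb C}^{\otimes j}$. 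For the reverse inclusion I induct on degree. The degree-zero part is $\mathbb{C}\,\Omega = \mathbb{C}\cdot 1 \cdot \Omega$. Assuming $H_{\mathbb C}^{\otimes j} \subseteq \mathcal{A}\,\Omega$ for all $j \le n$, and using that $H_{\mathbb C}^{\otimes(n+1)}$ is the complex span of simple tensors $h_1 \otimes \cdots \otimes h_{n+1}$ with every $h_i \in H$, it is enough to put such a tensor in $\mathcal{A}\,\Omega$. The point is that the $q$-deformation affects only the annihilation operator: for $\xi \in H_{\mathbb C}^{\otimes n}$ one has $s_q(h_1)\xi = h_1 \otimes \xi + l_q^\ast(h_1)\xi$, with $l_q^\ast(h_1)\xi$ of degree $n - 1$. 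Picking $\xi = h_2 \otimes \cdots \otimes h_{n+1}$ and $a \in \mathcal{A}$ with $a\Omega = \xi$ (induction hypothesis plus injectivity), the element $s_q(h_1)a \in \mathcal{A}$ satisfies
\[
\big(s_q(h_1)\,a\big)\Omega \;=\; h_1 \otimes h_2 \otimes \cdots \otimes h_{n+1} \;+\; l_q^\ast(h_1)\big(h_2 \otimes \cdots \otimes h_{n+1}\big),
\]
and the last term, of degree $n-1$, already lies in $\mathcal{A}\,\Omega$; hence so does $h_1 \otimes \cdots \otimes h_{n+1}$. This closes the induction. (Equivalently, one can phrase this as the recursion $W(h \otimes \xi) = s_q(h)W(\xi) - W(l_q^\ast(h)\xi)$, extended complex-linearly in $h$ with the usual care for the conjugate-linearity of $l_q^\ast$.) Membership and boundedness are then free, since $\mathcal{A}$ is a $\ast$-subalgebra of $\Gamma_q(H)$ and each $s_q(h)$ is bounded for $-1 \le q < 1$ by \cite{BS1}: $W(\xi)$ is a finite sum of finite products of such operators.

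\textbf{Main point of care.} There is no genuine difficulty, but the argument must deliver $W(\xi)$ \emph{inside} the $\ast$-algebra generated by the $s_q(h)$, not merely in its weak closure; this is precisely why one exploits the degree-raising structure of $s_q(h) = l_q(h) + l_q^\ast(h)$ — the undeformed creation part supplying the top-degree term $h \otimes \xi$ — rather than an abstract density argument, and it is the step at which one checks the $q$-deformation is harmless. For $q = 1$, where $s_1(h)$ is only closable, the same recursion yields operators affiliated with $\Gamma_1(H)$ rather than bounded ones; I would treat that case separately if it were needed.
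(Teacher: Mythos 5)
Your proof is correct, and it reproduces the standard argument: the paper itself does not prove this statement but cites it from Bo\.{z}ejko--K\"{u}mmerer--Speicher (Proposition 2.7 of \cite{BKS}), and the induction you give --- uniqueness from $\Omega$ being separating, existence via the recursion $W(h\otimes\xi) = s_q(h)W(\xi) - W\big(l_q^\ast(h)\xi\big)$ extracting the top-degree term $h\otimes\xi$ from $s_q(h)\xi$ --- is precisely how that result is established. Your extra remark that this places $W(\xi)$ in the finite-product $\ast$-algebra (not just the von Neumann closure) is a useful clarification, and you are right that $q=1$ would need separate treatment since $s_1(h)$ is unbounded.
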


\begin{theorem}
	Let $u : H \to K$ be a contractive map between real Hilbert spaces. There exists a trace-preserving unital completely positive (cput) map $\Gamma_q(u) : \Gamma_q(H) \to \Gamma_q(K)$ such that 
	\begin{enumerate}
		\item $\Gamma_q(u)$ is a *-automorphism if $u$ is an orthogonal tranformation.
		\item $\Gamma_q(u)$ is a *-embedding if $u$ is an inclusion.
		\item $\Gamma_q(u)$ is a conditional expectation if $u$ is a projection.
	\end{enumerate}
	\label{secquant}
\end{theorem}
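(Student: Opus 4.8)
The plan is to reduce an arbitrary contraction to three elementary building blocks, produce $\Gamma_q$ on each by hand, and compose. The device is Halmos' unitary dilation: given a contraction $u : H \to K$, set
\[
\mathcal U = \begin{pmatrix} u & (1-uu^\ast)^{1/2}\\ (1-u^\ast u)^{1/2} & -u^\ast\end{pmatrix} : H\oplus K \longrightarrow K \oplus H .
\]
Using the intertwining identity $u^\ast f(uu^\ast) = f(u^\ast u)u^\ast$ (with $f(t)=(1-t)^{1/2}$) one checks that $\mathcal U$ is a surjective isometry, i.e.\ an orthogonal transformation, and that $u = P_K\,\mathcal U\,\iota_H$, where $\iota_H : H \hookrightarrow H\oplus K$ is the inclusion and $P_K : K\oplus H \to K$ is the orthogonal projection onto the first summand. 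Hence it suffices to define $\Gamma_q$ on orthogonal transformations, on inclusions, and on orthogonal projections, with the three asserted properties, and to put $\Gamma_q(u) := \Gamma_q(P_K)\circ\Gamma_q(\mathcal U)\circ\Gamma_q(\iota_H)$; a composition of unital, trace-preserving, completely positive maps is again of that kind.

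For an orthogonal transformation $v : H \to K$, extend $v$ to a unitary $v_\mathbb C : H_\mathbb C \to K_\mathbb C$ and define $\mathcal F_q(v) : \mathcal F_q(H) \to \mathcal F_q(K)$ by $\mathcal F_q(v)(h_1\otimes\cdots\otimes h_n) = v_\mathbb C h_1 \otimes\cdots\otimes v_\mathbb C h_n$ and $\mathcal F_q(v)\Omega = \Omega$; since the Bo\.zejko--Speicher inner product depends only on the scalars $\langle h_i, h_j\rangle$, this is a unitary. It conjugates $s_q(h)$ to $s_q(vh)$, hence restricts to a $\ast$-isomorphism $\Gamma_q(v) : \Gamma_q(H)\to\Gamma_q(K)$, which is trace preserving because $\mathcal F_q(v)\Omega = \Omega$; this gives property (1). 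For an inclusion $\iota : H \hookrightarrow K$ we have $\mathcal F_q(H) \subseteq \mathcal F_q(K)$, each $s_q(h)$, $h \in H$, leaves $\mathcal F_q(H)$ invariant and there acts as the original generator, and one checks (this passes to WOT-limits) that $N := \{s_q(h) : h\in H\}'' \subseteq B(\mathcal F_q(K))$ also leaves $\mathcal F_q(H)$ invariant, with $\overline{N\Omega} = \mathcal F_q(H)$. The restriction $\tau_K|_N = \langle\Omega,\,\cdot\,\Omega\rangle$ is a faithful normal trace on $N$ (faithfulness because $\Omega$ is separating for $\Gamma_q(K)$), the associated GNS Hilbert space is $\mathcal F_q(H)$, and the moments $\tau_K(s_q(h_1)\cdots s_q(h_n))$ for $h_i\in H$ are exactly the universal $q$-Gaussian moments, hence agree with those of $\Gamma_q(H)$. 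By uniqueness of the GNS construction for a faithful normal trace, the ``identity on generators'' map extends to a trace-preserving normal $\ast$-isomorphism $\Gamma_q(\iota) : \Gamma_q(H) \xrightarrow{\ \sim\ } N \subseteq \Gamma_q(K)$; this is property (2).

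For an orthogonal projection $p : H \to H_0$ onto a subspace, realize $\Gamma_q(H_0) \subseteq \Gamma_q(H)$ as in the previous paragraph. Since $\tau = \langle\Omega,\,\cdot\,\Omega\rangle$ is a \emph{trace} on $\Gamma_q(H)$, Umegaki's theorem furnishes a unique trace-preserving normal conditional expectation $E : \Gamma_q(H) \to \Gamma_q(H_0)$, which at the Hilbert space level is the restriction of the orthogonal projection $e_{H_0}:\mathcal F_q(H) \to \mathcal F_q(H_0)$, i.e.\ $E(x)\Omega = e_{H_0}(x\Omega)$; put $\Gamma_q(p) := E$, giving property (3). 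Composing, $\Gamma_q(u) = E_K \circ \Gamma_q(\mathcal U) \circ \Gamma_q(\iota_H)$ is unital, completely positive and trace preserving, and on generators $\Gamma_q(u)(s_q(h)) = E_K\big(s_q(\mathcal U(h,0))\big)$; since $\mathcal U(h,0) = (uh,(1-u^\ast u)^{1/2}h)$, applying $\Omega \mapsto s_q(\cdot)\Omega$ and the Fock projection $e_K$ (which sends this degree-one vector to $uh$), and using that $\Omega$ is separating, we get $\Gamma_q(u)(s_q(h)) = s_q(uh)$ as expected. It remains to check that the degenerate dilations recover the three special cases: if $u$ is itself orthogonal then $(1-u^\ast u)^{1/2} = 0 = (1-uu^\ast)^{1/2}$, so $\mathcal U$ is block diagonal and $\Gamma_q(u)$ collapses to the orthogonal-transformation construction; if $u$ is an inclusion then $(1-u^\ast u)^{1/2}=0$ and $E_K$ is the identity on the relevant copy of $\Gamma_q(H)$ inside $\Gamma_q(K)$, so $\Gamma_q(u)$ is the embedding; if $u$ is a projection then $(1-uu^\ast)^{1/2}=0$ and $\Gamma_q(u)$ is a unital completely positive idempotent onto $\Gamma_q(H_0)$, hence a conditional expectation by Tomiyama's theorem.

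I expect the only genuinely non-formal points to be the two ``by hand'' constructions: for the inclusion, that the generated subalgebra $N$ really \emph{is} $\Gamma_q(H)$ --- which rests on faithfulness of the vacuum trace (so that matching moments force an isomorphism of the GNS data) together with the universality of the $q$-Gaussian moment formula --- and for the projection, the availability of Umegaki's expectation, which is automatic precisely because $\tau$ is a trace. (For $q=1$ one must keep in mind that $l_1(h)$ is merely closable, but the Fock-space arguments are insensitive to this.) Everything else --- the dilation identity, the degeneration of the three cases, and Tomiyama's theorem --- is routine bookkeeping.
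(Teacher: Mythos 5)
The paper does not prove this statement; it is imported verbatim from Bo\.{z}ejko--K\"{u}mmerer--Speicher (cited as Proposition 2.7 and Theorem 2.11 of \cite{BKS}), so there is no in-paper argument to compare against. Your proof via the Halmos unitary dilation $\mathcal{U}$ of a contraction $u$, reducing to the three elementary cases (orthogonal, isometric inclusion, orthogonal projection) and composing, is correct and is the standard argument for positivity of second quantization in this setting; it is essentially the route taken in \cite{BKS} as well. The key non-formal ingredients are all present: the intertwining identity $u^\ast f(uu^\ast)=f(u^\ast u)u^\ast$ makes $\mathcal{U}$ orthogonal; the $q$-inner products are compatible with the inclusion $\mathcal{F}(H)\subset\mathcal{F}(K)$ so that $\mathcal{F}_q(H)$ sits isometrically in $\mathcal{F}_q(K)$; faithfulness of the vacuum trace gives the trace-preserving embedding; and Umegaki/Tomiyama supply the conditional expectation.

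Two points you compress and might want to spell out if this were to go in the paper. First, you use that the orthogonal projection of $\mathcal{F}_q(H)$ onto $\mathcal{F}_q(H_0)$ is $\bigoplus_n p^{\otimes n}$; since the Bo\.{z}ejko--Speicher inner product is not the usual tensor-product inner product this is not automatic, though it does follow from a one-line computation (pair an arbitrary tensor against an $H_0$-tensor and pull $p$ through each scalar $\langle h_j, k_{\sigma(j)}\rangle$). Second, the ``uniqueness of GNS'' step for the inclusion is cleaner if phrased as: restriction of $N=\{s_q(h):h\in H\}''\subset B(\mathcal{F}_q(K))$ to the $N$-invariant subspace $\mathcal{F}_q(H)$ is a normal $\ast$-homomorphism, injective because $\Omega$ is separating for $\Gamma_q(K)\supseteq N$, with image $\{s_q(h)|_{\mathcal{F}_q(H)}\}''=\Gamma_q(H)$. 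Finally, your remark that $\Gamma_q(u)(x)\Omega=\mathcal{F}_q(u)(x\Omega)$ (so the construction is independent of the dilation and is functorial) is worth keeping, since the paper later invokes the semigroup law $T_sT_t=T_{s+t}$ ``by functoriality,'' which the bare existence statement does not by itself deliver.
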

In this sense, $\Gamma_q$ can be seen as a functor between the category of real Hilbert spaces with contractions and the category of $II_1$ factors with completely positive maps (see \cite{VDN} for the free case). 

\subsection{Wick products}
The following two maps were introduced in \cite{BKS} and studied in a very general setting in \cite{krolak}. Define a map 
\[
U_{n, k} : H_c^{\otimes n-k} \otimes_h \bar{H}_r^{\otimes k} \to B(\mathcal{F}_q(H)) 
\]
by 
\begin{align*}
	&U_{n,k} \left( (h_1 \otimes \ldots \otimes h_{n-k}) \otimes_h (\bar{h}_{n-k+1} \otimes \ldots \otimes \bar{h}_n)  \right)
	&= l_q(h_1) \ldots l_q(h_{n-k}) l_q^\ast(\bar{h}_{n-k+1}) \ldots l_q^\ast(\bar{h}_n)
\end{align*}
	and 
	\[
	R_{n,k}: \bar{H}_r^{\otimes_{}n-k} \otimes_{h} H_c^{\otimes_{}k} \to H^{\otimes_{}n} 	\]
	by
	\[
	R_{n,k}(h_1 \otimes_{} \cdots \otimes_{} h_{n-k} \otimes_{h} h_{n-k+1} \otimes_{} \cdots \otimes_{} h_n) = h_1 \otimes_{}\cdots \otimes_{} h_n
	\]
	It is shown in \cite{nou} that
	\begin{align*}
		&R_{n,k}^\ast \left( h_1 \otimes \ldots \otimes h_n  \right)\\
		&=\sum_{x \in S_n/S_{n-k} \times S_k} q^{\iota(x)} (h_{\sigma_x(1)} \otimes \ldots \otimes h_{\sigma_x(n-k)}) \otimes_h (h_{\sigma_x(n-k+1)} \otimes \ldots \otimes h_{\sigma_x(n)})
	\end{align*}
		where $x \in S_n/S_{n-k} \times S_k$ are right cosets, $\sigma_x \in x$ is the representative of $x$ with the fewest inversions, and $\iota(x) = \iota(\sigma_x)$.
		The following theorem is Theorem 1 in \cite{krolak} 
\begin{theorem}
	\label{wick}
	Let $\xi \in H^{\otimes n}$. 
	\[
	W(\xi) = \sum_{k = 0}^n U_{n, k} R_{n, k}^\ast(\xi) 	\]
for $\xi = h_1 \otimes \ldots \otimes h_n$.
\end{theorem}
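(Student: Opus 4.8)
The plan is to argue by induction on $n$, the engine being a recursion for Wick products that both sides of the claimed identity satisfy. Throughout, write $\xi = h_1 \otimes \cdots \otimes h_n$ and $\xi' = h_2 \otimes \cdots \otimes h_n \in H^{\otimes(n-1)}$. The first step is to record the elementary recursion
\[
W(h \otimes \eta) \;=\; s_q(h)\,W(\eta) \;-\; W\bigl(l_q^\ast(h)\,\eta\bigr), \qquad h \in H,\ \eta \in H^{\otimes m}.
\]
Indeed the right-hand side lies in $\Gamma_q(H)$, which is an algebra containing $s_q(h)$, $W(\eta)$, and --- since $l_q^\ast(h)\eta \in \mathcal{F}(H)$ --- also $W(l_q^\ast(h)\eta)$; applying it to $\Omega$ yields $s_q(h)\eta - l_q^\ast(h)\eta = l_q(h)\eta = h \otimes \eta$, using $s_q(h) = l_q(h) + l_q^\ast(h)$. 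Since $\Omega$ is separating and $W(h\otimes\eta)$ is characterized by $W(h\otimes\eta)\Omega = h\otimes\eta$, the displayed identity follows.

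Next, set $T_n(\xi) := \sum_{k=0}^{n} U_{n,k}R_{n,k}^\ast(\xi)$, extended linearly to $H^{\otimes n}$; then $T_0 = 1 = W(\Omega)$ and $T_1(h) = l_q(h) + l_q^\ast(h) = s_q(h) = W(h)$. The theorem will follow by induction once we show that $T_n$ satisfies the \emph{same} recursion, namely $T_n(\xi) = s_q(h_1)\,T_{n-1}(\xi') - T_{n-2}\bigl(l_q^\ast(h_1)\,\xi'\bigr)$. This is the combinatorial heart of the argument. Recalling the formula for $R_{n,k}^\ast$, a generic summand of $T_n(\xi)$ is $q^{\iota(x)}\, l_q(h_{c_1})\cdots l_q(h_{c_{n-k}})\,l_q^\ast(h_{a_1})\cdots l_q^\ast(h_{a_k})$, indexed by a splitting $\{1,\dots,n\} = C \sqcup A$ into increasing sequences $c_1 < \cdots < c_{n-k}$ and $a_1 < \cdots < a_k$, where $\iota(x) = \#\{(c,a)\in C\times A : c>a\}$. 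The index $1$ is the first element of $C$ or the first element of $A$, and I would split $T_n(\xi)$ accordingly. The summands with $c_1 = 1$ are exactly $l_q(h_1)$ applied on the left to the summands of $T_{n-1}(\xi')$ (deleting the smallest element $1$ from the creation block changes no inversions), so they contribute $l_q(h_1)T_{n-1}(\xi')$. For the remaining summands I would push $l_q^\ast(h_1)$ through the creation block of each summand of $T_{n-1}(\xi')$ using $l_q^\ast(h)l_q(g) = q\,l_q(g)l_q^\ast(h) + \langle h,g\rangle$: the ``pass-through'' term, in which $l_q^\ast(h_1)$ commutes past all $n-k = n-1-(k-1)$ creation operators with weight $q^{\,n-k}$, produces exactly the summands of $T_n(\xi)$ with $a_1 = 1$, whose inversion count exceeds that of the underlying $(n-1)$-splitting by precisely $n-k$, the number of inversions of $1$ against the creation block.

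It remains to match the ``contraction'' terms: when $l_q^\ast(h_1)$ annihilates the $p$-th creation operator $l_q(h_{c_p})$ in a summand of $T_{n-1}(\xi')$ indexed by a splitting $x'$, the result is $q^{\,\iota(x')+(p-1)}\langle h_1,h_{c_p}\rangle$ times the shortened product, and these must cancel against the terms of $-T_{n-2}(l_q^\ast(h_1)\xi') = -\sum_{j=2}^{n} q^{\,j-2}\langle h_1,h_j\rangle\, T_{n-2}(h_2 \otimes \cdots \otimes \hat h_j \otimes \cdots \otimes h_n)$. Under the evident bijection ($j \leftrightarrow c_p$; the shortened splitting $x''$ on $\{2,\dots,n\}\setminus\{c_p\}$ $\leftrightarrow$ $x'$ with $c_p$ removed from $C$), this reduces to the identity $\iota(x') + (p-1) = \iota(x'') + (c_p - 2)$, which holds because $\iota(x'') = \iota(x') - \#\{a \in A : a < c_p\}$ while $(p-1) + \#\{a \in A : a < c_p\} = c_p - 2$ --- both sides counting the elements of $\{2,\dots,n\}$ lying below $c_p$, sorted by membership in $C$ versus $A$. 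Assembling the three pieces yields the recursion for $T_n$, and the induction closes.

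The step I expect to be the main obstacle is this last bookkeeping: partitioning the summands of $T_n(\xi)$ correctly according to the position of the index $1$, and checking that the $q$-weights produced by commuting $l_q^\ast(h_1)$ through a creation block agree term by term with the weights prescribed by $\iota$ at the $(n-1)$- and $(n-2)$-levels. A conceptually different route --- verify directly that $T_n(\xi)\Omega = \xi$, which is immediate since every summand with $k\geq 1$ ends in an annihilation operator, and that $T_n(\xi)$ commutes with all right $q$-Gaussian operators $r_q(g)$, hence lies in $\Gamma_q(H)$ by the Bo\.{z}ejko--Speicher commutation theorem --- runs into essentially the same $q$-commutator computations, now with the right operators, so I would keep to the induction.
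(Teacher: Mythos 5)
Your proof is correct. One point worth noting up front: the paper does not actually prove this statement --- it cites it as Theorem~1 of the Kr\'olak reference, so there is no ``paper's own proof'' to compare against. That said, your inductive argument is the standard way to establish the Wick formula, and every step you sketch goes through. In particular: the recursion $W(h\otimes\eta) = s_q(h)W(\eta) - W(l_q^\ast(h)\eta)$ follows from applying both sides to the separating vector $\Omega$; the split of the summands of $T_n(\xi)$ according to whether $1$ lies in the creation block $C$ or the annihilation block $A$ is the right decomposition; the pass-through term matches the $a_1=1$ summands because deleting $1$ from $A$ removes exactly $|C|=n-k$ inversions; and the contraction bookkeeping
\[
\iota(x') + (p-1) \;=\; \iota(x'') + (c'_p - 2)
\]
holds since both sides count $|\{2,\dots,c'_p-1\}| = c'_p-2$ once you account for the $\#\{a\in A' : a<c'_p\}$ inversions lost when $c'_p$ is removed from $C'$. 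The $q$-deformed relation $l_q^\ast(f)l_q(g) = q\,l_q(g)l_q^\ast(f) + \langle f,g\rangle$ that you invoke is exactly right for this Fock space. The only cosmetic remark is that you should say ``base cases $n=0$ and $n=1$'' explicitly, since the recursion reaches back two steps; you do check both, so this is just a matter of phrasing. Your alternative route (verify $T_n(\xi)\Omega=\xi$ and commutation with the right $q$-Gaussians) is also viable and is in fact how some authors proceed, but as you observe it involves the same commutator bookkeeping with the right creation/annihilation operators, so the induction is the cleaner presentation.
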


\begin{obs}
	\label{rnk}
	We may associate to any right coset $x \in S_n/S_{n-k} \times S_k$ a subset $A \subset \{1, \ldots, n\}$ such that $|A| = k$ in the following way. For any permutation $\sigma \in x$, $\sigma(j) \in A^c$ for $1 \leq j \leq n-k$ and $\sigma(j) \in A$ for $n-k + 1 \leq j \leq n$. Suppose $A^c = (\alpha_1, \ldots, \alpha_{n-k})$ and $A = (\beta_1, \ldots, \beta_k)$. Then 
	\[
	\sigma_x(1, \ldots, n) = (\alpha_1, \ldots, \alpha_{n-k}, \beta_1, \ldots, \beta_k)
	\]
	From now on, we may replace a \emph{right coset} $x \in S_n/S_{n-k} \times S_k$ with its corresponding \emph{subset} of cardinality $k$ where convenient.
\end{obs}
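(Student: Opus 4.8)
The plan is to verify the stated correspondence directly from the group theory underlying the coset space. Throughout I identify $S_{n-k}\times S_k$ with the subgroup $H\le S_n$ of permutations $\rho$ satisfying $\rho(\{1,\dots,n-k\})=\{1,\dots,n-k\}$ and $\rho(\{n-k+1,\dots,n\})=\{n-k+1,\dots,n\}$, i.e.\ the stabilizer of the ordered partition $\{1,\dots,n\}=\{1,\dots,n-k\}\sqcup\{n-k+1,\dots,n\}$; the cosets in $S_n/(S_{n-k}\times S_k)$ are then the sets $x=\sigma H$ (this is the side convention inherited from the displayed formula of \cite{nou}, and the argument below is symmetric in the opposite convention).

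\emph{Well-definedness and bijectivity.} Fix $x$ and representatives $\sigma,\sigma'=\sigma\rho\in x$ with $\rho\in H$. Since $\rho$ preserves each block, $\sigma'(\{n-k+1,\dots,n\})=\sigma\bigl(\rho(\{n-k+1,\dots,n\})\bigr)=\sigma(\{n-k+1,\dots,n\})$, and likewise on the first block; hence $A:=\sigma(\{n-k+1,\dots,n\})$ and $A^c:=\sigma(\{1,\dots,n-k\})$ depend only on $x$, partition $\{1,\dots,n\}$, and satisfy $|A|=k$ --- precisely the second sentence of the observation. If $\sigma\in x$ and $\tau\in y$ give the same $A$, then $\sigma^{-1}\tau$ preserves both blocks, so $\sigma^{-1}\tau\in H$ and $x=y$; this is injectivity. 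For surjectivity, given $A$ with $|A|=k$, write $A^c=\{\alpha_1<\dots<\alpha_{n-k}\}$, $A=\{\beta_1<\dots<\beta_k\}$, and let $\sigma_A\in S_n$ send $i\mapsto\alpha_i$ for $1\le i\le n-k$ and $n-k+j\mapsto\beta_j$ for $1\le j\le k$; then $\sigma_A H\mapsto A$. As both sides have cardinality $\binom nk$, $x\mapsto A$ is the asserted bijection.

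\emph{The minimal representative.} I claim $\sigma_x=\sigma_A$. Write a general element of the coset as $\sigma=\sigma_A\rho$, $\rho\in H$, and partition the inversions of $\sigma$ into those with both indices in $\{1,\dots,n-k\}$, those with both in $\{n-k+1,\dots,n\}$, and those straddling the two blocks. Since $\sigma_A$ is strictly increasing on each block separately, an intra-block pair is an inversion of $\sigma$ iff it is an inversion of $\rho$, whereas a straddling pair $i\le n-k<j$ is an inversion of $\sigma$ iff the corresponding pair $\bigl(\sigma_A(\rho(i)),\sigma_A(\rho(j))\bigr)\in A^c\times A$ is out of order --- a condition independent of $\rho$. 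Therefore $\iota(\sigma)=\iota(\sigma_A)+\iota(\rho)\ge\iota(\sigma_A)$, with equality exactly when $\rho=\mathrm{id}$; so $\sigma_A$ is the unique fewest-inversions representative, $\sigma_x(1,\dots,n)=(\alpha_1,\dots,\alpha_{n-k},\beta_1,\dots,\beta_k)$, and as a bonus $\iota(x)=\#\{(a,b)\in A^c\times A:\ a>b\}$.

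Everything here is elementary coset combinatorics, so I do not anticipate a genuine obstacle; the only steps asking for a bit of care are the coset-side bookkeeping and the three-way split of the inversion count, which is the crux. Once this is in place, one may, as the observation says, freely identify a coset $x\in S_n/(S_{n-k}\times S_k)$ with its $k$-element subset of $\{1,\dots,n\}$ when describing $R_{n,k}^\ast$, $U_{n,k}$, or $\iota(\,\cdot\,)$ in the sequel.
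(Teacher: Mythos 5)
Your proof is correct and complete; the paper states this as an unproved \emph{Observation}, so there is no argument in the paper to compare against. You correctly flag the coset-convention subtlety: the paper's phrase ``right coset'' must be read as $\sigma H$ (so that the \emph{image} of each block, rather than its preimage, is constant along the coset), which is the only reading under which the Observation's second sentence holds, and you check that this convention matches the displayed formula for $R_{n,k}^\ast$. The three-way split of the inversion count (two intra-block types plus the straddling pairs) cleanly gives $\iota(\sigma_A\rho)=\iota(\sigma_A)+\iota(\rho)$, hence uniqueness of the minimal representative, along with the useful closed form $\iota(x)=\#\{(a,b)\in A^c\times A:\ a>b\}$.
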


\subsection{The q-Ornstein-Uhlenbeck Semigroup and its Markov Dilation}
Let $u_t: H \to H$ be the map $h \mapsto e^{-t} h$ for $t \geq 0$. $u_t$ is clearly a contraction, and so by Theorem \ref{secquant}, we have a trace-preserving, completely positive, unital (\emph{cput}) map $T_t = \Gamma_q(u_t)$. Since $u_s \circ u_t = u_{s + t}$, $T_s \circ T_t = T_{s + t}$ by functorality, and $T_0 = Id$. Therefore, $T_t$ is a cput semigroup. We shall denote by $N$ its (positive) generator, which is called the number operator. 

\begin{defi}
	We say a cput semigroup $T_t$ on a (semi-)finite von Neumann algebra $\mathcal{M}$ admits a \emph{Markov dilation} if there is a larger (semi-)finite von Neumann algebra $\widetilde{\mathcal{M}} $ with increasing filtration $ \widetilde{\mathcal{M}}= \vee_{t \geq 0} \widetilde{\mathcal{M}}_t$  $\widetilde{\mathcal{M}}_t \subset \widetilde{\mathcal{M}}_s$ when $t < s$ together with a sequence of $\ast$-homomorphisms $\varphi_t: \mathcal{M} \to \widetilde{\mathcal{M}}_t$ such that $E_s \circ \varphi_t(x) = \varphi_s \circ T_{t-s}(x)$ for all $t > s$, $x \in \mathcal{M}$, where $E_s$ denotes the conditional expectation onto $\vee_{s \geq t \geq 0} \mathcal{M}_t$.
\end{defi}
In this case, our Markov dilation is special in that $\tilde{\mathcal{M}}_t = \tilde{\mathcal{M}}$ for all $t > 0$ and the $\varphi_t$ are of the form $\alpha_t \circ \pi$ where $\alpha_t$ is an automorphism group of $\widetilde{\mathcal{M}}$, $\pi$ is a canonical inclusion of $\mathcal{M} \subset \widetilde{\mathcal{M}}$. In particular, let $\widetilde{\mathcal{M}} = \Gamma_q(H \oplus H)$ and let $R_t: H \oplus H \to H \oplus H$ be the rotation
\[
R_t =
\left( \begin{array}{cc}
	e^{-t}Id & -\sqrt{1 - e^{-2t}}Id\\
	\sqrt{1 - e^{-2t}}Id & e^{-t}Id
\end{array}\right)
\]
Let $\alpha_t = \Gamma_q(R_t)$. By Theorem \ref{secquant}, this can be extended to a group of *-automorphisms of $\Gamma_q(H \oplus H)$. Let $P_1: H \oplus H \to H$ be the projection onto the first coordinate and $E_1 = \Gamma_q(P_1)$. Then $T_t(x) = E_1 \circ \alpha_t(x)$ and $\Gamma_q(H) \subset \Gamma_q(H \oplus H)$ by including $H$ in the first coordinate. This identity is not difficult to show. 


\begin{defi}
	A von Neumann subalgebra $P \subset \mathcal{M}$ is rigid with respect to a continuous family of completely positive maps $\theta_t:\mathcal{M} \to \mathcal{M}$ if $\theta_t \to Id$ as $t \to 0$ uniformly on the unit ball of $L_{}^{2}(P)$.
\end{defi}

We have the following theorem follows immediately from Theorem 5.4 of \cite{popa06} regarding rigid subalgebras of $\Gamma_q(H)$ when $\dim(H) < \infty$. 
\begin{theorem}
	Let $B \subset \Gamma_q(H)$ be a von Neumann subalgebra. Then TFAE
	\begin{enumerate}
		\item $B$ is rigid with respect to $\alpha_t$.
		\item $B$ is rigid with respect to $T_t$.
		\item $B$ is atomic.
	\end{enumerate}
	\label{rigid}
\end{theorem}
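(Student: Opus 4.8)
The plan is to show the equivalence of the three conditions in a cycle, with the genuinely new content being $(2) \Rightarrow (3)$ (or equivalently $(1) \Rightarrow (3)$), while the reverse implications are essentially bookkeeping. First I would dispose of $(3) \Rightarrow (1)$ and $(3) \Rightarrow (2)$: if $B$ is atomic, write $1 = \sum_i p_i$ as a sum of minimal central projections times minimal projections, so that $L^2(B)$ is a direct sum of finite-dimensional blocks, each spanned by matrix units that are finite linear combinations of bounded elements living in low Fock degree plus tails of small $L^2$-norm; since $\alpha_t \to Id$ and $T_t \to Id$ uniformly on bounded subsets of each fixed-degree subspace of $\mathcal{F}_q(H)$ (here one uses that $\alpha_t = \Gamma_q(R_t)$ with $R_t \to Id$ in operator norm, and similarly for $u_t$), one gets uniform convergence on the unit ball of $L^2(B)$ by an $\varepsilon/3$ argument, splitting the ball into the part supported on finitely many blocks and the small tail. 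Then $(2) \Rightarrow (1)$ is immediate from the identity $T_t = E_1 \circ \alpha_t$ recorded in the excerpt, since $E_1$ is an $L^2$-contraction: rigidity with respect to $\alpha_t$ forces rigidity with respect to $T_t$. Actually both $(1) \Rightarrow (2)$ and $(2) \Rightarrow (1)$ should be extracted directly from Popa's Theorem 5.4 in \cite{popa06}, so the only real work is $(1) \Rightarrow (3)$ or $(2) \Rightarrow (3)$.

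For the hard direction I would invoke Theorem 5.4 of \cite{popa06} as stated: that theorem characterizes the subalgebras $B$ rigid with respect to an $s$-malleable deformation of this type. The point is that $\dim(H) < \infty$ forces $\Gamma_q(H)$ to be a factor (by Ricard's theorem) with a very rigid deformation structure, and Popa's dichotomy says that a rigid subalgebra must, after cutting by a projection, either be amenable/atomic or else produce a ``rigid'' (property (T)-like) piece inside $\Gamma_q(H)$. Since $\Gamma_q(H)$ has the w*CCAP by Theorem A — hence the complete metric approximation property and no rigid diffuse subalgebras (a von Neumann algebra with a weakly compact deformation and the CMAP cannot contain a diffuse subalgebra rigid with respect to that deformation, by the standard Ozawa--Popa averaging argument) — the only possibility left is that $B$ has no diffuse part at all, i.e. $B$ is atomic. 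Concretely: if $B$ had a diffuse summand, restrict to it, and then the rigidity of this diffuse $B_0$ with respect to the $L^2$-compact deformation $\alpha_t$, combined with the CMAP of $\Gamma_q(H)$, yields a contradiction via Popa's compact-deformation-vs-CMAP incompatibility. Hence $B$ is atomic.

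The main obstacle, and the step I would spend the most care on, is making the $(1) \Rightarrow (3)$ argument genuinely a consequence of the already-cited Theorem 5.4 of \cite{popa06} rather than reproving it: I need the deformation $\alpha_t$ on $\widetilde{\mathcal{M}} = \Gamma_q(H \oplus H)$ to literally fit the hypotheses of that theorem (it does: $\alpha_t$ is an $s$-malleable, $L^2$-compact deformation of $\Gamma_q(H)$ because $R_t$ is a norm-continuous path of orthogonal transformations and $R_t \to Id$ in norm, so $\alpha_t \to Id$ $\|\cdot\|_2$-uniformly on $\|\cdot\|$-bounded sets and in particular the deformation is ``compact'' in Popa's sense when $\dim H < \infty$), and I need the conclusion of that theorem to be precisely the trichotomy ``rigid $\Rightarrow$ relatively rigid over an atomic piece, hence atomic given CMAP.'' The finiteness of $\dim(H)$ is used exactly here, because it is what makes the number operator $N$ have compact resolvent on each degree and hence makes the deformation $L^2$-compact; in infinite dimensions this fails and the statement is false. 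So the write-up reduces to: (i) verify the deformation satisfies Popa's hypotheses, (ii) quote Theorem 5.4 to get (1)$\Leftrightarrow$(2) and the reduction of rigidity to atomicity modulo a rigid-subalgebra obstruction, (iii) kill the obstruction using Theorem A, and (iv) give the elementary $\varepsilon/3$ proof that atomic subalgebras are rigid.
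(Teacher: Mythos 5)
Your overall skeleton — prove atomic $\Rightarrow$ rigid by an $\varepsilon/3$ argument over finitely many blocks, relate rigidity for $\alpha_t$ and for $T_t$ by elementary $L^2$ estimates, and concentrate the real content in the implication rigid $\Rightarrow$ atomic — matches the paper's proof. Two of the elementary pieces are also slightly off in detail: the identity $T_t = E_1 \circ \alpha_t$ only gives $\|T_t(x) - x\|_2 = \|E_1(\alpha_t(x) - x)\|_2 \le \|\alpha_t(x) - x\|_2$, i.e.\ rigidity for $\alpha_t$ implies rigidity for $T_t$; for the converse you need the transversality-type computation
\[
\|\alpha_t(x) - x\|_2^2 = 2\bigl(\langle x, x\rangle - \langle x, T_t(x)\rangle\bigr) \le 2\|x\|\,\|T_t(x) - x\|_2,
\]
which is what the paper supplies and which you do not mention. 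And in your $(3)\Rightarrow$ rigid argument you invoke norm convergence $R_t \to \mathrm{Id}$; what you actually need (and what the paper uses) is that $T_t$ restricted to each finite block has norm controlled by the trace of the missing tail together with the conditional expectation $\iota_F$.

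The genuine gap is in your treatment of the hard implication, rigid $\Rightarrow$ atomic. You route it through the w*CCAP of $\Gamma_q(H)$ and a purported ``compact-deformation-vs-CMAP incompatibility'' via ``the standard Ozawa--Popa averaging argument.'' This conflates two unrelated mechanisms. The Ozawa--Popa story (weak compactness of amenable subalgebras in CMAP algebras) plays no role here and yields no such incompatibility; it is used later, in the strong solidity argument, not in this theorem. What the paper actually uses — and what Popa's Theorem 5.4 in \cite{popa06} amounts to in this situation — is a completely elementary compactness argument that requires no approximation property at all: since $\dim H < \infty$, each eigenspace $H^{\otimes n}$ of $N$ is finite-dimensional, so $T_t$ is a compact operator on $L^2(\Gamma_q(H)) \cong \mathcal{F}_q(H)$. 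If $B$ is diffuse, a maximal abelian subalgebra of $B$ contains a Haar unitary $v$ with $\tau(v^m) = 0$, so $\widehat{v^m} \rightharpoonup 0$ weakly in $L^2$; compactness of $T_t$ then forces $\|T_t(\widehat{v^m})\|_2 \to 0$ for each fixed $t > 0$, hence $\|T_t(\widehat{v^m}) - \widehat{v^m}\|_2 \to 1$, contradicting the uniform bound $\|T_t(u) - u\|_2 \le 1/2$ on $\mathcal{U}(B)$ that rigidity provides. You correctly identified that $\dim H < \infty$ is what makes the deformation $L^2$-compact, but you then fail to extract the conclusion directly from compactness, and instead misattribute the contradiction to CMAP and to a ``dichotomy'' in Theorem 5.4 that is not what that result says. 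Dropping the CMAP detour entirely and substituting the Haar-unitary-plus-compactness argument closes the gap.
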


\begin{proof}
3) $\Rightarrow$ 2):
$T_t$ is compact on $L_{}^{2}(\Gamma_q(H))\simeq \mathcal{F}_q(H)$ since $H^{\otimes n}$ is a finite dimensional eigenspace of $T_t$ with eigenvalue $e^{-nt}$. Let $B \subset \Gamma_q(H)$ be a diffuse subalgebra. Suppose $B$ is rigid with respect to $T_t$. Then there exists $t_0$ such that 
\[
\|T_t(u) - u\|_2 \leq \frac{1}{2}
\]
for all $t < t_0$ and $u \in \mathcal{U}(B)$. Let $A \subset B$ be a maximal abelian subalgebra of $B$. $A$ is diffuse since $B$ is diffuse, so there exists a unitary $v \in A$ such that $\tau(v^m) = 0$. The sequence $\hat{v}_m \in L_{}^{2}(\mathcal{M})$ converges weakly to $0$. Since $T_t$ is compact, we get that $\|T_t(\hat{v}_m)\|_2 \to 0$ for any fixed $t$. Therefore
\[
\|T_t(\hat{v}_m) - \hat{v}_m\|_2 = \|\hat{v}_m\|_2 = 1
\]
contradicting that $\|T_t(u) - u\|_2 \leq \frac{1}{2}$ for all $t < t_0$ and $u \in \mathcal{U}(B)$.

2) $\Rightarrow$ 3): Suppose $B$ is Type I. Since $\Gamma_q(H)$ is finite, $B = \oplus_{\alpha \in I} M_{n_\alpha}$ for some countable index set $I$ since $\Gamma_q(H)$ has a separable predual. We have projections $e_{\alpha}$ such that $e_\alpha B e_\alpha = M_{n_\alpha}$ and $\sum_{\alpha \in I} \tau(e_\alpha) = 1$. For any $\varepsilon > 0$, there is a finite set $F \subset I$ such that $\sum_{\alpha \in F} \tau(e_\alpha) > 1-\varepsilon$. Let $\iota_F: B \to L_{}^{2}(B)$ be the map $x \mapsto \sum_{\alpha \in F} e_\alpha x e_\alpha$. A simple estimate shows that $\|\iota_{F^c}\| \leq \sqrt{\varepsilon}$. Therefore 
\[
\lim_{t \to 0} \sup_{\|x\|_{\infty} \leq 1} \|T_t(\iota_F(x))\|_{2} = 0
\]
and so $T_t$ converges uniformly on $(B)_1$.

1) $\Rightarrow$ 2): For any $x \in L_{}^{2}(\Gamma_q(H))$ we have
\begin{align*}
	\|\alpha_t(x) - x \|_2^2 &= 2\langle x, x \rangle - \langle \alpha_t(x), x \rangle - \langle x, \alpha_t(x) \rangle \\
	&= 2(\langle x, x \rangle  - \langle x, T_t(x) \rangle )\\
	&\leq 2\|x\| \|T_t(x) - x\|_2\\
	\end{align*}
	and so if $T_t$ converges uniformly, $\alpha_t$ converges uniformly.
	
2) $\Rightarrow$ 1):
Similarly,
\begin{align*}
	\|T_t(x) - x\|_2 &= \|E_{\Gamma_q(H)} \left(  \alpha_t(x) - x \right)\|_2\\
	&\leq \|\alpha_t(x) - x\|_2
\end{align*}
so if $\alpha_t$ converges uniformly, $T_t$ converges uniformly. 
\end{proof}

\subsection{Central Limit Theorem}
From now on, we shall drop the subscript $q$ and simply assume that $q$ is a fixed parameter between -1 and 1. 
We shall need the following two results. The first is Proposition 2 in \cite{BS1}. 

\begin{theorem}
	Let $h_1, \ldots, h_{2n}$ be vectors in $H$. Then
	\[
	\tau(s(h_1) \ldots s(h_{2n})) = \sum_{\sigma \in P_2(2n)} q^{\iota(\sigma)} \prod_{\{i,j\} \in \sigma} \langle h_i, h_j \rangle 
	\]
	where $P_2(2n)$ denotes the pair partitions of the set $\{1, \ldots, 2n\}$ and $\iota(\sigma)$ denotes the number of crossings of the partition $\sigma$. 
	\label{moment}
\end{theorem}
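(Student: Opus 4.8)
The plan is to prove the identity by expanding each $s(h_i)=l(h_i)+l^{\ast}(h_i)$ and reorganizing the resulting $2^{2n}$-term sum into a sum over pair partitions. First I would dispose of the odd case: if $2n$ is replaced by an odd number, every word obtained from the expansion contains unequally many creation and annihilation operators, so applied to $\Omega$ it lands in a sum of homogeneous tensors none of degree $0$, and pairing against $\Omega$ gives $0$. For the even case, write
\[
\tau(s(h_1)\cdots s(h_{2n})) = \langle\Omega,\, s(h_1)\cdots s(h_{2n})\,\Omega\rangle = \sum_{\varepsilon\in\{+,-\}^{2n}} \big\langle\Omega,\, l^{\varepsilon_1}(h_1)\cdots l^{\varepsilon_{2n}}(h_{2n})\,\Omega\big\rangle,
\]
where $l^{+}=l$ and $l^{-}=l^{\ast}$.

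Each inner term is evaluated by letting the operators act on $\Omega$ from the right: a factor $l^{+}(h_i)$ prepends $h_i$ to the current tensor, while a factor $l^{-}(h_i)$ sends $k_1\otimes\cdots\otimes k_m$ to $\sum_{r=1}^{m} q^{r-1}\langle h_i,k_r\rangle\, k_1\otimes\cdots\widehat{k_r}\cdots\otimes k_m$. Expanding every annihilation operator into this sum over "which slot it removes" turns each term into a further sum; a term survives only when, reading $\varepsilon$ from right to left, every prefix has at least as many $+$'s as $-$'s with equal totals, and when each $-$ removes a slot that is actually present. I would then observe that the data consisting of such an $\varepsilon$ together with a choice of removed slot for every $-$ is precisely a pair partition $\sigma\in P_2(2n)$: given $\sigma$, orient each block $\{i,j\}$ so the smaller index is the annihilator and the larger the creator; this sign pattern is automatically admissible because the excess of $+$'s in a right-to-left prefix equals the number of blocks of $\sigma$ straddling the cut, hence is nonnegative, and the forced removal choices are consistent because at the instant $l^{\ast}(h_i)$ fires its partner creator is still on the stack. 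Under this bijection the corresponding scalar is $q^{e(\sigma)}\prod_{\{i,j\}\in\sigma}\langle h_i,h_j\rangle$, where $e(\sigma)=\sum_{\{i,j\}\in\sigma,\ i<j}(r_{ij}-1)$ and $r_{ij}$ is the tensor position of $h_j$ at the moment $l^{\ast}(h_i)$ removes it.

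The one substantive point is the identity $e(\sigma)=\iota(\sigma)$. Since creation operators prepend, at the instant $l^{\ast}(h_i)$ acts the tensor slots are occupied, in increasing order of index, by exactly the creators $j'$ with $j'>i$ whose partner is $\le i$; hence $r_{ij}-1$ counts the blocks $\{p,j'\}$ of $\sigma$ with $p<i<j'<j$. Summing over all blocks $\{i,j\}$, each crossing pair of blocks — i.e. each interleaved quadruple $a<c<b<d$ with $\{a,b\},\{c,d\}\in\sigma$ — is counted exactly once, namely when $\{i,j\}=\{c,d\}$ and $\{p,j'\}=\{a,b\}$; and conversely every summand records a crossing. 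Thus $e(\sigma)=\iota(\sigma)$, and collecting terms yields $\sum_{\sigma\in P_2(2n)} q^{\iota(\sigma)}\prod_{\{i,j\}\in\sigma}\langle h_i,h_j\rangle$.

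I expect the bijection step — matching sign patterns plus removal choices to pair partitions while tracking the exponents of $q$ — together with the verification $e(\sigma)=\iota(\sigma)$ to be where all the care is needed; everything else is formal manipulation with the definitions of $l(h)$ and $l^{\ast}(h)$. An alternative route, trading the global combinatorics for an induction on $n$, is to commute $l^{\ast}(h_1)$ to the right through $s(h_2)\cdots s(h_{2n})$ using $l^{\ast}(f)l(g)=q\,l(g)l^{\ast}(f)+\langle f,g\rangle$, extract the recursion relating $\tau(s(h_1)\cdots s(h_{2n}))$ to the $\tau$ of shorter products with one vector contracted against $h_1$, and match it with the decomposition of $P_2(2n)$ according to the partner of $1$; the crossing count then enters through the powers of $q$ accumulated while passing intervening creation operators.
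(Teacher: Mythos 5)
The paper does not supply a proof of this theorem; it simply cites it as Proposition~2 of Bo\.{z}ejko--Speicher \cite{BS1} and moves on. Your argument is therefore an independent proof, and it is correct. Expanding $s(h_i)=l(h_i)+l^*(h_i)$, observing that a surviving word together with the removal choices for each annihilator determines exactly one pair partition (and conversely, since putting $-$ at the smaller and $+$ at the larger index of each block automatically satisfies the right-to-left prefix condition), and then tracking the accumulated power of $q$ is precisely the direct computation that underlies the cited result. The one place that requires care is the identity $e(\sigma)=\iota(\sigma)$, and your verification is sound: because creation operators prepend, the stack at the instant $l^*(h_i)$ fires is ordered by increasing index, so the $r_{ij}-1$ slots in front of $h_j$ correspond exactly to the blocks $\{p,j'\}$ with $p<i<j'<j$; summing over blocks $\{i,j\}$, each interleaved quadruple $a<c<b<d$ is picked up once and only once, namely when $\{i,j\}=\{c,d\}$, so the total exponent is the crossing number. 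Your closing remark about the alternative inductive route via the relation $l^*(f)l(g)=q\,l(g)l^*(f)+\langle f,g\rangle$ is also a legitimate path and is worth keeping in mind, since that same relation is what drives several of the Wick-word manipulations later in the paper.
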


\begin{theorem}
	Let $s_j(h) = s(h \otimes_{} e_j)$ for some orthonormal basis $\{e_j\}_j \subset \ell_{N}^{2}(\mathbb{R})$ and $h \in H$ for an arbitrary Hilbert space $H$. Consider the operator
\[
u_N(h) = N^{-\frac{1}{2}} \sum_{j = 1}^N s_j(h).
\]

Then 
\[
\tau(u_N(h_1)\cdots u_N(h_m)) = \tau(s(h_1)\cdots s(h_m)).
\] for all $h_1, \ldots, h_m \in H$. \label{CLT} \end{theorem}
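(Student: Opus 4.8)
The plan is to prove this as a direct moment computation; in fact the stated equality holds exactly for every $N$, so no limiting procedure is needed. The only input is the moment formula of Theorem \ref{moment} (applied in two different $q$-Gaussian algebras) together with the orthonormality of $\{e_j\}$. First I would dispose of the case $m$ odd: the moment formula gives $\tau(s(k_1)\cdots s(k_m))=0$ for any vectors $k_1,\dots,k_m$ in any real Hilbert space when $m$ is odd, and since the left-hand side is, after expanding $u_N$, a linear combination of odd moments of the $q$-Gaussians $s(h_i\otimes e_{j_i})$ in $\Gamma_q(H\otimes\ell^2_N(\mathbb{R}))$, both sides vanish.

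Now assume $m=2n$ is even. Expanding the definition of $u_N$ and using linearity of the trace,
\[
\tau\bigl(u_N(h_1)\cdots u_N(h_{2n})\bigr)=N^{-n}\sum_{j_1,\dots,j_{2n}=1}^{N}\tau\bigl(s(h_1\otimes e_{j_1})\cdots s(h_{2n}\otimes e_{j_{2n}})\bigr).
\]
I would then apply Theorem \ref{moment} inside $\Gamma_q(H\otimes\ell^2_N(\mathbb{R}))$ to each summand. Since $\langle h_i\otimes e_{j_i},\,h_k\otimes e_{j_k}\rangle=\langle h_i,h_k\rangle\,\delta_{j_i j_k}$, and since all the sums are finite and may be interchanged, this yields
\[
\tau\bigl(u_N(h_1)\cdots u_N(h_{2n})\bigr)=N^{-n}\sum_{\sigma\in P_2(2n)}q^{\iota(\sigma)}\Bigl(\prod_{\{i,k\}\in\sigma}\langle h_i,h_k\rangle\Bigr)\Bigl(\sum_{j_1,\dots,j_{2n}=1}^{N}\ \prod_{\{i,k\}\in\sigma}\delta_{j_i j_k}\Bigr).
\]

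The crux is the evaluation of the innermost sum. A pair partition $\sigma$ of $\{1,\dots,2n\}$ has exactly $n$ blocks, and the product of Kronecker deltas is nonzero precisely when $j_i=j_k$ for every block $\{i,k\}$ of $\sigma$; hence the $2n$ indices $j_1,\dots,j_{2n}$ are parametrized by exactly $n$ free values, each ranging over $\{1,\dots,N\}$, so the inner sum equals $N^n$. This cancels the normalization factor $N^{-n}$ and leaves
\[
\tau\bigl(u_N(h_1)\cdots u_N(h_{2n})\bigr)=\sum_{\sigma\in P_2(2n)}q^{\iota(\sigma)}\prod_{\{i,k\}\in\sigma}\langle h_i,h_k\rangle,
\]
which is precisely $\tau(s(h_1)\cdots s(h_{2n}))$ by a second application of Theorem \ref{moment}, this time in $\Gamma_q(H)$.

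There is no genuine obstacle here; the argument is entirely formal once one has the moment formula. The single point worth stating explicitly is that a pair partition on $2n$ points has exactly $n$ blocks — this is exactly what makes $N^{-1/2}$ the correct scaling and upgrades the statement from an asymptotic central limit theorem to an exact identity valid for all $N$.
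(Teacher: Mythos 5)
Your proof is correct, but it takes a genuinely different route from the paper's. The paper observes that $\iota_N(h) = N^{-1/2}\sum_{j=1}^N h\otimes e_j$ defines an \emph{isometric} embedding $H \to H\otimes\ell^2_N(\mathbb{R})$, so the second quantization functor (Theorem~\ref{secquant}) yields a trace-preserving $*$-embedding $\Gamma_q(\iota_N)$ carrying $s(h)$ to $u_N(h)$, and the moment identity is immediate. Your argument instead works from the ground up via the moment formula (Theorem~\ref{moment}): you expand $u_N$, reduce each summand by $\langle h_i\otimes e_{j_i}, h_k\otimes e_{j_k}\rangle = \langle h_i,h_k\rangle\,\delta_{j_ij_k}$, and then count $N^n$ admissible index assignments per pair partition (since a pair partition of $2n$ points has $n$ blocks), which exactly cancels the $N^{-n}$ normalization. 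The paper's proof is more economical and conceptual; yours is more elementary in that it bypasses the functorial machinery entirely and makes transparent exactly why $N^{-1/2}$ is the correct scaling that turns the ``central limit'' heuristic into an exact identity for every $N$. Both are valid; the combinatorial computation you give is essentially what underlies the isometry check $\|\iota_N(h)\| = \|h\|$ once it is pushed through second quantization.
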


\begin{proof}
	Observe that the map $\iota_N : H \to H \otimes \ell_{\mathbb{R}}^{2}(N)$ where
	\[
	\iota_N(h) = N^{-\frac{1}{2}} \sum_{j=1}^N h \otimes e_j
	\]
	is an isometric embedding. We apply Theorem \ref{secquant} and the theorem follows immediately.
\end{proof}

Fix a free ultrafilter $\mathcal{U}$ on the natural numbers. For a sequence of Banach spaces $\{X_n\}$, we may define the ultraproduct $X_\mathcal{U}$ by 
\[
X_\mathcal{U} = \prod\nolimits_\mathcal{U} X_n  := \prod\nolimits_{n} X_n/ I_\mathcal{U}
\]
where
\[
I_\mathcal{U} = \{(x_n) : \lim_{n, \mathcal{U}} \|x_n\|_{X_n} = 0\}.
\]
Define
\[
u_\mathcal{U}(h) = (u_N(h))^\bullet \in \prod\nolimits_{\mathcal{U}} L_{}^{p}(\Gamma_q(\ell_{N}^{2}(\mathbb{R}) \otimes H))
\]
where the notation $(x_N)^\bullet$ is used for the equivalence class of the sequence $(x_N)$ in the von Neumann algebra 
\[
\tilde{\mathcal{N}}_\mathcal{U} := \left( \prod_{N, \mathcal{U}} \overline{L_{}^{1}(\Gamma_q(H \otimes_{} \ell_{N}^{2}(\mathbb{R})))} \right)^\ast. 
\]
However, $\tilde{\mathcal{N}}_\mathcal{U}$ is not in general finite, but it contains a canonical finite subalgebra $\mathcal{N}_\mathcal{U}$ which is obtained as the image of bounded sequences in the Hilbert space $\prod_{\mathcal{U}}L^2\left( \Gamma_q(H \otimes \ell_{N}^{2}(\mathbb{R})) \right)$, obtained by the GNS construction for the trace
\[
\tau_{\mathcal{U}} = \lim_{N, \mathcal{U}}\tau_N,
\]
where $\tau_N$ is the vacuum trace associated to $\Gamma_q(H \otimes_{} \ell_{N}^{2}(\mathbb{R}))$. Thus there is a canonical inclusion
\[
L_{}^{p}(\mathcal{N}_\mathcal{U}) \subseteq \prod\nolimits_\mathcal{U}L_{}^{p}(\Gamma_q(H \otimes \ell_{N}^{2}(\mathbb{R}))).
\]
By Theorem \ref{CLT}, we have an injective *-homomorphism  $\pi_\mathcal{U}: \Gamma_q(H) \to \mathcal{N}_\mathcal{U} $ such that  $\pi_\mathcal{U}(s(h)) =  (u_N(h))^\bullet$. See \cite{pisierOS} Section 9.10 or \cite{raynaud2002} for more information regarding ultraproducts of finite von Neumann algebras. 

Let $u: H \to K$ be any contraction on real Hilbert spaces. Note that 
\[
u \otimes Id_N: H \otimes_{} \ell_{N}^{2}(\mathbb{R}) t\ K \otimes_{} \ell_{N}^{2}(\mathbb{R}) 
\]
is also a contraction. By Theorem \ref{secquant}, there is a completely positive map
\[
\Gamma_q(u \otimes_{} Id_N): \Gamma_q(H \otimes_{} \ell_{N}^{2}(\mathbb{R})) \to \Gamma_q(K \otimes_{} \ell_{N}^{2}(\mathbb{R})).
\]
Therefore we may define a completely positive map 
\[
\Gamma_q^\mathcal{U}(u): \mathcal{N}_\mathcal{U} \to \mathcal{N}_\mathcal{U}.
\]
such that $\Gamma_q^\mathcal{U}(u)\left( (u_N(h))^\bullet \right) = \left( u_N(u(h)) \right)^\bullet$. In particular, if $u$ is an isometry, $\Gamma_q^\mathcal{U}(u)$ is a $\ast$-homomorphism. The following lemma shall be crucial.
\begin{lemma}
	Let $E_\mathcal{U}: \mathcal{N}_\mathcal{U} \to \Gamma_q(H)$ be the unique conditional expectation.  Then	
	\begin{enumerate}
		\item For any contraction $u: H \to K$, $\Gamma_q^\mathcal{U}(u) \circ \pi_\mathcal{U} = \pi_\mathcal{U} \circ \Gamma_q(u)$. 
		\item
		\[
		E_\mathcal{U}\left(N^{-\frac{m}{2}} \sum_{\substack{j_1 \neq \ldots \neq j_m \\ 1 \leq j_k \leq N}} s_{j_1}(h_1) \ldots s_{j_m}(h_m)\right)^\bullet = W(h_1 \otimes_{} \cdots \otimes_{} h_m)
	\]\end{enumerate}
			(Here the indices are taken to be pair-wise not equal. )
	\label{ultrawick}
\end{lemma}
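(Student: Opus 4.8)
The plan is to prove the two assertions separately, with (1) a purely formal consequence of the universal properties already established and (2) the computational heart of the matter.

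For part (1), I would simply trace the definitions. Both sides are completely positive maps $\Gamma_q(H) \to \mathcal{N}_\mathcal{U}$, so it suffices to check they agree on the generators $s(h)$, $h \in H$. By the defining property of $\pi_\mathcal{U}$ we have $\pi_\mathcal{U}(s(h)) = (u_N(h))^\bullet$, and $\Gamma_q(u)(s(h)) = s(u(h))$ by Theorem \ref{secquant}; thus $\pi_\mathcal{U}(\Gamma_q(u)(s(h))) = (u_N(u(h)))^\bullet$. On the other hand $\Gamma_q^\mathcal{U}(u)(\pi_\mathcal{U}(s(h))) = \Gamma_q^\mathcal{U}(u)((u_N(h))^\bullet) = (u_N(u(h)))^\bullet$ by the very definition of $\Gamma_q^\mathcal{U}(u)$. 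Since $*$-homomorphisms (or, here, two normal completely positive maps) that agree on a generating set of a von Neumann algebra agree everywhere, (1) follows. I would note in passing that when $u$ is not an isometry one should check the maps are normal, which they are since each $\Gamma_q(u \otimes Id_N)$ is normal and the ultraproduct construction of $\Gamma_q^\mathcal{U}(u)$ preserves normality.

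For part (2), write $P_N := N^{-m/2}\sum_{j_1 \neq \cdots \neq j_m} s_{j_1}(h_1)\cdots s_{j_m}(h_m) \in \Gamma_q(\ell_N^2(\mathbb R)\otimes H)$. Since $E_\mathcal{U}$ is the conditional expectation onto $\Gamma_q(H) = \pi_\mathcal{U}(\Gamma_q(H))$ and $W(h_1\otimes\cdots\otimes h_m) \in \Gamma_q(H)$, it is enough to show that for every $x \in \Gamma_q(H)$,
\[
\tau_\mathcal{U}\big((P_N)^\bullet\, \pi_\mathcal{U}(x)^\ast\big) = \tau\big(W(h_1\otimes\cdots\otimes h_m)\, x^\ast\big),
\]
and since the linear span of products $s(k_1)\cdots s(k_r)$ is dense in $L^2(\Gamma_q(H))$, it suffices to take $x = s(k_1)\cdots s(k_r)$. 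The right-hand side is then $\langle h_1\otimes\cdots\otimes h_m,\, W(k_1\cdots k_r)\Omega\rangle$ wait — more precisely $\tau(W(\xi)s(k_1)\cdots s(k_r)) = \langle \xi^{\,}, \, \overline{s(k_r)\cdots s(k_1)\Omega}\rangle$ is a fixed number depending only on the Gram data $\langle h_i, k_j\rangle$ and the crossing statistics, by Theorems \ref{wick} and \ref{moment}. The left-hand side unwinds, by definition of $\tau_\mathcal{U}$ and of the embedding $\pi_\mathcal{U}(s(k)) = (u_N(k))^\bullet$, to
\[
\lim_{N,\mathcal{U}} \tau_N\!\Big( N^{-m/2}\!\!\sum_{j_1\neq\cdots\neq j_m}\! s_{j_1}(h_1)\cdots s_{j_m}(h_m)\cdot u_N(k_r)\cdots u_N(k_1)\Big),
\]
where $u_N(k) = N^{-1/2}\sum_i s_i(k)$. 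Expanding the $u_N$'s and applying Theorem \ref{moment} on the big algebra $\Gamma_q(\ell_N^2\otimes H)$, this becomes a normalized sum over pair partitions $\sigma$ of $\{1,\dots,m+r\}$ weighted by $q^{\iota(\sigma)}$, where the inner product of two generators $s_{a}(\cdot)$ and $s_{b}(\cdot)$ vanishes unless their $\ell_N^2$-indices coincide. The combinatorial claim to nail down is: after taking $N\to\infty$, the only partitions surviving the normalization $N^{-m/2-r/2}\cdot(\text{number of surviving index assignments})$ are exactly those in which none of the first $m$ points is paired with another of the first $m$ points — i.e. each of $1,\dots,m$ is matched to one of $m+1,\dots,m+r$ — because the constraint $j_1\neq\cdots\neq j_m$ forbids index-collisions among the first block, and any such forbidden pairing would lose a factor of $N$. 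For those surviving partitions the forced index-matchings contribute Kronecker deltas that collapse the index sums and produce exactly $N^{m/2+r/2}$ choices, cancelling the normalization, and leave precisely the sum $\sum q^{\iota(\rho)}\prod\langle h_i,k_{\rho(i)}\rangle$ over partial matchings $\rho$ of $\{h_1,\dots,h_m\}$ into $\{k_1,\dots,k_r\}$ — which is exactly the expansion of $\langle h_1\otimes\cdots\otimes h_m, \overline{s(k_r)\cdots s(k_1)\Omega}\rangle$, hence of $\tau(W(h_1\otimes\cdots\otimes h_m)s(k_1)\cdots s(k_r))$. (This is the same mechanism by which Theorem \ref{CLT} was proved, now carried out "with one leg inside $\Gamma_q(H)$".)

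The main obstacle is the combinatorial bookkeeping in (2): carefully justifying which pair partitions contribute in the $N\to\infty$ limit, tracking the exact powers of $N$ produced by the free index sums versus the constrained ones, and checking that the $q^{\iota(\sigma)}$ weights restrict correctly to $q^{\iota(\rho)}$ on the surviving matchings (one must argue that crossings among "internal" strands that get contracted do not alter the count, exactly as in the central limit theorem of Bożejko–Speicher). Everything else — the reduction to testing against words $s(k_1)\cdots s(k_r)$, the identification of $\tau(W(\xi)\cdot)$ via Theorems \ref{wick} and \ref{moment}, and the formal part (1) — is routine.
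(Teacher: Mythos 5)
Your argument for part (2) is sound in spirit but takes a genuinely different (and heavier) route than the paper, and your argument for part (1) has a real gap.

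For part (1), you check the identity on the generators $s(h)$ and then invoke the principle that two normal completely positive maps agreeing on a generating set agree everywhere. That principle is false: completely positive maps are not multiplicative, so agreement on $\{s(h):h\in H\}$ does not propagate to agreement on products $s(h_1)s(h_2)\cdots$. (Your parenthetical already signals unease here, but normality is not the obstruction — multiplicativity is.) The correct, and essentially tautological, argument is functoriality at each finite level: writing $\iota_N^H: H \to H\otimes\ell^2_N$ and $\iota_N^K: K \to K\otimes\ell^2_N$ for the embeddings used to define $\pi_\mathcal{U}$, one has $(u\otimes \mathrm{Id}_N)\circ\iota_N^H = \iota_N^K\circ u$ as maps of Hilbert spaces, hence $\Gamma_q(u\otimes \mathrm{Id}_N)\circ\Gamma_q(\iota_N^H) = \Gamma_q(\iota_N^K)\circ\Gamma_q(u)$ by Theorem \ref{secquant}, and the identity passes to the ultraproduct. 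This is what the paper means by ``obvious.''

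For part (2), the paper first observes that $y := E_\mathcal{U}(y_N)^\bullet$ satisfies $T_t(y) = e^{-mt}y$ (using part (1) and the fact that the indices $j_1,\ldots,j_m$ are distinct, so each summand lies in the degree-$m$ eigenspace of the semigroup). This immediately forces $y$ to be orthogonal to all Wick words of degree $\neq m$, so it suffices to test against words $s(\bar{f}_m)\cdots s(\bar{f}_1)$ of length exactly $m$. In that case the only pair partitions of $\{1,\ldots,2m\}$ that survive the $N\to\infty$ limit are perfect cross-matchings between the two blocks, and the count $N^{-m}\prod_{j=0}^{m-1}(N-j) \to 1$ gives $\langle f_1\otimes\cdots\otimes f_m, h_1\otimes\cdots\otimes h_m\rangle$ with no further bookkeeping. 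You instead test against words of arbitrary length $r$ and must also handle self-pairings within the $k$-block, extract the correct crossing weights for those, and match the result to the degree-$m$ component of $s(k_1)\cdots s(k_r)\Omega$. That can be made to work — the distinctness constraint $j_1\neq\cdots\neq j_m$ does forbid pairings internal to the first block, and the power-of-$N$ count balances as you say — but it is substantially more combinatorial work, and as written you suppress the contributions from the self-paired $k$'s (your displayed ``sum over partial matchings $\rho$'' omits the factors $\langle k_a,k_b\rangle$ coming from the internal pairs in the second block). The paper's eigenspace reduction is a clean shortcut worth internalizing: it trades the combinatorics of mixed pair partitions for a one-line spectral observation.
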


\begin{proof}
	1) is obvious. However, note that 1) implies $\pi_\mathcal{U}$. 
For 2), let 
	\[
	y_N = N^{-\frac{m}{2}} \sum_{\substack{j_1 \neq \cdots \neq j_m\\ 1 \leq j_k \leq N}} s_{j_1}(h_1) \cdots s_{j_m}(h_m)
	\]
	Let $y = E_\mathcal{U}(y_N)^\bullet$. Using i) and that the indices $j_k$ are all different, we know that 
	\[
	T_t(y) = E_\mathcal{U}(T_t^\mathcal{U}(y_N)^\bullet) = e^{-mt}E_\mathcal{U}(y_N) = e^{-mt}y
	\]
	Therefore, we have 
\[
	\tau(s(\bar{f}_{m'}) \cdots s(\bar{f}_1)y) = \tau(P_m(s(\bar{f}_{m'}) \cdots s(\bar{f}_1))y)
	\]
	where $P_m: \Gamma_q(H) \to \Gamma_q(H)$ denotes the projection defined by
	\[
	P_m(W(\xi)) = \left\{ \begin{array}{lr}
		W(\xi) & \mbox{if $\xi \in H^{\otimes_{}m}$}\\
		0 & \mbox{otherwise}
	\end{array}\right.
	\] 
	Therefore, we must only check the case where $m' = m$. 
	Using Theorem \ref{moment}, we obtain
	\begin{align*}
		&\tau(s(\bar{f}_{m})\cdots s(\bar{f}_1)y) = \tau_\mathcal{U}(s_\mathcal{U}(\bar{f}_{m})\cdots s_\mathcal{U}(\bar{f}_1)y_\mathcal{U}) \\
		&= \lim_N N^{-m } \sum_{k_1, \ldots, k_m} \sum_{j_1 \neq \cdots \neq j_m} \tau(s_{k_{m}}(\bar{f}_{m}) \cdots s_{k_1}(\bar{f}_1)s_{j_1}(h_1)\cdots s_{j_m}(h_m))\\
		&= \lim_N N^{- m} \sum_{k_1, \ldots, k_{m'}} \sum_{j_1 \neq \cdots \neq j_m} \sum_{\sigma \in P_2(2m)} q^{\iota(\sigma)} \prod_{\{\alpha,\beta \} \in \sigma} \langle \bar{f}_\alpha\otimes e_{k_\alpha}, h_\beta\otimes e_{j_\beta} \rangle
	\end{align*}
	Now simply counting the number of possible indices which make the inner product $\langle \bar{f}_\alpha \otimes e_{k_\alpha}, h_\beta \otimes e_{j_\beta} \rangle $ non-zero, we get
	\begin{align*}
		&\lim_N N^{-m} \sum_{k_1, \ldots, k_m} \sum_{j_1 \neq \cdots j_m} \sum_{\sigma \in P_2(2m)} q^{\iota(\sigma)} \prod_{\{\alpha, \beta\} \in \sigma} \langle \bar{f}_\alpha \otimes e_{k_\alpha}, h_\beta \otimes e_{j_\beta}   \rangle \\
		&= \lim_N N^{-m} \prod_{j = 0}^{m-1} (N-j)  \sum_{\sigma \in P_2(2m)} q^{\iota(\sigma)} \prod_{\{\alpha, \beta\} \in \sigma} \langle \bar{f}_\alpha, h_\beta \rangle \\
		&= \langle f_1 \otimes \cdots \otimes f_m, h_1 \otimes \cdots \otimes h_m \rangle 
	\end{align*}
It is easy to see that 
	\[
	f_1 \otimes \cdots \otimes f_m = P_m(s(f_1) \cdots s(f_m) \Omega)
	\]
	and so we get 
	\[
	\langle W(f_1 \otimes \cdots \otimes f_m), y \rangle = \langle f_1 \otimes \cdots \otimes f_m, h_1 \otimes \cdots \otimes h_m \rangle 
	\]
	Hence $y\Omega = h_1 \otimes \cdots \otimes h_m$ as required.
\end{proof}

\subsection{Bimodules}
Bimodules over von Neumann algebras were first defined and studied by Connes in his unpublished notes \cite{connesunpub} and were use by Connes and Jones in \cite{connesjones85} in order to define property (T) for von Neumann algebras. Specifically, for von Neumann algebras $\mathcal{M}$ and $\mathcal{N}$, an $\mathcal{M}$-$\mathcal{N}$-bimodule is a *-representation of $\mathcal{M} \otimes_{bin} \mathcal{N}^{op}$ on a Hilbert space $\mathcal{H}$. See \cite{effroslance} for the definition of the bin tensor norm. A simple and important example of a bimodule is the \emph{coarse} bimodule $L_{}^{2}(\mathcal{M}) \otimes L_{}^{2}(\mathcal{N})$ where $\mathcal{M}$ acts on the left on $L_{}^{2}(\mathcal{M})$ and $\mathcal{N}$ acts on the right on $L_{}^{2}(\mathcal{N})$. Just as for group representations, Connes and Jones gave the following definition of weak containment for these bimodules.

\begin{defi}
	Let $\mathcal{H}$ and $\mathcal{K}$ be $\mathcal{M}$-$\mathcal{N}$-bimodules. $\mathcal{H}$ is \emph{weakly contained} in $\mathcal{K}$, denoted by $\mathcal{H} \prec \mathcal{K}$, if for all $\varepsilon$, $\xi \in \mathcal{H}$, $F \subset \mathcal{M}$ finite, and $E \subset \mathcal{N}$ finite, there exists $\eta_1, \ldots, \eta_n \in \mathcal{K}$ such that
	\[
	|\langle \xi, x\xi y \rangle - \sum_{j=1}^n \langle \eta_j, x \eta_j y \rangle | < \varepsilon	
	\]
	for all $x \in F$ and $y \in E$.
\end{defi}

For two C*-algebras $\mathcal{A}$ and $\mathcal{B}$, denote the state space of the algebraic tensor product $\mathcal{A} \odot \mathcal{B}$ by $S(\mathcal{A} \odot \mathcal{A}) = S(\mathcal{A} \otimes_{max} \mathcal{B})$. In \cite{effroslance}, Effros and Lance show that if $\mathcal{M}$ and $\mathcal{N}$ are von Neumann algebras, $f \in S(\mathcal{M} \odot \mathcal{N})$ is such that $(x,y) \mapsto f(x \otimes y)$ is weak* continuous in each variable if and only if the maps
	\[
	T_f(x)(y) := f(x \otimes y)
	\]
	defines completely positive map $T_f: \mathcal{M} \to \mathcal{N}_\ast$ and $T_f^\ast: \mathcal{N} \to \mathcal{M}_\ast$ defines a completely positive map. We may define an element of $\varphi_\xi \in S(\mathcal{M} \odot \mathcal{N})$ from $\xi \in \mathcal{H}$ such that $\|\xi\|_{\mathcal{H}} = 1$ simply by $\varphi_\xi(x \otimes y) = \langle \xi, x\xi y \rangle$. These definitions give the following proposition.
	\begin{lemma}
		\label{weakcontHS}
		Let $\mathcal{M}$ and $\mathcal{N}$ be finite von Neumann algebras with separable predual and $\mathcal{H}$ be an $\mathcal{M}$-$\mathcal{N}$-bimodule. Then for the following, \emph{(2)} and \emph{(3)} are equivalent and \emph{(1)} implies \emph{(2)} and \emph{(3)}.
		\begin{enumerate}
			\item $T_\xi := T_{\varphi_\xi}$ extends to an element of $S_2(L_{}^{2}(\mathcal{M}), L_{}^{2}(\mathcal{N}))$.
			\item For $\xi \in \mathcal{H}$ such that $\|\xi\|_{\mathcal{H}} = 1$, $\varphi_\xi \in S(\mathcal{M} \odot \mathcal{N})$ is continuous with respect to the minimal tensor norm.
			\item $\mathcal{H} \prec L_{}^{2}(\mathcal{M}) \otimes L_{}^{2}(\mathcal{N})$.
		\end{enumerate}
			\end{lemma}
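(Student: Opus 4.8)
\emph{Approach.} I will treat $(2)\Leftrightarrow(3)$ as the core and then deduce $(1)\Rightarrow(3)$ (hence $(1)\Rightarrow(2)$). Throughout I view an $\mathcal{M}$-$\mathcal{N}$-bimodule $\mathcal{H}$ as the associated $\ast$-representation $\pi_{\mathcal{H}}$ of $\mathcal{M}\otimes_{bin}\mathcal{N}^{op}$, and I note that the weak containment $\mathcal{H}\prec\mathcal{K}$ in the sense defined above is exactly Fell's weak containment of $\pi_{\mathcal{H}}$ in $\pi_{\mathcal{K}}$: the estimate $|\langle\xi,x\xi y\rangle-\sum_j\langle\eta_j,x\eta_j y\rangle|<\varepsilon$ for $x\in F,\,y\in E$ says precisely that $\varphi_\xi$ lies in the weak* closure, inside $(\mathcal{M}\odot\mathcal{N})^\ast$, of the finite sums $\sum_j\varphi_{\eta_j}$ with $\eta_j\in\mathcal{K}$. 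The one structural fact I need about the coarse bimodule $\mathcal{K}=L^2(\mathcal{M})\otimes L^2(\mathcal{N})$ is that its representation $\pi_{\mathcal{K}}\colon x\otimes y\mapsto L_x\otimes R_y$ (with $L,R$ the standard left and right actions on $L^2(\mathcal{M})$, $L^2(\mathcal{N})$) factors through the minimal tensor product and is \emph{faithful} there: indeed $L\colon\mathcal{M}\to B(L^2(\mathcal{M}))$ and $R\colon\mathcal{N}^{op}\to B(L^2(\mathcal{N}))$ are isometric normal $\ast$-representations acting on separate tensor legs, so $\pi_{\mathcal{K}}$ is isometric for $\|\cdot\|_{min}$, i.e.\ $\ker\pi_{\mathcal{K}}$ is exactly the kernel of the canonical quotient $\mathcal{M}\otimes_{bin}\mathcal{N}^{op}\twoheadrightarrow\mathcal{M}\otimes_{min}\mathcal{N}^{op}$.

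\emph{Proof of $(2)\Leftrightarrow(3)$.} Assume $(3)$ and fix a unit vector $\xi\in\mathcal{H}$. Indexing by $(\varepsilon,F,E)$, the definition of $\mathcal{H}\prec\mathcal{K}$ produces a net $\psi_\alpha\to\varphi_\xi$ weak* on $\mathcal{M}\odot\mathcal{N}$, each $\psi_\alpha$ a finite sum of functionals $\varphi_\eta$ with $\eta\in\mathcal{K}$; since $\pi_{\mathcal{K}}$ is $\|\cdot\|_{min}$-bounded, each $\psi_\alpha$ extends to a positive functional on $\mathcal{M}\otimes_{min}\mathcal{N}^{op}$ of norm $\psi_\alpha(1\otimes1)\to1$, so $|\varphi_\xi(z)|=\lim_\alpha|\psi_\alpha(z)|\le 2\|z\|_{min}$ for all $z\in\mathcal{M}\odot\mathcal{N}$; thus $\varphi_\xi$ is $\|\cdot\|_{min}$-continuous, which is $(2)$. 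Conversely, assume $(2)$ and fix a unit vector $\xi$. Then $\varphi_\xi$ extends to a state $\widetilde{\varphi}_\xi$ of $\mathcal{M}\otimes_{min}\mathcal{N}^{op}$; pushing $\widetilde{\varphi}_\xi$ forward through the faithful representation $\pi_{\mathcal{K}}$, extending it to a state of $B(L^2(\mathcal{M})\otimes L^2(\mathcal{N}))$, and approximating weak* by truncations of normal states of $B(L^2(\mathcal{M})\otimes L^2(\mathcal{N}))$, then restricting back to $\pi_{\mathcal{K}}(\mathcal{M}\odot\mathcal{N})$, exhibits $\varphi_\xi$ as a weak* limit of finite sums $\sum_j\varphi_{\eta_j}$ with $\eta_j\in\mathcal{K}$. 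Since both $(2)$ and $(3)$ are tested on unit vectors and $\varphi_{c\xi}=|c|^2\varphi_\xi$, this gives $\mathcal{H}\prec\mathcal{K}$, i.e.\ $(3)$.

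\emph{Proof of $(1)\Rightarrow(3)$.} Here I use the standard identification of the coarse bimodule with Hilbert--Schmidt operators, $L^2(\mathcal{M})\otimes L^2(\mathcal{N})\cong S_2(L^2(\mathcal{M}),L^2(\mathcal{N}))$ as $\mathcal{M}$-$\mathcal{N}$-bimodules (up to the modular conjugation $J$, i.e.\ a transpose), under which an operator $S\in S_2(L^2(\mathcal{M}),L^2(\mathcal{N}))$ carries the functional $\varphi_S(x\otimes y)=\mathrm{Tr}\big(S^\ast\,(x\cdot S\cdot y)\big)$, where $x\cdot S\cdot y$ is the amplification of $S$ by the standard left and right actions. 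Given a unit vector $\xi\in\mathcal{H}$ for which $T_\xi$ extends to an element of $S_2(L^2(\mathcal{M}),L^2(\mathcal{N}))$, one unwinds the definitions $\varphi_\xi(x\otimes y)=\langle\xi,x\xi y\rangle=T_\xi(x)(y)$ and the Hilbert--Schmidt trace pairing to see that, under this identification, $T_\xi$ is an element of $\mathcal{K}$ with $\varphi_{T_\xi}=\varphi_\xi$. Thus $\varphi_\xi$ is literally the functional of a single vector of $\mathcal{K}$, so $\mathcal{H}\prec\mathcal{K}$ is witnessed with $\eta_1=T_\xi$ and exact equality (no limit needed); $(2)$ then follows from $(2)\Leftrightarrow(3)$.

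\emph{Main obstacle.} The real work is in the two identifications: (i) making the dictionary ``$\mathcal{M}$-$\mathcal{N}$-bimodule $=$ representation of $\mathcal{M}\otimes_{bin}\mathcal{N}^{op}$, weak containment $=$ Fell weak containment'' precise enough to legitimately identify $\ker\pi_{\mathcal{K}}$ with the kernel of the minimal quotient; and (ii) in $(1)\Rightarrow(3)$, tracking the modular conjugation carefully so that $\varphi_{T_\xi}=\varphi_\xi$ holds on the nose, rather than after replacing $T_\xi$ by its adjoint or transpose. Once these are in place, the minimal-norm/faithfulness arguments and the reduction to unit vectors are routine.
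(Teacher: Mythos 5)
Your $(2)\Leftrightarrow(3)$ argument is essentially the same as the paper's: both use the faithfulness of the coarse representation $\pi_{\mathcal{K}}$ on $\mathcal{M}\otimes_{\min}\mathcal{N}^{op}$ and the weak* density of (truncated) normal functionals on $B(L^2(\mathcal{M})\otimes L^2(\mathcal{N}))$, and both correctly identify weak containment with Fell's notion via approximation by finite sums of vector functionals. Your sketch of $(2)\Rightarrow(3)$ ("extend to a state of $B(H)$ and approximate by truncations of normal states") is informal but lands on the same argument the paper attributes to Effros--Lance.

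However, your $(1)\Rightarrow(3)$ contains a genuine gap. The claimed identity $\varphi_{T_\xi}=\varphi_\xi$ cannot hold: $\varphi_\xi$ is quadratic in $\xi$, while $T_\xi$ is also quadratic in $\xi$ and hence $\varphi_{T_\xi}(x\otimes y)=\operatorname{Tr}(T_\xi^\ast(x\cdot T_\xi\cdot y))$ is \emph{quartic} in $\xi$. Restricting to unit vectors does not repair this, since there is no reason for $\|T_\xi\|_{S_2}$ to equal $1$; indeed $\varphi_{T_\xi}(1\otimes 1)=\|T_\xi\|_{S_2}^2$, which can easily differ from $\varphi_\xi(1\otimes 1)=\|\xi\|^2=1$. (Take $\mathcal{M}=\mathcal{N}=\mathbb{C}^2$ with normalized trace, $\mathcal{H}=L^2(\mathcal{M})\otimes L^2(\mathcal{N})$, $\xi=2\,e_1\otimes e_1$: then $T_\xi$ is the rank-one map $(v_1,v_2)\mapsto(2v_1,0)$ on $L^2$, with $\|T_\xi\|_{S_2}=2$.) The issue is not merely the transpose/modular conjugation you flagged; the degrees of homogeneity do not match, so the proposed witness is the wrong object. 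The paper avoids this by treating $T_\xi$ \emph{linearly}: it is identified with a density $\zeta\in L^2(\mathcal{M}\bar{\otimes}\mathcal{N}^{op})$ via the duality pairing $\varphi_\xi(z)=(\tau_\mathcal{M}\otimes\tau_\mathcal{N})(\pi(z)\zeta)$ (so $\varphi_\xi$ is linear in $\zeta$, not quadratic), then notes $\zeta\in L^1$ since the algebras are finite, and uses Kaplansky density to embed $L^1(\mathcal{M}\bar{\otimes}\mathcal{N}^{op})$ isometrically in $(\mathcal{M}\otimes_{\min}\mathcal{N}^{op})^\ast$, yielding $(2)$. A repair of your direct route to $(3)$ would be: $\zeta\in L^1_+$ means $\varphi_\xi$ is a \emph{normal} positive functional on $\mathcal{M}\bar{\otimes}\mathcal{N}^{op}$, hence implemented by the vector $\widehat{\zeta^{1/2}}\in L^2(\mathcal{M}\bar{\otimes}\mathcal{N}^{op})\cong\mathcal{K}$; it is that square root, not $T_\xi$ itself, that witnesses the weak containment.
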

	\begin{proof}
		Note that $T_\xi(y)(x) = \varphi_\xi(x \otimes y) = \langle \xi, x\xi y \rangle$. Also note that the coarse bimodule $L_{}^{2}(\mathcal{M}) \otimes L_{}^{2}(\mathcal{N})$ is isomorphic to $S_2(L_{}^{2}(\mathcal{M}), L_{}^{2}(\mathcal{N}))$ by identifying simple tensors $\xi \otimes \eta \in L_{}^{2}(\mathcal{M}) \otimes L_{}^{2}(\mathcal{N})$ with rank one operators. The bimodule structure comes from pre-composing with operators from $\mathcal{M}$ and composing with operators from $\mathcal{N}$. Therefore we have
	\[
	L_{}^{2}(\mathcal{M}) \otimes L_{}^{2}(\mathcal{N}) \simeq L_{}^{2}(\mathcal{M} \bar{\otimes} \mathcal{N}^{op}) \simeq S_2(L_{}^{2}(\mathcal{M}), L_{}^{2}(\mathcal{N})).
	\]
	Let 
	\[
	\pi: \mathcal{M} \otimes_{bin} \mathcal{N} \to B(L_{}^{2}(\mathcal{M}) \otimes_{} L_{}^{2}(\mathcal{N})
	\]
	denote the representation described above which defines the bimodule structure.

(1) $\Rightarrow$ (2): 	Using these identifications, $T_\xi$ corresponds to $\zeta \in L_{}^{2}(\mathcal{M} \bar{\otimes} \mathcal{N}^{op})$ by 
	\[
	\tau_\mathcal{N}(T_\xi(x)y) = \tau_\mathcal{M} \otimes \tau_\mathcal{N}(\pi(x \otimes y) \zeta).
	\] 
	Since $\mathcal{M}$ and $\mathcal{N}$ are finite, we have that $\zeta \in L_{}^{1}(\mathcal{M} \bar{\otimes} \mathcal{N}^{op})$. Using the Kaplansky density theorem,  $L_{}^{1}(\mathcal{M} \bar{\otimes} \mathcal{N})$ embeds isometrically in $(\mathcal{M} \otimes_{min} \mathcal{N})^\ast$ since $\mathcal{M} \otimes_{min} \mathcal{N}$ is weak* dense in $\mathcal{M} \bar{\otimes} \mathcal{N}$. Therefore we have that $\|\varphi_\xi\|_{min^\ast} \leq \|T_\xi\|_{HS}$.

		(3) $\Rightarrow$ (2): If $\mathcal{H} \prec L_{}^{2}(\mathcal{M}) \otimes L_{}^{2}(\mathcal{N})$, using the definition, we have elements $\eta_1, \ldots, \eta_n \in L_{}^{2}(\mathcal{M}) \odot L_{}^{2}(\mathcal{N})$ such that 
		\[
		|\varphi_\xi(x \otimes y) - \sum_{j = 1}^n \varphi_{\eta_j}(x \otimes y)| < \varepsilon
		\]
		for all $x \in E$ finite, $y \in F$ finite, $\varepsilon> 0$. Since $\eta_j \in L_{}^{2}(\mathcal{M}) \odot L_{}^{2}(\mathcal{N})$, 
		\[
		\varphi_{\eta_j}(x \otimes y) = \langle \eta_j, \pi(x \otimes y) \eta_j  \rangle.
		\]
		Hence $\varphi_{\eta_j} \in B(L_{}^{2}(\mathcal{M}) \otimes L_{}^{2}(\mathcal{N})_\ast$. Therefore $\varphi_\xi$ is a cluster point of elements of the form $\sum_{j = 1}^n \varphi_{\eta_j}$. According to \cite{effroslance}, this implies that $\varphi_\xi$ is min-continuous.
					
		(2) $\Rightarrow$ (3): Suppose $\varphi_\xi$ is continuous with respect to the min-norm. We observe $\varphi_\xi \in (\mathcal{M} \otimes_{min} \mathcal{N})^\ast$ if it lifts to $B(L_{}^{2}(\mathcal{M}) \otimes L_{}^{2}(\mathcal{N}))^\ast$. Therefore, by \cite{effroslance}, we may write $\varphi_\xi$ as the limit of elements $\varphi_\eta \in B(L_{}^{2}(\mathcal{M}) \otimes L_{}^{2}(\mathcal{N}))_\ast$. This implies directly that $\mathcal{H} \prec L_{}^{2}(\mathcal{M}) \otimes L_{}^{2}(\mathcal{N})$. 
			\end{proof}

\section{Proof of the CCAP}
In this section, we shall prove that $\Gamma_q(H)$ has the w*CCAP. This is a crucial property to proving strong solidity given the result of Ozawa and Popa (\cite{OP1} Theorem 3.5) that every amenable subalgebra of a von Neumann algebra with the w*CCAP is weakly compact. This result is made significantly easier by using Theorem \ref{secquant} and Theorem 1 from \cite{nou}.
We begin by recalling Nou's result from Theorem 1 of \cite{nou}.

\begin{theorem}
	Let $K$ be a complex Hilbert space. Then for all $n \geq 0$ and for all $\xi \in B(K) \otimes_{\min} H^{\otimes n}$ we have
\begin{align*}
\max_{0 \leq k \leq n} \|(Id \otimes R^\ast_{n, k}(\xi)\| &\leq \|(Id \otimes W)(\xi)\|_{\min}\\
&\leq C_q(n+1)\max_{0 \leq k \leq n} \|(Id \otimes R^\ast_{n, k})(\xi)\|
\end{align*}
\end{theorem}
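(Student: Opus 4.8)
The plan is to run everything off the Wick expansion $W(\xi)=\sum_{k=0}^{n}U_{n,k}R^{*}_{n,k}(\xi)$ of Theorem~\ref{wick}; after tensoring with $B(K)$ this reads $(Id\otimes W)(\xi)=\sum_{k=0}^{n}(Id\otimes U_{n,k})(Id\otimes R^{*}_{n,k})(\xi)$. I would reduce the whole statement to two claims. \textbf{(A)} Each $U_{n,k}$ is completely bounded with $\|U_{n,k}\|_{cb}\le C_{q}$, where $C_{q}$ depends only on $q$ and \emph{not} on $n$ or $k$. \textbf{(B)} For each $k$, the map $U_{n,k}$ restricts to a complete isometry on the ``diagonal block'' of $B(\mathcal{F}_{q}(H))$ consisting of operators that carry the $k$-particle space $H^{\otimes k}$ into the $(n-k)$-particle space $H^{\otimes n-k}$; here the block is identified with $H_{c}^{\otimes n-k}\otimes_{h}\bar{H}_{r}^{\otimes k}$ through Remark~\ref{hilbertop}(2), which is precisely the operator space in which $\|(Id\otimes R^{*}_{n,k})(\xi)\|$ is measured.

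Granting these, the right-hand inequality is the triangle inequality applied to the $n+1$ summands: $\|(Id\otimes W)(\xi)\|_{\min}\le\sum_{k=0}^{n}\|U_{n,k}\|_{cb}\,\|(Id\otimes R^{*}_{n,k})(\xi)\|\le C_{q}(n+1)\max_{0\le k\le n}\|(Id\otimes R^{*}_{n,k})(\xi)\|$. For the left-hand inequality I would use compression by the projections $P_{m}$ of $\mathcal{F}_{q}(H)$ onto $H^{\otimes m}$. The $j$-th Wick summand is $j$ annihilations followed by $n-j$ creations, so it carries $H^{\otimes m}$ into $H^{\otimes m+n-2j}$ and kills $H^{\otimes m}$ when $m<j$; hence only the term $j=k$ survives the compression $(Id\otimes P_{n-k})(Id\otimes W)(\xi)(Id\otimes P_{k})$, and that term already maps $H^{\otimes k}$ into $H^{\otimes n-k}$, so this compression equals $(Id\otimes U_{n,k})(Id\otimes R^{*}_{n,k})(\xi)$ restricted to the block of (B). By (B) its norm is exactly $\|(Id\otimes R^{*}_{n,k})(\xi)\|$, and compression by projections never increases the operator norm, so $\|(Id\otimes W)(\xi)\|_{\min}\ge\|(Id\otimes R^{*}_{n,k})(\xi)\|$; maximizing over $k$ finishes.

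Claim (B) I would check by a direct computation on simple tensors: $l_{q}^{*}(\bar{h}_{n-k+1})\cdots l_{q}^{*}(\bar{h}_{n})$ sends $\eta\in H^{\otimes k}$ to a scalar multiple of $\Omega$, namely a $q$-deformed inner product of $\eta$ against the reversed tensor $h_{n}\otimes\cdots\otimes h_{n-k+1}$, after which $l_{q}(h_{1})\cdots l_{q}(h_{n-k})$ re-emits $h_{1}\otimes\cdots\otimes h_{n-k}$. Under the identification of the block with $H_{c}^{\otimes n-k}\otimes_{h}\bar{H}_{r}^{\otimes k}\simeq\mathcal{K}(H^{\otimes k},H^{\otimes n-k})$ (Remark~\ref{hilbertop}(2), with the inner products each $H^{\otimes j}$ inherits from $\mathcal{F}_{q}(H)$), this presents $U_{n,k}$ on that block as ``precompose with the unitary reversing the last $k$ tensor legs'', which is manifestly a complete isometry, so no constant is lost on the lower side.

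Claim (A) is the substance of the theorem and, I expect, the main obstacle. The strategy is to factor $U_{n,k}$ through a multiplication map rather than expand it as a product of $n$ single creations/annihilations (which would only yield the useless bound $C_{q}^{n}$). Write $\lambda_{m}\colon H_{c}^{\otimes m}\to B(\mathcal{F}_{q}(H))$, $\zeta\mapsto(\eta\mapsto\zeta\otimes\eta)$, for iterated left creation and $\lambda^{*}_{m}\colon\bar{H}_{r}^{\otimes m}\to B(\mathcal{F}_{q}(H))$ for its adjoint map (iterated left annihilation); then $U_{n,k}=\mu\circ(\lambda_{n-k}\otimes_{h}\lambda^{*}_{k})$, where $\mu\colon B(\mathcal{F}_{q}(H))\otimes_{h}B(\mathcal{F}_{q}(H))\to B(\mathcal{F}_{q}(H))$ is the multiplication map, which is completely contractive (a standard property of the Haagerup tensor product, cf. Lemma~\ref{mult}). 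By bifunctoriality of $\otimes_{h}$ on completely bounded maps, $\|U_{n,k}\|_{cb}\le\|\lambda_{n-k}\|_{cb}\,\|\lambda^{*}_{k}\|_{cb}$, and $\|\lambda^{*}_{m}\|_{cb}=\|\lambda_{m}\|_{cb}$, so the whole problem collapses to the single estimate $\sup_{m\ge 0}\|\lambda_{m}\|_{cb}<\infty$. Spelled out, this says that creation by a fixed Fock vector $\zeta\in H^{\otimes m}$ is bounded on $\mathcal{F}_{q}(H)$ by $c_{q}^{-1}\|\zeta\|$ with $c_{q}>0$ independent of $m$, together with the matricial version. This uniform boundedness is exactly the hard analytic input, going back to Bo\.{z}ejko; the mechanism behind it is the Bo\.{z}ejko--Speicher strict lower bound $P^{(q)}_{n}\ge\big(\prod_{j\ge1}(1-|q|^{j})\big)\,Id$ for the $q$-Gram operators \cite{BS1}, which controls how far the $q$-norm on $H^{\otimes n}$ can drop below the flat norm, combined with the recursive (Gaussian-binomial) structure of the $q$-inner products. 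Once this is available one obtains an explicit $C_{q}$ depending only on $\prod_{j\ge1}(1-|q|^{j})$, and the theorem follows.
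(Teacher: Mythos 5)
First, a point of framing: the paper does not prove this theorem. It is cited directly from \cite{nou} (``We begin by recalling Nou's result from Theorem 1 of \cite{nou}''), so there is no internal proof to compare against. Your strategy is nevertheless close in spirit to what Nou actually does for the \emph{upper} bound: the factorization of $U_{n,k}$ through a Haagerup-tensor multiplication map and the reduction to uniform complete boundedness of iterated creation $\lambda_m$ is indeed the mechanism, and the right inequality then follows by the triangle inequality exactly as you say. One caveat on Claim (A): quoting the Bo\.{z}ejko--Speicher \emph{lower} bound $P^{(q)}_{n} \geq \prod_{j\geq 1}(1-|q|^{j})\,Id$ alone is not enough to produce a uniform cb-bound on $\lambda_m$; the uniformity in $m$ requires a multiplicative comparison of $P^{(q)}_{m+j}$ against $P^{(q)}_{m}\otimes P^{(q)}_{j}$ (also due to Bo\.{z}ejko), since $\|\lambda_m\|_{cb}$ is controlled by the row sum $\|\sum_\alpha \lambda_m(e_\alpha)^\ast\lambda_m(e_\alpha)\|$ which mixes inner products on tensor powers of different length. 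As written, Claim (A) points to the wrong input.

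The substantive gap is in Claim (B) and hence in the left inequality. You identify the $(n-k,k)$-block of $B(\mathcal{F}_q(H))$ with $H_c^{\otimes n-k}\otimes_h\bar{H}_r^{\otimes k}$ ``with the inner products each $H^{\otimes j}$ inherits from $\mathcal{F}_q(H)$,'' and assert the restriction of $U_{n,k}$ is a complete isometry. But by Remark~\ref{hilbertop}(1), the iterated Haagerup tensor $H_c^{\otimes m}$ is $(H^{\otimes_2 m})_c$ with the \emph{flat} (undeformed) tensor-product inner product, so the operator space in which $\|(Id\otimes R^\ast_{n,k})(\xi)\|$ is measured is $\mathcal{K}(H^{\otimes k}_{\mathrm{flat}}, H^{\otimes n-k}_{\mathrm{flat}})$; whereas the compressed block lives in $\mathcal{K}(H^{\otimes k}_{q}, H^{\otimes n-k}_{q})$ with the $q$-deformed inner products inherited from $\mathcal{F}_q(H)$. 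These norms are \emph{not} uniformly comparable as $n$ grows: while $P^{(q)}_m\geq c_q\,Id$, the top eigenvalue of $P^{(q)}_m$ is at least the $q$-factorial $[m]_{|q|}!\sim C(1-|q|)^{-m}$ and diverges exponentially, so $\|\zeta\|_q/\|\zeta\|_{\mathrm{flat}}$ is unbounded on symmetric (resp.\ antisymmetric) tensors. Thus the restriction of $U_{n,k}$ to the block is not a complete isometry, and your parenthetical re-definition of the inner products is inconsistent with the Haagerup tensor product structure; if you do change the operator space structure on $H_c^{\otimes m}$ to the $q$-deformed one, you are then obliged to re-derive Claim (A) in that new structure, which you do not do. A single compression therefore only yields a lower bound on $\|W(\xi)\|$ in terms of the $q$-deformed block norm, not the flat one, and the passage from one to the other is where the real content of Nou's lower inequality lives.
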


Let $X_n$ denote the $\ell^\infty$ direct sum of the spaces 
\[
H_c^{n-k} \otimes_{h} \bar{H}_r^{k}
\]
for $k$ ranging from 0 to $n$. This theorem means the map  $\Phi_n: H^{\otimes_{}n} \to \Gamma_q(H) $ defined by $\Phi_n(\xi) = W(\xi)$ has cb-norm less than $C_q(n+1)$.  Here the operator space structure on $H^{\otimes_{}n}$ is realized by demanding 
\[
R_{n,k}^\ast: H^{\otimes_{}n} \to X_n 
\]
have cb-norm $C_q = \prod_{n=1}^\infty (1 - q^n)^{-1}$. From here, we shall denote this operator space by $H_{Nou}^{\otimes_{}n}$. In other words, the following diagram commutes
\begin{equation*}
	\begin{tikzcd}
		H^{\otimes_{}n}_{Nou} \drar{\Phi_n} \dar[swap]{(R^\ast_{n,k})} \\ 
		 X_n \rar[swap]{(U_{n,k})} & \Gamma_q(H)
	\end{tikzcd}
\end{equation*}
Nou proves that $\|U_{n,k}\|_{cb} \leq C_q$ and so $\|\Phi_n\|_{cb} \leq C_q(n+1)$. Recall that $\overline{L_{}^{1}(\mathcal{M})}^\ast$ is completely isometric to $\mathcal{M}$ for a finite von Neumann algebra where the duality is with respect to the trace, i.e.
\[
\langle x, y \rangle = \tau(x^\ast y)
\]
which is consistent with the multiplication map from Lemma \ref{mult}. Let $P_n:\Gamma_q(H) \to \Gamma_q(H)$ be the projection defined by
\[
P_n(W(\xi)) = \left\{\begin{array}{lr}
	W(\xi) & \mbox{if $\xi \in H^{\otimes n}$}\\
	0 & \mbox{otherwise}
\end{array}\right.
\]
Following the now-standard argument in \cite{haagerup79}, our goal shall be to show that $\|P_n\|_{cb} < n^k$ for some natural number $k$. Once we have established this fact, we see that 
\[
\|x - \sum_{n=0}^N P_n T_t x\| \leq \|x - T_t(x)\|_{} + \sum_{n=0}^N e^{-nt} \|P_n(x)\|.
\] 
We may make the value on the right hand side as small as we would like by adjusting $N$ and $t$ thus there is a net $\varphi_\alpha = \sum_{n=0}^{N_\alpha} T_{t_\alpha}P_n$ which converges in the point-ultraweak topology to the identity. Also for any $\varepsilon >0$, we may choose $N_\alpha$ and $t_\alpha$ such that
\begin{align*}
	\|\varphi_\alpha\|_{cb} &\leq \|T_{t_\alpha}\|_{cb} + \sum_{n=N_\alpha+1}^{\infty} e^{-nt_\alpha}\|P_n\|_{cb} \\
	&\leq 1 + \sum_{n = N_\alpha + 1}^{\infty}e^{-nt_\alpha} n^k < 1+\varepsilon
\end{align*}
Let us define a map 
\[
\Psi_n: (H^{\otimes_{}n}_{Nou})^\ast \to \overline{L^1(\Gamma_q(H))}
\]
such that $\Psi_n(\xi) = W(\xi)$. Note that $(X_n)^\ast$ is the $\ell_1$ direct sum of the spaces
\[
\bar{H}_r^{\otimes_{}n-k} \otimes_{h} H_c^{\otimes k}
\]
as $k$ ranges from 0 to $n$. Similarly to \cite{nou}, we define the map 
\[
\overline{\Psi}_n: (X_n)^\ast \to \overline{L_{}^{1}(\Gamma_q(H))}
\]
such that
\[
\overline{\Psi}_n(h_1 \otimes_{} \cdots \otimes_{} h_{n-k} \otimes_{h} h_{n-k+1} \otimes_{} \cdots \otimes_{} h_n)_k = W(h_1 \otimes_{} \cdots \otimes_{} h_n).
\]
 Thus, since 
\[
R_{n,k}: \bar{H}_r^{\otimes_{}n-k} \otimes_{h} H_c^{\otimes_{}k} \to (H^{\otimes_{}n}_{Nou})^\ast 
\]
is defined by
\[
R_{n,k}(h_1 \otimes_{} \cdots \otimes_{} h_{n-k} \otimes_{h} h_{n-k+1} \otimes_{} \cdots \otimes_{} h_n) = h_1 \otimes_{} \cdots \otimes_{} h_n,
\]
the following diagram commutes.
\[
\begin{tikzcd}
	(X_n)^\ast \dar[swap]{(R_{n,k})} \drar{\overline{\Psi}_n} \\
	(H^{\otimes_{}n}_{Nou})^\ast \rar[swap]{\Psi_n} & \overline{L_{}^{1}(\Gamma_q(H))}
\end{tikzcd}
\]
From these definitions, we have the following lemma.

\begin{lemma}
	For $P_n$, $\Phi_n$, and $\Psi_n$ as defined above, 
	\[
	P_n = \Phi_n \circ \Psi_n^\ast
	\]
	\label{projfact}
\end{lemma}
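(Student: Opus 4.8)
The plan is to unwind the definition of the dual map $\Psi_n^\ast$ and reduce the identity to three facts that are already in hand: the defining property $W(\xi)\Omega = \xi$ of the Wick product, the trace formula $\tau(y^\ast x) = \langle y\Omega,\, x\Omega\rangle_{\mathcal F_q(H)}$ coming from $\tau(z) = \langle\Omega, z\Omega\rangle$, and the orthogonality of the chaos decomposition $\mathcal F_q(H) = \bigoplus_{m\ge 0} H^{\otimes m}$. Note first that the underlying Banach space of $H^{\otimes n}_{Nou}$ is (isomorphic to) the Hilbert space $H^{\otimes n}$, hence reflexive, so that via $\Gamma_q(H) \cong \overline{L^1(\Gamma_q(H))}^\ast$ and $((H^{\otimes n}_{Nou})^\ast)^\ast = H^{\otimes n}_{Nou}$ the dual map $\Psi_n^\ast$ is a bona fide map $\Gamma_q(H) \to H^{\otimes n}_{Nou}$, and $\Phi_n\circ\Psi_n^\ast$ is an endomorphism of $\Gamma_q(H)$.

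Next I would check the identity on the dense linear span $\mathcal D$ of the Wick products $W(\xi)$ with $\xi$ homogeneous: on $\mathcal D$ the formula defining $P_n$ is literal, and since $\Phi_n \circ \Psi_n^\ast$ is completely bounded (a composition of completely bounded maps, $\Phi_n$ by Nou's theorem and $\Psi_n^\ast$ as the dual of $\Psi_n$), equality on $\mathcal D$ both extends to all of $\Gamma_q(H)$ and shows $P_n$ extends there. So fix $x = W(\xi)$ with $\xi \in H^{\otimes m}$. For any $\eta \in (H^{\otimes n}_{Nou})^\ast$, which by Remark \ref{hilbertop} we may view as a vector of $H^{\otimes n}$, the definitions of the adjoint and of $\Psi_n$ give
\[
\langle \Psi_n^\ast(x),\, \eta \rangle = \langle x,\, \Psi_n(\eta) \rangle = \tau\big(x^\ast\, W(\eta)\big) = \langle x\Omega,\, W(\eta)\Omega \rangle = \langle \xi,\, \eta \rangle,
\]
where the middle equality is the trace duality $\overline{L^1(\Gamma_q(H))}^\ast \cong \Gamma_q(H)$ and the last two use $\tau(x^\ast y) = \langle x\Omega, y\Omega\rangle$, $W(\eta)\Omega = \eta$, and $x\Omega = \xi$. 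Since $\eta$ ranges over all of $H^{\otimes n}$ and $\xi$ lies in the single component $H^{\otimes m}$, this forces $\Psi_n^\ast(x) = p_n(x\Omega) = \delta_{mn}\,\xi$, where $p_n$ denotes the orthogonal projection of $\mathcal F_q(H)$ onto $H^{\otimes n}$. Applying $\Phi_n$ gives $\Phi_n(\Psi_n^\ast(x)) = \delta_{mn}\,W(\xi) = P_n(x)$, which is the claim on $\mathcal D$.

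The step requiring the most care is the bookkeeping of the conjugate-linear (``bar'') conventions: one must check that the identifications $(H^{\otimes n}_{Nou})^\ast \supset \bar H_r^{\otimes n-k}\otimes_h H_c^{\otimes k}$ and $\overline{L^1(\Gamma_q(H))}^\ast \cong \Gamma_q(H)$ are set up so that the two trace pairings above line up and $\Psi_n(\eta) = W(\eta)$ is interpreted consistently on both sides. Once these are pinned down exactly as in Remark \ref{hilbertop} and Lemma \ref{mult}, the displayed computation is forced. A routine final point: that the a priori only densely defined $P_n$ really is the one in the statement is immediate from $\Omega$ being separating, since both $P_n(x)$ and $\Phi_n(\Psi_n^\ast(x))$ send $\Omega$ to $p_n(x\Omega)$.
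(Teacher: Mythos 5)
Your proof is correct and follows essentially the same route as the paper: unwind the adjoint via the trace duality $\langle x, y \rangle = \tau(x^\ast y)$ on $\overline{L^1}$, invoke $W(\xi)\Omega = \xi$ and $\tau(y^\ast x) = \langle y\Omega, x\Omega\rangle$, and use orthogonality of the chaos components to conclude $\Psi_n^\ast(W(\xi)) = \delta_{mn}\,\xi$, after which applying $\Phi_n$ finishes. The paper's proof is exactly this one-line duality chain; the extra framing you supply (reflexivity of $H^{\otimes n}_{Nou}$, the density/extension discussion, the remark on conjugate-linear conventions) is harmless bookkeeping that the paper leaves implicit.
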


\begin{proof}
	Let $\xi \in H^{\otimes_{}m}$ and $\eta \in \left( H^{\otimes_{}n}_{Nou} \right)$. We have
	\[
	\langle \eta,  \Psi_n^\ast(W(\xi))\rangle = \langle \Psi_n(\eta), W(\xi) \rangle = \langle W(\eta), W(\xi) \rangle = \langle \eta, \xi \rangle 
	\]
	Therefore, 
	\[
	\Psi_n^\ast(W(\xi)) = \left\{ \begin{array}{lr}
		\xi & \mbox{if $m = n$} \\
		0 & \mbox{if $m \neq n$}
	\end{array}\right.
	\]
	Since $\Phi_n(\xi) = W(\xi)$, this completes the proof.
\end{proof}
Therefore, $\|P_n\|_{cb} \leq \|\Phi_n\|_{cb} \|\Psi_n\|_{cb}$, and so we must show that $\|\Psi_n\|_{cb} < p(n)$ for some polynomial $p$. Since $R_{n,k}$ is a quotient map by definition, we must show that $\|\bar{\Psi}_n\|_{cb} < p(n)$. With this in mind, we have the following proposition.

\begin{prop}
	Let $\beta_{n,k}: \bar{H}_r^{\otimes_{}n-k} \otimes_{h} H_c^{\otimes_{}k} \to \overline{L_{}^{1}(\Gamma_q(H))}$ be defined by
	\[
	\beta_{n,k}(h_1 \otimes_{} \cdots \otimes_{} h_{n-k} \otimes_{h} h_{n-k+1} \otimes_{} \cdots \otimes_{} h_n) = W(h_1 \otimes_{} \cdots \otimes_{} h_n).
	\]
	so that $\bar{\Psi}_n = (\beta_{n,k})$. We have $\|\beta_{n,k}\|_{cb} \leq C_q$. 	
	\label{coordbound}
\end{prop}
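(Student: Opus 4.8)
The plan is to factor $\beta_{n,k}$ through the Wick-word construction on the ultrapower $\mathcal{N}_\mathcal{U}$ from the Central Limit Theorem subsection, where the relevant maps are manifestly completely positive (hence completely contractive after normalization), and then push everything down by the conditional expectation $E_\mathcal{U}$. Concretely, I would fix the orthonormal basis $\{e_j\}$ of $\ell^2_N(\mathbb{R})$ and set $s_j(h) = s(h\otimes e_j)$ as in Theorem \ref{CLT}; recall from Lemma \ref{ultrawick}(2) that $E_\mathcal{U}\big(N^{-m/2}\sum_{j_1\neq\cdots\neq j_m} s_{j_1}(h_1)\cdots s_{j_m}(h_m)\big)^\bullet = W(h_1\otimes\cdots\otimes h_m)$. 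So the point is to realize the ``off-diagonal sum of products of generators'' as a composition of multiplication and creation/annihilation maps that are each completely contractive on the appropriate operator-space level, at each finite stage $N$, uniformly in $N$, and then take the ultraproduct and apply $E_\mathcal{U}$.

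First I would isolate the two halves of the word: the first $n-k$ letters $h_1,\dots,h_{n-k}$ will be handled by (left) creation operators $l_q(\cdot)$, which by Nou's estimate $\|U_{n,k}\|_{cb}\leq C_q$ live completely boundedly (with constant $C_q$) on $H_c^{\otimes(n-k)}$, and the last $k$ letters by annihilation operators $l_q^\ast(\cdot)$, which live on $\bar H_r^{\otimes k}$; the Haagerup tensor product $\bar H_r^{\otimes n-k}\otimes_h H_c^{\otimes k}$ is precisely the operator space making the joint map $(h_1\otimes\cdots\otimes h_{n-k})\otimes_h(h_{n-k+1}\otimes\cdots\otimes h_n)\mapsto l_q(h_1)\cdots l_q(h_{n-k})l_q^\ast(h_{n-k+1})\cdots l_q^\ast(h_n)$ completely bounded with constant $C_q$ — this is exactly the content of $U_{n,k}$ reindexed. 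Then I would use the Remark \ref{hilbertop} identifications ($H_r\otimes_h \bar K_c\simeq S_1(K,H)$ and its relatives) together with Lemma \ref{mult}, which says the multiplication map $L^2_r(\mathcal{M})\otimes_h \overline{L^2_c(\mathcal{M})}\to L^1(\mathcal{M})$, $a\otimes_h b\mapsto ab^\ast$, is completely contractive, to absorb the ``$W(\xi)$ sits in $\overline{L^1(\Gamma_q(H))}$'' part: a Wick word is, up to the trace duality $\overline{L^1}^\ast\simeq\Gamma_q$, exactly such a product. Chaining: $\beta_{n,k}$ equals $E_\mathcal{U}$ (a cput map, norm $1$ on $L^1$) composed with a map on the ultrapower which at finite level $N$ is $N^{-n/2}\sum_{j_1\neq\cdots\neq j_n}(\text{product of the }s_{j_i})$, and this finite-level map factors as $U$-type creation/annihilation maps followed by multiplication in $L^1$, all with combined cb-norm $\leq C_q$ independent of $N$. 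Taking the limit along $\mathcal{U}$ and applying $E_\mathcal{U}$ gives $\|\beta_{n,k}\|_{cb}\leq C_q$.

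The step I expect to be the main obstacle — and where care is genuinely needed — is bookkeeping the operator-space structure through the off-diagonal restriction $j_1\neq\cdots\neq j_n$: one must check that restricting the full sum (over all tuples) to the off-diagonal part does not worsen the cb-norm, which is where the $N^{-n/2}$ normalization and the orthogonality of the $e_j$ do the work, and that the finite-$N$ factorization through $S_1$-valued / Haagerup-tensor-valued maps is genuinely completely isometric rather than merely bounded, so that the bound survives the ultraproduct. Once the factorization $\beta_{n,k} = (\text{mult})\circ(\text{$U$-type map})$ through the Haagerup tensor product is set up correctly and shown $N$-uniform, the conclusion $\|\beta_{n,k}\|_{cb}\leq C_q$ is immediate, and then $\bar\Psi_n = (\beta_{n,k})_{k=0}^n$ as a map into an $\ell_1$-direct sum has $\|\bar\Psi_n\|_{cb}\leq (n+1)C_q$, giving the polynomial bound $\|\Psi_n\|_{cb}\leq(n+1)C_q$ needed to run the Haagerup-type argument for the w*CCAP.
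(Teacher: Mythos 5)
There is a genuine gap here, and it is precisely the gap the paper's argument is built to fill. Your proposed factorization, schematically $\beta_{n,k} = (\text{mult})\circ(\text{$U$-type map})$, does \emph{not} produce $W(h_1\otimes\cdots\otimes h_n)$. What it produces is the map $v_{n,k}$ (Lemma \ref{little} of the paper), which sends $\xi_{n-k}\otimes_h\xi^k$ to the product of two \emph{shorter} Wick words $W(h_1\otimes\cdots\otimes h_{n-k})\,W(h_{n-k+1}\otimes\cdots\otimes h_n)$. These two maps differ: $W(\xi_{n-k})W(\xi^k)$ equals $W(\xi)$ plus lower-order terms involving contractions between the two blocks, as Lemma \ref{threeprod} makes explicit. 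So the chain ``mult $\circ\,U$-type map, take ultraproduct, apply $E_\mathcal{U}$'' lands on $v_{n,k}$, not on $\beta_{n,k}$.

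In the ultraproduct picture the same issue appears as the distinction between the \emph{full} index sum and the \emph{off-diagonal} sum. The product $u_N(h_1)\cdots u_N(h_n)$ factors nicely but gives $s(h_1)\cdots s(h_n)$ after $E_\mathcal{U}$ (equivalently, the product of shorter Wick words if you split the block), whereas the off-diagonal sum $N^{-n/2}\sum_{j_1\neq\cdots\neq j_n}s_{j_1}(h_1)\cdots s_{j_n}(h_n)$ is what $E_\mathcal{U}$ turns into the genuine Wick word $W(\xi)$ by Lemma \ref{ultrawick}(2). You flag ``bookkeeping the off-diagonal restriction'' as the main obstacle, but this is not bookkeeping that the $N^{-n/2}$ normalization or the orthogonality of the $e_j$ will absorb: the constraint $j_1\neq\cdots\neq j_n$ couples the first $n-k$ indices with the last $k$ indices, so the sum does not split as a product of two operator-space maps at finite $N$, and the cross-terms contribute at leading order. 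Handling them requires an inclusion/exclusion over the number of coincidences across the two blocks, and this is exactly the content of the paper's Proposition \ref{big}: $\beta_{n,k}=\sum_{j\geq 0}(-1)^jq^{\binom{j}{2}}w_{n,k}^j$, where $w_{n,k}^0=v_{n,k}$ and the higher $w_{n,k}^j$ encode $j$ contractions between the two blocks (Lemma \ref{little2}). The geometric weight $|q|^{\binom{j}{2}}$ is what makes the resulting series converge independently of $n$ and gives the uniform $C_q$. Without that identity and the attendant combinatorics (the $\iota'$ statistic, Lemma \ref{wnkj}), the bound $\|\beta_{n,k}\|_{cb}\leq C_q$ does not follow from the ingredients you list. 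A smaller point: $U_{n,k}$ as defined in the paper acts on $H_c^{\otimes n-k}\otimes_h\bar H_r^{\otimes k}$ and lands in $B(\mathcal{F}_q(H))$; the space in Proposition \ref{coordbound} is the dual $\bar H_r^{\otimes n-k}\otimes_h H_c^{\otimes k}$, targeting $\overline{L^1}$, so ``$U_{n,k}$ reindexed'' is really the predual/adjoint picture and needs to be set up as such.
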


The proof of this proposition will require several lemmas. However, note that it follows directly from this proposition that $\|P_n\|_{cb} < C_q n^2$. 

\begin{lemma}
	Let $v_{n,k}: \bar{H}_r^{\otimes n-k} \otimes_h H_c^{\otimes k} \to \overline{L^1(\Gamma_q(H))}$ be the map
	\begin{align*}	
		&v_{n,k}((h_1 \otimes \ldots \otimes h_{n-k}) \otimes_h (h_{n -k + 1} \otimes \ldots \otimes h_n)) \\
		&W(h_1 \otimes \ldots \otimes h_{n-k})W(h_{n-k +1} \otimes \ldots \otimes h_n)
	\end{align*}
		Then $\|v_{n, k}\|_{cb} = 1$.
	\label{little}
\end{lemma}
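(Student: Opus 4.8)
The plan is to realize $v_{n,k}$ as a composition of completely contractive maps so that the bound $\|v_{n,k}\|_{cb}\le 1$ becomes a formal consequence of Lemma~\ref{mult}, and then to note that equality is forced. Write $\mathcal M=\Gamma_q(H)$. By Theorem~\ref{wick} the Wick map $W\colon H^{\otimes m}\to L^2(\mathcal M)$, $\xi\mapsto\widehat{W(\xi)}$, is a Hilbert space isometry onto the $m$-particle subspace; since an isometry between Hilbert spaces is automatically a complete isometry for the row (and for the column) operator space structures, $W$ induces complete isometries $H^{\otimes m}_r\to L^2_r(\mathcal M)$ and $H^{\otimes m}_c\to L^2_c(\mathcal M)$, and likewise for their conjugates.

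First I would set up the two ``legs''. On the creation side, use that $W$ is complex linear to get a complete isometry $B\colon H_c^{\otimes k}\to L^2_c(\mathcal M)$ with $B(h_{n-k+1}\otimes\cdots\otimes h_n)=\widehat{W(h_n\otimes\cdots\otimes h_{n-k+1})}$; reversing the order of the tensor is legitimate because reversal is an isometry of $(H^{\otimes k},\langle\,\cdot\,,\,\cdot\,\rangle_q)$, and the point of the reversal is that for real vectors $W(h_n\otimes\cdots\otimes h_{n-k+1})=W(h_{n-k+1}\otimes\cdots\otimes h_n)^{\ast}$ (immediate from $s(h)^{\ast}=s(h)$ and the Wick recursion). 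On the annihilation side, conjugating $W$ gives a complete isometry $A\colon \bar H_r^{\otimes n-k}\to\overline{L^2_r(\mathcal M)}$ sending the generating tensor to the class of $\widehat{W(h_1\otimes\cdots\otimes h_{n-k})}$; it is the presence of the conjugate operator spaces on both sides that makes $A$ complex linear. Taking the Haagerup tensor product, $A\otimes_h B$ is a complete isometry of $\bar H_r^{\otimes n-k}\otimes_h H_c^{\otimes k}$ into $\overline{L^2_r(\mathcal M)}\otimes_h L^2_c(\mathcal M)$, since $\otimes_h$ is injective on operator spaces.

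Next I would compose with the conjugate of the multiplication map of Lemma~\ref{mult}: applying the conjugation functor to $m\colon L^2_r(\mathcal M)\otimes_h\overline{L^2_c(\mathcal M)}\to L^1(\mathcal M)$, and using that conjugation commutes with $\otimes_h$, produces a complete contraction $\bar m\colon \overline{L^2_r(\mathcal M)}\otimes_h L^2_c(\mathcal M)\to\overline{L^1(\mathcal M)}$ with $\bar m(\bar a\otimes_h b)=\overline{ab^{\ast}}$. By the choices above,
\[
\bar m\bigl(A\otimes_h B\bigr)\bigl(h_1\otimes\cdots\otimes h_{n-k}\otimes_h h_{n-k+1}\otimes\cdots\otimes h_n\bigr)=\overline{W(h_1\otimes\cdots\otimes h_{n-k})\,W(h_{n-k+1}\otimes\cdots\otimes h_n)},
\]
which, after the identification of $L^1(\mathcal M)$ with $\overline{L^1(\mathcal M)}$ built into the target of $v_{n,k}$, is exactly $v_{n,k}$. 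Hence $v_{n,k}=\bar m\circ(A\otimes_h B)$ and $\|v_{n,k}\|_{cb}\le\|\bar m\|_{cb}\,\|A\otimes_h B\|_{cb}\le 1$. The reverse inequality is routine: passing to adjoints identifies $v_{n,k}^{\ast}$ with (a restriction of) the creation/annihilation map $U_{n,k}$ of Nou under the identifications of Remark~\ref{hilbertop}, and $U_{n,k}$ is a complete contraction which is isometric on the vacuum, so $\|v_{n,k}\|_{cb}=\|v_{n,k}^{\ast}\|_{cb}\ge 1$; alternatively, amplifying $v_{n,k}$ over matrix levels and testing against Wick products of pairwise orthonormal vectors gives the same lower bound.

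The step I expect to be the real obstacle is the operator space bookkeeping in the third paragraph: one must verify that, with the row structure on the barred (annihilation) leg and the column structure on the creation leg, with the conjugations on $\overline{L^2_c(\mathcal M)}$ and on the target $\overline{L^1(\mathcal M)}$, and with the order-reversal coming from the adjoint, the maps $A$ and $B$ really are honest complex-linear \emph{complete isometries} and that $\bar m\circ(A\otimes_h B)$ agrees with $v_{n,k}$ on the nose. Once these identifications are pinned down, the estimate is a formal consequence of Lemma~\ref{mult} together with the functoriality of the row and column structures under the Haagerup tensor product recorded in Remark~\ref{hilbertop}.
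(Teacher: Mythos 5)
Your proof takes essentially the same route as the paper's: factor $v_{n,k}=\bar m\circ(W\otimes W)$ through the Wick unitary and the conjugated multiplication map of Lemma~\ref{mult}, using that a Hilbert space unitary is automatically a complete isometry for row/column structures and the Haagerup tensor product is functorial; the extra care you take with order-reversal and conjugations is exactly what is needed to make the factorization literal, and the paper simply elides it. One caveat: your sketch of the lower bound $\|v_{n,k}\|_{cb}\ge 1$ is shaky ($U_{n,k}$ kills the vacuum when $k>0$, and Nou's estimate only gives $\|U_{n,k}\|_{cb}\le C_q$, not $\le 1$), but the paper's own proof also establishes only the upper bound, and only $\|v_{n,k}\|_{cb}\le 1$ is used downstream.
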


\begin{proof}
	$v_{n, k} = m \circ (W \otimes W)$ where $W$ is the Wick word and $\bar{m}: \overline{L^2_r(\Gamma_q(H))} \otimes L_{c}^{2}(\Gamma_q(H)) \to \overline{L^1(\Gamma_q(H))}$ is the canonical multiplication map from Lemma \ref{mult}, which is completely contractive. Note that $W: F_q(H) \to L^2(\Gamma_q(H))$ a unitary transformation.
\end{proof}

\begin{lemma}
	Define $w_{n, k}^j: \bar{H}_r^{\otimes n-k} \otimes_h H_c^{\otimes k} \to \overline{L^1(\Gamma_q(H))}$ by 
	\[
	w_{n, k}^j = v_{n-2j, k-j} \circ (Id_{n-k-j} \otimes m_j \otimes Id_{k-j}) \circ (R_{n-k, j}^\ast \otimes R_{k, k-j}^\ast)
	\]
	where $Id_k$ is the identity on $H^{\otimes k}$ and $m_j: \bar{H}_r^{\otimes j} \otimes H_c^{\otimes j} \to \mathbb{C}$ is simply a duality bracket pairing.

	Then $\|w_{n, k}^j\|_{cb}$ is bounded by a constant depending only on $q$.
	\label{little2}
\end{lemma}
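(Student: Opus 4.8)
The map $w_{n,k}^j$ is a composite of four pieces: the two adjoints $R_{n-k,j}^\ast \otimes R_{k,k-j}^\ast$, the ``partial pairing'' $Id \otimes m_j \otimes Id$, and the multiplication-type map $v_{n-2j,k-j}$ which was shown to be completely contractive in Lemma \ref{little}. The strategy is simply to bound the cb-norm of each factor separately and multiply. The only factor whose cb-norm is not already bounded by $1$ (or by a $q$-dependent constant from a result cited above) is the pairing map $m_j$ on the Haagerup tensor product $\bar H_r^{\otimes j}\otimes_h H_c^{\otimes j}$, so the bulk of the work is there.

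First I would record that $\|R_{m,\ell}^\ast\|_{cb}\le C_q$: this is exactly the content of the operator space structure on $H_{Nou}^{\otimes m}$ introduced in Section 3, where Nou's theorem is invoked, and the formula for $R_{m,\ell}^\ast$ given in the excerpt shows it is a sum over cosets with coefficients $q^{\iota(x)}$; so $\|R_{n-k,j}^\ast\otimes R_{k,k-j}^\ast\|_{cb}\le C_q^2$ by the injectivity/multiplicativity of $\otimes_h$ for cb-maps. Next, by Lemma \ref{little}, $\|v_{n-2j,k-j}\|_{cb}=1$. So it remains to control $\|Id_{n-k-j}\otimes m_j\otimes Id_{k-j}\|_{cb}$, and since the Haagerup tensor product is associative and injective (tensoring a completely bounded map with the identity does not increase cb-norm), this reduces to $\|m_j\|_{cb}$ where $m_j:\bar H_r^{\otimes j}\otimes_h H_c^{\otimes j}\to\mathbb{C}$.

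For $m_j$ I would use Remark \ref{hilbertop}: we have the complete isometry $H_r\otimes_h \bar K_c \simeq S_1(K,H)$, and by associativity $\bar H_r^{\otimes j}\otimes_h H_c^{\otimes j} \simeq \overline{(H^{\otimes j})}_r \otimes_h (H^{\otimes j})_c$, which is completely isometric to $S_1(H^{\otimes j})$ (trace-class operators on $H^{\otimes j}$, up to the conjugation bookkeeping). Under this identification the pairing map $m_j$, which sends $\bar h_1\otimes\cdots\otimes\bar h_{j}\otimes_h g_1\otimes\cdots\otimes g_j$ to $\prod_i\langle h_i,g_i\rangle = \langle h_1\otimes\cdots\otimes h_j,\, g_1\otimes\cdots\otimes g_j\rangle$, is precisely (a scalar multiple of) the trace functional on $S_1(H^{\otimes j})$, which is completely contractive as an element of $S_1(H^{\otimes j})^\ast = B(H^{\otimes j})$ of norm one. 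Hence $\|m_j\|_{cb}\le 1$. Combining, $\|w_{n,k}^j\|_{cb}\le C_q^2$, a constant depending only on $q$.

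The main obstacle is the last step: getting the identification $\bar H_r^{\otimes j}\otimes_h H_c^{\otimes j}\simeq S_1(H^{\otimes j})$ right, including checking that under this identification $m_j$ really is the trace (and that the conjugate-space decorations $\bar H_r$ versus $H_r$, and the order of tensor factors in the Haagerup product, match up so that the pairing is $\prod_i\langle h_i, g_i\rangle$ rather than some reordered or transposed version). This is a matter of carefully unwinding the complete isometries in Remark \ref{hilbertop}(3) and their iterates, but once that bookkeeping is done the cb-bound is immediate from trace-class duality. Everything else — the cb-bounds on $R^\ast$ and on $v$, and the fact that $\otimes_h$ and $\otimes Id$ behave well with respect to cb-norms — is quoted from earlier in the paper or is standard operator-space theory.
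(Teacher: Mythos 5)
Your proof is correct and follows the same decomposition-and-multiply strategy as the paper's proof of this lemma: bound the cb-norm of each of the three factors in the definition of $w_{n,k}^j$ and multiply, obtaining $\|w_{n,k}^j\|_{cb}\leq C_q^2$. In fact the paper's proof is terser than yours on the middle factor --- it simply asserts that $Id_{n-k-j}\otimes m_j\otimes Id_{k-j}$ is ``clearly'' completely contractive without justification --- so your identification of $\bar H_r^{\otimes j}\otimes_h H_c^{\otimes j}$ with a trace-class space and of $m_j$ with the (completely contractive) trace functional supplies the missing argument; the only small looseness (shared with the paper itself) is the attribution of the bound $\|R^\ast_{m,\ell}\|_{cb}\leq C_q$: the map here acts on row and column tensor powers rather than on the $H^{\otimes m}_{Nou}$ structure, so what is really being invoked is Nou's operator-norm estimate on $R^\ast_{m,\ell}$ for homogeneous Hilbertian operator spaces, not the defining property of the Nou structure.
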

\begin{notation}
	For an n-tensor $\xi = h_1 \otimes \cdots \otimes h_n$ and a subset $A = \{\iota_1, \ldots, \iota_k\} \subset \{1, \ldots, n\}$, denote by $\xi_A$ the tensor $h_{\iota_1} \otimes \cdots \otimes h_{\iota_k}$.
\end{notation}

	\begin{rem}
	As the maps $w_{n, k}^j$ are crucial to our argument, we further describe the image of an element of $H_r^{\otimes n-k} \otimes_h H_c^{\otimes k}$. 
		\begin{align*}	
			&w_{n,k}^j (\xi_{n-k} \otimes_h \xi^k) = 
			\sum_{\substack{A \subset \{1, \ldots n-k\} \\ |A| = j}} \sum_{\substack{B \subset\{n-k+1, \ldots n\} \\ |B| = j}} q^{\iota(A) + \iota(B)} \langle \xi_{n-k, A}, \xi_B^{k} \rangle_q W(\xi_{n-k, A^c}) W\left( \xi_{B^c}^k \right)\\
		&= \sum_{\substack{A \subset \{1, \ldots, n-k\} \\ |A| = j}} \sum_{\substack{B \subset \{n-k+1, \ldots, n\}\\ |B| = j}} \sum_{\sigma \in S_j} q^{\iota(A) + \iota(B) + \iota(\sigma)} \prod_{s = 1}^j \langle h_{a_s},h_{b_{\sigma(s)}}  \rangle W(\xi_{n-k, A^c}) W(\xi_{B^c}^k)
			\end{align*}
			where $a_s$ is the $s$th element of $A$ and likewise for $b_s$. $\iota(A)$ and $\iota(B)$ are the number of inversions of the corresponding right cosets from Observation \ref{rnk}.

\end{rem}

\begin{proof}[Proof of Lemma \ref{little2}]
	From Lemma \ref{little} we know that $\|v_{n - 2j, k-j}\|_{cb} \leq 1$ and it is clear that the middle term in the composition defining $w_{n, k}^j$ is completely contractive. It is shown in \cite{nou} that $\|R_{n, k}\|_{cb} \leq C_q$ where $C_q = \prod_{j\geq 1} (1 - q^{j})^{-1}$. Therefore, it is clear that $\|w_{n, k}^j\|_{cb} \leq C_q^2$.
\end{proof}

\begin{lemma}
	Let $\xi = h_1 \otimes \cdots \otimes h_n$, $\eta = k_1 \otimes \cdots \otimes k_m$, and $\theta = f_1 \otimes \cdots \otimes f_\ell$. We have
	\[
	\tau(W(\xi)W(\eta)W(\theta)) = \sum_{A, B, C} q^{\iota(A) + \iota(B) + \iota(C)} \langle \xi_A, \eta_B \rangle \langle \xi_{A^c}, \theta_C \rangle \langle \eta_{B^c}, \theta_{C^c} \rangle 
	\]
	where $A \subset \{1, \ldots, n\}$, $B \subset \{1, \ldots, m\}$, and $C \subset \{1, \ldots, \ell\}$, such that $|A| = |B|$, $|A^c| = |C|$, and $|B^c| = |C^c|$ and the sum ranges over all such subsets.
	\label{threeprod}
\end{lemma}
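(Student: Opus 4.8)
The statement is a bookkeeping identity about $q$-crossings, so the plan is a direct computation built on the Wick formula of Theorem~\ref{wick}. First rewrite the trace as a vacuum expectation: since $\tau(x)=\langle\Omega,x\Omega\rangle$ and $W(\theta)\Omega=\theta$,
\[
\tau(W(\xi)W(\eta)W(\theta))=\langle\Omega,\,W(\xi)W(\eta)\,\theta\rangle=\langle W(\xi)^{\ast}\Omega,\,W(\eta)\,\theta\rangle ,
\]
and $W(\xi)^{\ast}\Omega=h_n\otimes\cdots\otimes h_1$, since $W(h_1\otimes\cdots\otimes h_n)^{\ast}=W(h_n\otimes\cdots\otimes h_1)$. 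So everything reduces to computing the degree-$n$ homogeneous component of the vector $W(\eta)\theta\in\mathcal{F}_q(H)$ and pairing it against a reversed tensor in $H^{\otimes n}$.

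For that I would apply Theorem~\ref{wick}: $W(\eta)=\sum_{j=0}^{m}U_{m,j}R^{\ast}_{m,j}(\eta)$, and by Observation~\ref{rnk} the $j$-th summand is $\sum_{B}q^{\iota(B)}\,l_q(\eta_{B^{c}})\,l_q^{\ast}(\eta_{B})$, the sum over $j$-subsets $B\subseteq\{1,\dots,m\}$, with $l_q(\eta_{B^{c}})$ a product of $m-j$ creation operators and $l_q^{\ast}(\eta_{B})$ a product of $j$ annihilation operators. Applying $l_q^{\ast}(\eta_B)$ to $\theta=f_1\otimes\cdots\otimes f_\ell$ via the explicit formula for $l_q^{\ast}$ contracts $j$ of the entries of $\theta$ against the $j$ entries of $\eta_B$, leaving a subtensor $\theta_{C^{c}}$ ($C$ a $j$-subset of $\{1,\dots,\ell\}$) and a computable power of $q$; then $l_q(\eta_{B^{c}})$ prepends $\eta_{B^{c}}$. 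Thus $W(\eta)\theta$ is a sum of tensors $\eta_{B^{c}}\otimes\theta_{C^{c}}$, whose degree $(m-j)+(\ell-j)$ equals $n$ only for $j=\tfrac12(m+\ell-n)$, so a single $j$ contributes. Summing the $l_q^{\ast}$-weights over all orderings of the contracted entries of $\theta$ reassembles, by the defining sum of the $q$-inner product, into a factor $q^{\iota(C)}\langle\eta_B,\theta_C\rangle$; hence the degree-$n$ part of $W(\eta)\theta$ is $\sum_{B,C}q^{\iota(B)+\iota(C)}\langle\eta_B,\theta_C\rangle\,\eta_{B^{c}}\otimes\theta_{C^{c}}$. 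Pairing with the reversed $\xi$ and splitting the $q$-inner product of the concatenation along its two factors — an identity of the same shape, proved by the same $l_q^{\ast}$-manipulation or read off from \cite{BKS,krolak} — produces the sum over $A$ in the statement, after relabeling $B\leftrightarrow B^{c}$, $C\leftrightarrow C^{c}$ and reindexing away the reversal (using that conjugation by the order-reversing permutation preserves the number of inversions).

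The genuine content, and the step I expect to be fiddly, is checking that all the $q$-exponents match up: the weights $q^{\,c-1}$ produced by successive $l_q^{\ast}$'s, the coset exponent $\iota(B)$ from $R^{\ast}_{m,j}$, and the exponents appearing when orderings are collapsed into $q$-inner products must sum to exactly $\iota(A)+\iota(B)+\iota(C)$ together with the internal crossings of the three $q$-inner products — equivalently, to the total crossing number of the associated pair partition of the $n+m+\ell$ slots. I would settle this either by induction on $\ell$, peeling off the last entry of $\theta$ with the right-hand Wick recursion (which rewrites $W(\zeta\otimes f)$ as $W(\zeta)s(f)$ minus a $q$-weighted sum of $W(\zeta)$'s with one entry deleted) and using the two-word case $\tau(W(\xi)W(\eta))=\langle h_n\otimes\cdots\otimes h_1,\eta\rangle$ as the base, or by exhibiting the bijection between the index data $(A,B,C)$ plus three matchings and the pair partitions of three consecutive blocks with no pair internal to a block, comparing crossing numbers block by block. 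A conceptually cleaner alternative, essentially equivalent in effort, is to expand each of $W(\xi)$, $W(\eta)$, $W(\theta)$ into a signed sum of products of $s(\cdot)$'s by inclusion–exclusion, apply the moment formula of Theorem~\ref{moment} to each monomial, and note that the inclusion–exclusion cancels exactly the pair partitions having a pair internal to one of the three words.
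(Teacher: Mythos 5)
Your main route—compute $W(\eta)\theta$ on the Fock space via Theorem~\ref{wick}, project to degree $n$, and pair against $W(\xi)^\ast\Omega$—is not what the paper does, and the step you flag as the ``fiddly'' one (matching the $q$-exponents from iterated $l_q^\ast$'s, the coset weights $\iota(B)$, and the collapsing into three $q$-inner products) is exactly where the content of the lemma lives; your sketch leaves it unverified and offers three different ways it ``might'' close. As written this is a genuine gap: you would have to actually carry out the inductive bookkeeping or exhibit the bijection with pair partitions block by block and match crossing numbers, which is a nontrivial combinatorial check, not a formality.

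Your closing alternative is, however, much closer to the paper's actual argument, and it is worth seeing what the paper buys by doing it differently. You propose expanding each $W(\cdot)$ into a signed (inclusion--exclusion) sum of $s$-monomials, applying the moment formula Theorem~\ref{moment}, and arguing that the signs kill the pair partitions with a pair internal to one of the three words. The paper instead invokes Lemma~\ref{ultrawick}: each Wick word is realized as $E_\mathcal{U}\bigl(N^{-n/2}\sum_{j_1\neq\cdots\neq j_n}s_{j_1}(h_1)\cdots s_{j_n}(h_n)\bigr)^\bullet$, a sum of $s$-monomials with \emph{pairwise-distinct colors within each block}. After applying Theorem~\ref{moment} to $\tau(x_\alpha y_\beta z_\gamma)$ at finite $N$, any pair partition containing a pair internal to one block picks up a factor $\langle e_{\alpha_i},e_{\alpha_j}\rangle=0$ and vanishes identically—no sign cancellation is needed. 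Only cross-block pairings survive, the $N^{-(n+m+\ell)/2}$ normalization matches the number of surviving index choices, and Observation~\ref{rnk} then splits $q^{\iota(\sigma)}$ into $q^{\iota(A)+\iota(B)+\iota(C)}$ times the internal crossings absorbed into the three $q$-inner products. So the ultraproduct ``coloring'' automates precisely the cancellation you would otherwise have to prove by hand; if you want to proceed without it, the inclusion--exclusion cancellation needs to be established as a separate lemma.
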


\begin{proof}
	We note first that $\ell + m + n$ must be even. Let $\alpha$, $\beta$, and $\gamma$ be multi-indices of lengths $n$, $m$, and $\ell$ respectively such that they are each pairwise not equal (i.e. $\alpha_j \neq \alpha_k$ for $j\neq k$). Let $x_\alpha = s_{\alpha_1}(h_1)\cdots s_{\alpha_n}$, $y_\beta = s_{\beta_1}(k_1) \cdots s_{\beta_m}(k_m)$, and $z_\gamma = s_{\gamma_1}(f_1) \cdots s_{\gamma_\ell}(f_\ell)$. We apply Lemma \ref{ultrawick} and Theorem \ref{moment}.
	\begin{align*}
		&\tau(W(\xi)W(\eta)W(\theta)) = \lim_N N^{(n + m +\ell)/2} \sum_{\alpha, \beta, \gamma}\tau(x_\alpha y_\beta z_\gamma)\\
		&= \lim_N N^{(n + m + \ell)/2} \sum_{\alpha, \beta, \gamma} \sum_{\sigma \in P_2(n + m + \ell)} q^{\iota(\sigma)} \prod_{\{\alpha_r, \beta_s\}\in \sigma} \langle h_r, k_2 \rangle \prod_{\{\alpha_r, \gamma_s\} \in \sigma} \langle h_r, f_s \rangle \prod_{\{\beta_r, \gamma_s\} \in \sigma} \langle k_r, f_s \rangle.
	\end{align*}
	Identifying $A$, $B$, and $C$ as the subsets of multi-indices $\alpha$, $\beta$, and $\gamma$ so that $\{\alpha_j, \beta_k\} \in \sigma$ if and only if $\alpha_j \in A$ and $\beta_k \in B$ and $\{\alpha_j, \gamma_k\} \in \sigma$ if and only if $\alpha_j \in A^c$ and $\gamma_k \in C$, from Observation \ref{rnk}, we get the result.
	
\end{proof}
\begin{notation}
	We shall denote by $P_{1, 2}(m)$ the set of all partitions of $\{1, \ldots, m\}$ whose parts are no larger than two and set 
\[
P_{1, 2}^k(m)= \{\sigma \in P_{1, 2} |  \{i,j\} \in \sigma \Rightarrow i \in \{1, \ldots, m-k\} \mbox{ and } j \in \{m-k+1, \ldots m\}\}.
\]
For $\sigma \in P_{1, 2}(m)$, $\iota(\sigma)$ will denote the number of ``crossings'' of $\sigma$, i.e.
\[
\iota(\sigma) = |\{\{i, j\}, \{k, \ell\} \in \sigma: i < k < j < \ell\}| + |\{\{i, j\}, \{k\} \in \sigma:  i < k < j\}|.
\] 
We shall denote by $P_{1,2}^{j,k}(n)$ the subset of $P_{1,2}^k(n)$ with exactly $j$ pairs.
\end{notation}

\begin{exam}
	Let 
	\[
	\sigma = \{ \{1\},\{2, 5\}, \{3\}, \{4,7\}, \{6\}, \{8\}\} \in P_{1,2}^{2, 4}(8).
	\]
	We may represent $\sigma$ using the following figure. 
	
	\begin{figure}[h!]
		\begin{center}
			\includegraphics[scale = 0.75]{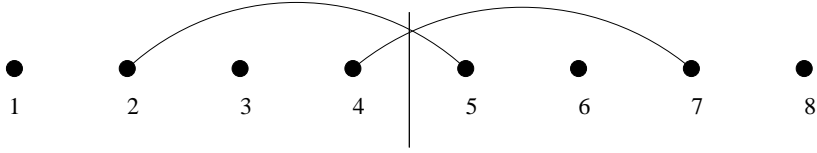}
		\end{center}
		\caption{$\sigma$, $\iota(\sigma) = 3$}
		\label{fig:base}
	\end{figure}
	We can see that $\iota(\sigma) = 3$ since $\{2, 5\}$ crosses the singleton $\{3\}$, $\{4, 7\}$ crosses the singleton $\{6\}$, and the pairs $\{2, 5\}$ and $\{4, 7\}$ cross.
\end{exam}
Before we state the key proposition, we shall need to study two ``color'' operators. In the case of $y_j$, $j$ tensors are given an arbitrary new color, whereas for $z_j$, $j$ tensors are given a new color in decreasing order of colors. 

\begin{defi}
	Let $H$ be a real Hilbert space and $\{e_\ell\}_{\ell = 0}^j$ an orthonormal basis of $\ell_{j+1}^{2}(\mathbb{R})$, we define 
	\[
	y_j: H^{\otimes n} \to (H \otimes \ell_{j+1}^{2}(\mathbb{R}))^{\otimes n}
	\]
	by
	\[
	y_j(h_1 \otimes \cdots \otimes h_n) = \sum_{\substack{A \subset \{1, \cdots, n\} \\ |A| = j}} \sum_{\substack{f:A \to \{1, \cdots, j\} \\ f|_A \mbox{ {\tiny a bijection}} \\ f(A^c) = \{0\}}} (h_1 \otimes e_{f(1)}) \otimes \cdots \otimes (h_n \otimes e_{f(n)}).
	\]
	Also, we define
	\[
	z_j: H^{\otimes n} \to (H \otimes \ell_{j+1}^{2}(\mathbb{R}))^{\otimes n}
	\]
	by
	\[
	z_j(h_1 \otimes \cdots \otimes h_n) = \sum_{\substack{A \subset \{1, \ldots, n\} \\ |A| = j}} (h_1 \otimes e_{f_A(1)}) \otimes \ldots \otimes (h_n \otimes e_{f_A(n)})
	\]
	where 
	\[
	f_A(\ell) = \left\{ \begin{array}{ll}
		0 & \mbox{ if $\ell \notin A$}\\
		j-k+1 & \mbox{ if $\ell$ is the kth largest element of $A$}
	\end{array}.\right.
	\]
\end{defi}

\begin{notation}
	For $\{e_\ell\}_{\ell = 0}^j$ and orthonormal basis of $\ell_{j+1}^{2}(\mathbb{R})$, let 
	\[
	E_j: \Gamma_q(H \otimes \ell_{j+1}^{2}(\mathbb{R})) \to \Gamma_q(H \otimes \ell_{j}^{2}(\mathbb{R}))
	\]
	be the conditional expectation given by $E_j = \Gamma_q(P_j)$, where $P_j$ is the projection such that $P_j(e_j) = 0$ and $P_j(e_\ell) = e_\ell$ for $\ell \neq j$. 
\end{notation}

	\begin{defi}
		Let $\sigma_\emptyset \in P_{1,2}^k(n)$ be the singleton partition, and let $\sigma_j \in P_{1,2}^{j,k}(n)$ and $\sigma_{j-1}\in P_{1,2}^{j-1,k}(n)$ be such that $\sigma_j \setminus \sigma_{j-1} = \{\ell_1, \ell_2\}$ where if $\{k_1, k_2\} \in \sigma_{j-1}$, then $\ell_1 < k_1$. We define a new function $\iota'$ on $P_{1,2}^k(n)$ recursively by
		\begin{enumerate}
			\item $\iota'(\sigma_\emptyset) = 0$.
			\item $\iota'(\sigma_j) = \iota'(\sigma_{j-1}) +$ 
				\[
				|\left\{ \{m\} \in \sigma_{j-1}: \ell_1 < m < \ell_2 \right\}| + 2|\left\{ \{k_1, k_2\} \in \sigma_{j-1}: \ell_1 < k_1 < k_2 < \ell_2 \right\}|
				\]
		\end{enumerate}
	\end{defi}

	\begin{exam}
		\label{examnewiota}
		Let 
		\[
		\sigma = \{ \{1, 6\}, \{2, 5\}, \{3\}, \{4\}, \{7\}, \{8\}\}
		\]
		shown in Figure \ref{fig:partition}.   
		\begin{figure}[h!]
			\begin{center}
				\includegraphics[scale = 0.75]{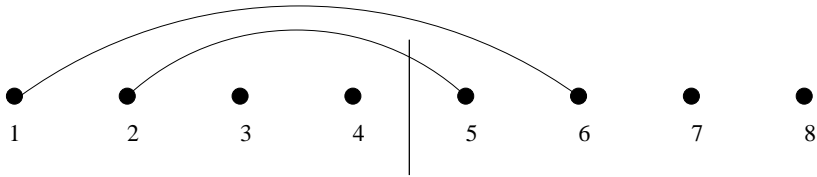}	
			\end{center}
			\caption{$\iota'(\sigma) = 6$, $\iota(\sigma) = 4$}
			\label{fig:partition}
		\end{figure}

		We can see that $\iota'(\sigma) = 6$ since $\{1,6\}$ ``contains'' $\{2, 5\}$, $\{3\}$, and $\{4\}$, and $\{2, 5\}$ ``contains'' $\{3\}$ and $\{4\}$. 
	\end{exam}
	We now have the following lemma
\begin{lemma}
	\label{wnkj}
		For $w_{n,k}^j$, $y_j$, and $z_j$ as above, we have
		\[
		q^{ {j \choose 2}} w_{n,k}^j(\xi_{n-k} \otimes \xi^k) = E_\mathcal{U} \left(E_1 \cdots E_j\left(z_j(u_N(\xi_{n-k})) y_j(u_N(\xi^k))\right)\right)
		\]
	\end{lemma}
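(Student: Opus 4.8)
The plan is to prove the identity by pairing both sides against the dense family of Wick words $W(\theta)$ with $\theta = f_1\otimes\cdots\otimes f_\ell$ a simple tensor, and evaluating each pairing by a central‑limit/moment computation. Since the left side is a finite linear combination of products of two Wick words and the right side lies in $\overline{L^1(\Gamma_q(H))}$, it suffices to verify $\tau\big(q^{\binom{j}{2}}w_{n,k}^j(\xi_{n-k}\otimes\xi^k)\cdot W(\theta)\big) = \tau\big(E_\mathcal{U}(E_1\cdots E_j(z_j(u_N(\xi_{n-k}))y_j(u_N(\xi^k))))\cdot W(\theta)\big)$ for all such $\theta$ (and by $\ast$‑linearity it is enough to use $W(\theta)$ itself in the bracket). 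A $T_t$‑eigenvalue argument as in the proof of Lemma~\ref{ultrawick} shows the right side lies in the range of $P_{n-2j}$, so in fact only $\ell = n-2j$ contributes, but I would keep $\theta$ general.

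For the left side I would substitute the displayed expansion of $w_{n,k}^j$ (as a sum over $j$‑element "paired" sets $A\subset\{1,\dots,n-k\}$, $B\subset\{n-k+1,\dots,n\}$ and $\sigma\in S_j$) and then apply Lemma~\ref{threeprod} to each factor $\tau\big(W(\xi_{n-k,A^c})W(\xi^k_{B^c})W(\theta)\big)$. This rewrites $\tau\big(q^{\binom{j}{2}}w_{n,k}^j(\xi_{n-k}\otimes\xi^k)W(\theta)\big)$ as a sum over tuples $(A,B,\sigma,\mathcal A,\mathcal B,\mathcal C)$, where $(\mathcal A,\mathcal B,\mathcal C)$ encode the three‑word pair partition of $\xi_{n-k,A^c}$, $\xi^k_{B^c}$, $\theta$ produced by Lemma~\ref{threeprod}, with coefficient $q^{\binom{j}{2}+\iota(A)+\iota(B)+\iota(\sigma)+\iota(\mathcal A)+\iota(\mathcal B)+\iota(\mathcal C)}$ times the corresponding product of inner products of the $h$'s and $f$'s.

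For the right side I would use that $E_\mathcal{U}$ is the trace‑preserving conditional expectation onto $\Gamma_q(H)$ intertwining $\pi_\mathcal{U}$, together with $\pi_\mathcal{U}(W(\theta)) = (u_N(\theta))^\bullet$ (a restatement of Lemma~\ref{ultrawick}(2)); since $E_1\cdots E_j = \Gamma_q(P_1\cdots P_j)$ is the conditional expectation onto $\Gamma_q(H\otimes\mathbb{C}e_0\otimes\ell^2_N)$ and $u_N(\theta)$ is supported on the colour‑$0$ coordinate, it may be pulled outside, reducing the right side to $\lim_{N,\mathcal U}\tau_N\big(z_j(u_N(\xi_{n-k}))\,y_j(u_N(\xi^k))\,u_N(\theta)\big)$. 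Expanding $z_j$, $y_j$, $u_N$ by their definitions and invoking the moment formula (Theorem~\ref{moment}), the $\ell^2_N$‑index $\delta$'s together with the pairwise distinctness of the indices inside each of the three $u_N$‑blocks force every pair of the pair partition to join two distinct blocks; the colour $\delta$'s then force each colour‑$c$ leg of the $z_j$‑block to pair with the colour‑$c$ leg of the $y_j$‑block, and the sum over bijections in the definition of $y_j$ makes this matching range over all $\sigma\in S_j$, while the remaining uncoloured legs form a three‑word pair partition of $(\xi_{n-k,A^c},\xi^k_{B^c},\theta)$ in the sense of Lemma~\ref{threeprod}. Counting the surviving $\ell^2_N$‑index configurations, their number is $\sim N^{(n+\ell)/2}$, exactly cancelling the normalisation $N^{-(n+\ell)/2}$, so in the limit the right side becomes $\sum_{A,B,\sigma}\sum_{\pi'} q^{\iota(\pi)}(\text{the same product of inner products})$, where $\pi$ is the pair partition assembled from the coloured pairs (encoded by $\sigma$) and the uncoloured three‑word pair partition $\pi'$ (encoded by $(\mathcal A,\mathcal B,\mathcal C)$).

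It then remains to match the $q$‑exponents, i.e.\ to prove the purely combinatorial chord‑diagram identity $\iota(\pi) = \binom{j}{2}+\iota(A)+\iota(B)+\iota(\sigma)+\iota(\mathcal A)+\iota(\mathcal B)+\iota(\mathcal C)$ for the assembled $\pi$. I would prove this by decomposing $\iota(\pi)$ as (crossings among the $j$ coloured chords) $+$ (crossings between coloured and uncoloured chords) $+$ (crossings among uncoloured chords): the last term is exactly $\iota(\mathcal A)+\iota(\mathcal B)+\iota(\mathcal C)$ by the computation inside the proof of Lemma~\ref{threeprod}; the coloured–uncoloured term equals $\iota(A)+\iota(B)$ because a chord from $a\in A$ (resp.\ $\hat b\in B$) crosses an uncoloured chord precisely when the uncoloured endpoint lies to the appropriate side of $a$ (resp.\ $\hat b$), which is exactly the way the coset‑inversion numbers $\iota(A)$, $\iota(B)$ were defined in Observation~\ref{rnk}; and the coloured–coloured term produces the $\binom{j}{2}$ prefactor together with the $\iota(\sigma)$ contribution, the ``decreasing'' colouring built into $z_j$ versus the full colour sum of $y_j$ being what pins down the sign. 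Carrying out this bookkeeping of crossings — keeping careful track of the colours assigned by $z_j$ and $y_j$ and of the direction of the colouring — is the delicate part of the argument; once it is done, the two expansions agree termwise and the lemma follows.
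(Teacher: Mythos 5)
Your approach is genuinely different from the paper's. The paper proves the identity directly at the operator level: it expands both sides as explicit sums $\sum_{\rho\in P^{j,k}_{1,2}(n)} q^{(\cdot)}\prod\langle h_{\ell_1},h_{\ell_2}\rangle\, W(\xi_{n-k,\rho})W(\xi^k_\rho)$, computes the $q$-power produced by iterating $E_j,E_{j-1},\dots,E_1$ through the auxiliary function $\iota'$, and then proves the purely combinatorial identity $\iota'(\rho)=\iota(A)+\iota(B)+\iota(\sigma)+\binom{j}{2}$ by induction on the number of pairs. You instead propose to test both sides against Wick words $W(\theta)$, invoke Lemma~\ref{threeprod} on the left and the moment/CLT expansion on the right, and match a total crossing number of a single large pair partition $\pi$ of $\{1,\dots,n+\ell\}$. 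This would avoid introducing $\iota'$, at the cost of having to prove a more elaborate chord-diagram decomposition.

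The gap is precisely in that decomposition, which you acknowledge is ``the delicate part'' and only sketch. As sketched, it does not hold. You claim the coloured--coloured crossings contribute $\binom{j}{2}+\iota(\sigma)$. But all $j$ coloured chords run from the $z_j$-block $\{1,\dots,n-k\}$ to the $y_j$-block $\{n-k+1,\dots,n\}$; if $A=\{a_1<\dots<a_j\}$, $B=\{b_1<\dots<b_j\}$, and the matching is $a_s\leftrightarrow b_{\sigma(s)}$, then two such chords cross iff $\sigma$ preserves their relative order, so the coloured--coloured crossing count is $\binom{j}{2}-\iota(\sigma)$, not $\binom{j}{2}+\iota(\sigma)$ (indeed, $\binom{j}{2}$ is an upper bound for the number of crossings among $j$ chords, so $\binom{j}{2}+\iota(\sigma)$ is impossible for $\sigma\ne\mathrm{id}$). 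Consequently your claimed breakdown (a) $=\binom{j}{2}+\iota(\sigma)$, (b) $=\iota(A)+\iota(B)$, (c) $=\iota(\mathcal A)+\iota(\mathcal B)+\iota(\mathcal C)$ cannot all be correct: with (a) corrected, the coloured--uncoloured term would have to carry an extra $2\iota(\sigma)$, and you give no argument for why the coloured--uncoloured crossings should depend on $\sigma$ in exactly this way, nor for why the $\sigma$-independent part equals the coset-inversion numbers $\iota(A)+\iota(B)$. This is not a bookkeeping detail that can be waved through: the whole content of the lemma is this $q$-exponent matching, and your sketch of it is incorrect as written. To repair it you would essentially have to redo the paper's induction in disguise, or find a different combinatorial decomposition of $\iota(\pi)$ than the one you propose.

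A smaller remark: the ``number of surviving $\ell^2_N$-index configurations $\sim N^{(n+\ell)/2}$'' step is fine at leading order, but you should note that within-block pairings (two legs in the same $u_N$-block carrying the same $\ell^2_N$-index) also occur at order $N^{(n+\ell)/2}$ and must be argued to vanish in the ultralimit; this is routine but not automatic from the counting you give.
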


	\begin{proof}
	For the left hand side, we have that
	\begin{align*}
		&q^{ {j \choose 2}} w_{n, k}^j(\xi_{n-k} \otimes \xi^k) \\
		&= q^{ {j \choose 2}} \sum_{ \substack{ A \subset \{1, \ldots, n-k\} \\ |A| = j}} \sum_{ \substack{B \subset \{1, \ldots, k\} \\ |B|= j}} q^{\iota(A) + \iota(B)} \langle \xi_{n-k, A}, \xi_B^k \rangle_q W(\xi_{n-k, A^c}) W(\xi_{B^c}^k)\\
		&= \sum_{ \substack{ A \subset \{1, \ldots, n-k\} \\ |A| = j}} \sum_{ \substack{B \subset \{1, \ldots, k\} \\ |B|= j}} \sum_{\sigma \in S_j} q^{ \iota(A) + \iota(B) + \iota(\sigma) + {j \choose 2}} \prod_{s = 1}^j \langle h_{a_s}, h_{b_\sigma(s)} \rangle W(\xi_{n-k, A^c}) W(\xi_{B^c}^k)\\
		&= \sum_{\rho \in P_{1,2}^{j, k}(n)} q^{\iota(A) + \iota(B) + \iota(\sigma) + {j \choose 2}} \prod_{\{\ell_1, \ell_2\} \in \rho} \langle h_{\ell_1}, h_{\ell_2} \rangle W(\xi_{n-k, \rho}) W(\xi_\rho^k),
	\end{align*}
	where $\xi_\rho$ denotes $\xi$ with the pairs of $\rho$ removed.
	For the right hand side, we have that
	\begin{align*}
		&E_\mathcal{U} \left( E_1\cdots E_j\left( z_j(u_N(\xi_{n-k}))y_j(u_N(\xi^k) \right) \right)\\ &= E_\mathcal{U} \left(  \sum_{ \substack{ A \subset \{1, \ldots, n-k\} \\ |A| = j}} \sum_{ \substack{B \subset \{1, \ldots, k\} \\ |B|= j}} \sum_{ \substack{g: B \to \{1, \ldots, j\} \\ g \mbox{ a bijection}}} E_1\cdots E_j\left(u_N(\xi_{n-k}^A) u_N(\xi^{k, (B, g)})\right) \right),
	\end{align*}
	where $\xi_{n-k}^A$ is corresponding tensor in image of $z_j$ in $(H \otimes \ell_{j+1}^{2}(\mathbb{R}))^{\otimes n-k}$ and similarly for $\xi^{k, (B,g)}$. We examine 
	\[
	E_1\cdots E_j\left( u_N(\xi_{n-k}^A)u_N(\xi^{k, (B, g)}) \right)
	\]
	for fixed $A$, $B$, and $g$. For this term and fixed $N$, we have
	\begin{align*}
		&E_1\cdots E_j\left( u_N(\xi_{n-k}^A)u_N(\xi^{k, (B, g)}) \right)\\
		&=  q^{\ell_2 - \ell_1-1} \langle h_{\ell_1} \otimes e_{\alpha_{\ell_1}}, h_{\ell_2} \otimes e_{\beta_{\ell_2}}\rangle E_1 \cdots E_{j-1} \left( N^{-1}u_N(\xi_{n-k}^{A \setminus \{\ell_1\}}) u_N(\xi^{k, (B \setminus \{\ell_2\}, g')})  \right)\\
		&=  q^{\iota'(\rho)} \prod_{\{\ell_1, \ell_2\} \in \rho} \langle h_{\ell_1} \otimes e_{\alpha_{\ell_1}}, h_{\ell_2} \otimes e_{\beta_{\ell_2}} \rangle N^{-j}u_N(\xi_{n-k, \rho}) u_N(\xi_\rho^k),
	\end{align*}
	where $\rho \in P_{1, 2}^{j,k}(n)$ denotes the partition whose pairs are given by $\{\ell_1, \ell_2\} \in \rho \Rightarrow f_A(\ell_1) = g(\ell_2) \neq 0$. The factor $N^{-j}$ comes from the fact that we are shortening the tensors so we must compensate for the factor of $N$ in the formula for $u_N(\xi)$. To see that $\iota'(\rho)$ is the appropriate power of $q$, after applying $E_j$ we have a term $q^{\ell_2 - \ell_1 - 1}$. However, we have removed $h_{\ell_1}$ and $h_{\ell_2}$ from the tensor, so we must compensate for the remaining pairs in $\rho$ which cross $\{\ell_2\}$. Letting $\rho = \rho_j$ and the partition with the same pairs of $\rho$ except $\{\ell_1, \ell_2\}$ be $\ell_2$, we get,
	\begin{align*}
		&\iota'(\rho_j) - \iota'(\rho_{j-1}) = \ell_2 - \ell_1 - 1 - |\{ \{k_1, k_2\} \in \rho_2: k_1 < \ell_2 < k_2\}|\\
		&= |\{\{m\}\in \rho_j: \ell_1 < m < \ell_2\}| + 2|\{\{k_1, k_2\} \in \rho_j: \ell_1 < k_1 < k_2 < \ell_2\}|
	\end{align*}
	where the last equality follows by simply observing that those are the elements remaining after removing the elements of $\{ \{k_1, k_2\} \in \rho_2: k_1 < \ell_2 < k_2\} $. Let's reconsider Example \ref{examnewiota}.
	\begin{exam}
		Recall we used
	\[
		\sigma = \{ \{1, 6\}, \{2, 5\}, \{3\}, \{4\}, \{7\}, \{8\}\}
		\]

		After applying $y_2$ and $z_2$, we have colored the indices which appear in the two pairs of $\sigma$ as shown in Figure \ref{fig:newiota1}. 

		\begin{figure}[h!]
			\begin{center}
				\includegraphics[scale = 0.75]{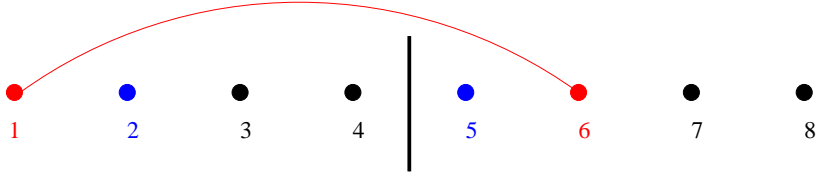}	
			\end{center}
			\caption{$z_j \otimes y_j$}
			\label{fig:newiota1}
		\end{figure}

		We then apply the conditional expectation $E_2$, and the result is shown in Figure \ref{fig:newiota2}.

		\begin{figure}[h!]
			\begin{center}
				\includegraphics[scale=0.75]{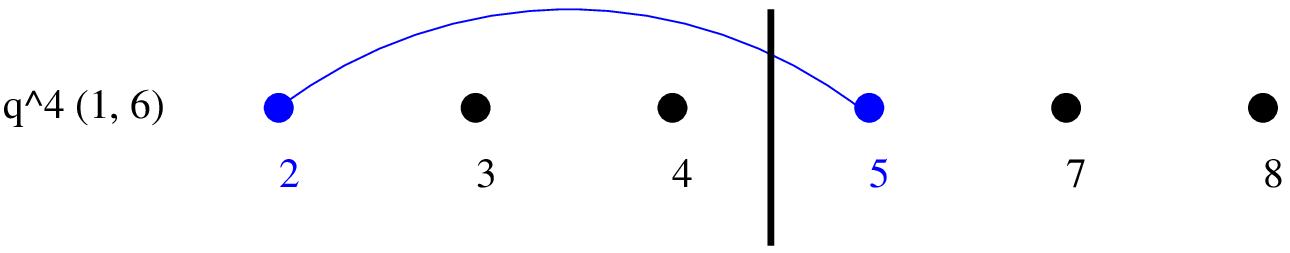}
			\end{center}
			\caption{$E_2$}
			\label{fig:newiota2}
		\end{figure}

		Finally, we apply $E_1$, and the result is shown in Figure \ref{fig:newiota3}.

		\begin{figure}[h!]
			\begin{center}
				\includegraphics[scale=0.75]{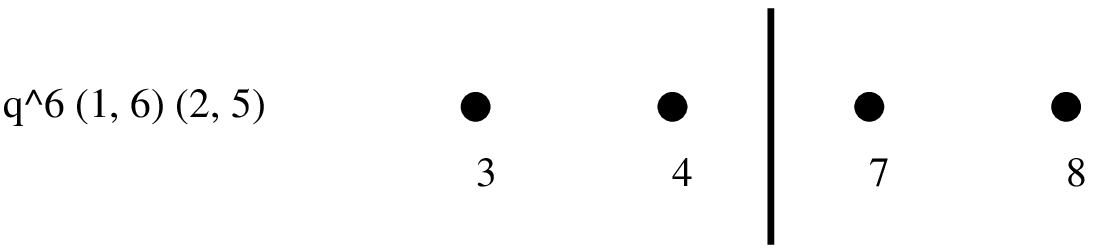}
			\end{center}
			\caption{$E_1$}
			\label{fig:newiota3}
		\end{figure}

		This coincides with $\iota'(\sigma)$ as shown in Example \ref{examnewiota}. 

	\end{exam}
	We now apply $E_\mathcal{U}$ to an element such as
	\[
	 q^{\iota'(\rho)} \prod_{\{\ell_1, \ell_2\} \in \rho} \langle h_{\ell_1} \otimes e_{\alpha_{\ell_1}}, h_{\ell_2} \otimes e_{\beta_{\ell_2}} \rangle N^{-j}u_N(\xi_{n-k, \rho}) u_N(\xi_\rho^k).
	\]
	From Theorem \ref{ultrawick} we get
	\begin{align*}
		&E_\mathcal{U} q^{\iota'(\rho)} \prod_{\{\ell_1, \ell_2\} \in \rho} \langle h_{\ell_1} \otimes e_{\alpha_{\ell_1}}, h_{\ell_2} \otimes e_{\beta_{\ell_2}} \rangle N^{-j} u_N(\xi_{n-k, \rho}) u_N(\xi_\rho^k)\\ 
		&= q^{\iota'(\rho)} \prod_{\{\ell_1, \ell_2\} \in \rho} \langle h_{\ell_1}, h_{\ell_2} \rangle E_\mathcal{U} u_N(\xi_{n-k, \rho})u_N(\xi_\rho^k)\\
		&= q^{\iota'(\rho)} \prod_{\{\ell_1, \ell_2\}\in \rho} \langle h_{\ell_1},  h_{\ell_2} \rangle W(\xi_{n-k, \rho}) W(\xi_\rho^k)
	\end{align*}
	Here the factor $N^{-j}$ is offset since for each pair $\{\ell_1, \ell_2\} \in \rho$, $\alpha_{\ell_1} = \beta_{\ell_2}$ in order for the inner product to be non-zero. For each of the $j$ partitions, there are $N$ possibilities for indices which match. This gives a factor of $N^j$. From Theorem \ref{ultrawick}, we get that
	\[
	E_\mathcal{U} u_N(\xi)u_N(\eta) = W(\xi)W(\eta)
	\]
	Now since summing over all possible $A$, $B$, and $g$ such that $|A| = |B| = j$ is the same as summing over all elements of $P_{1, 2}^{j, k}(n)$, we see that the right hand side is equal to 
	\[
	\sum_{\rho \in P_{1, 2}^{j, k}(n)} q^{\iota'(\rho)} \prod_{\{\ell_1, \ell_2\} \in \rho} \langle h_{\ell_1}, h_{\ell_2} \rangle W(\xi_{n-k, \rho})W(\xi_\rho^k)
	\]
	Therefore, to prove the lemma, we must only check that $\iota'(\rho) = \iota(A) + \iota(B) + \iota(\sigma) + { j \choose 2}$. Recall that $A$ and $B$ are subsets of $\{1, \ldots, n-k\}$ and $\{n-k+1, \ldots, n\}$ respectively, and that $|A| = |B| = j$. Recall also that $\iota(A)$ and $\iota(B)$ are given by associating $A$ and $B$ to cosets in $S_{n-k}/S_{n-k+j} \times S_j$ and $S_k/S_j \times S_{k-j}$ respectively as in Observation \ref{rnk}. The permutation $\sigma \in S_j$ identifies how to pair elements of $A$ with elements of $B$ so that we may associate these three data with an element of $P_{1, 2}^{j, k}(n)$. Therefore, for $j=0$, we have

	\[
	\iota'(\rho_0) = \iota(\emptyset) + \iota(\emptyset) + \iota(\sigma_\emptyset) + {0 \choose 2} = 0
	\]
	where $\rho_0$ is the element of $P_{1, 2}^{k}$ containing no pairs. 
	
	Now let $\rho_j$ and $\rho_{j-1}$ be such that $\rho_j \setminus \rho_{j-1} = \{\ell_1, \ell_2\}$ where if $\{k_1, k_2\} \in \rho_{j-1}$, $\ell_1 < k_1$. Let $c_1 =\iota(\rho_j) - \iota(\rho_{j-1})$, and $c_2 = \iota'(\rho) - \iota'(\rho_{j-1})$. By our inductive hypothesis, we assume
	
	\[
	\iota'(\rho_{j-1}) = \iota(A_{j-1}) + \iota(B_{j-1}) + \iota(\sigma_{j-1}) + {j-1 \choose 2},
	\]
	where $A_{j-1}$, $B_{j-1}$, and $\sigma_{j-1}$ are associated to $\rho_{j-1}$ as described above. Let $A_j$, $B_j$, and $\sigma_j$ be associated to $\rho_j$ similarly. Then we have
	\begin{align*}	
		&\iota(A_j) + \iota(B_j) - \iota(A_{j-1}) - \iota(B_{j-1}) \\
		&= |\{\{m\} \in \rho_j : \ell_1 < m < \ell_2\}| + |\{ \{k_1, k_2\} \in \rho_j: \ell_1 < k_1 < \ell_2 < k_2\}|\\
		&=c_1 - (j - 1 - \frac{c_2 - c_1}{2}),
		\end{align*}
	since $\ell_1$ and $\ell_2$ must cross all of the singletons $\{m\}$ such that $\ell_1 < m < \ell_2$. However we subtract the term $j-1 - \frac{c_2 - c_1}{2}$ since these are the pairs $\{k_1, k_2\}$ such that $k_1 < \ell_2 < k_2$ which had to cross the singleton $\ell_2$ in $\iota(B_{j-1})$. Since $\frac{c_2 - c_1}{2}$ is the number of pairs $\{k_1, k_2\} \in \rho_j$ such that $\ell_1 < k_1 < k_2 < \ell_2$, and there are $j-1$ pairs in $\rho_{j-1}$, we get that there are $j-1 - \frac{c_2 - c_1}{2}$ such pairs. For the permutations, we get
	\[
	\iota(\sigma_j) - \iota(\sigma_{j-1}) = \frac{c_2 - c_1}{2}
	\]
	since we must multiply the element $(1, \frac{c_2 - c_1}{2}) \in S_j$ onto $\sigma_{j-1}$ (where $\sigma_{j-1}$ is viewed as an element of the subgroup $S_1 \times S_{j-1}$). Since $\iota$ is multiplicative on $S_j$, we get the equality above. Therefore
		\begin{align*}
		&\iota(A_j) + \iota(B_j) + \iota(\sigma_j) + {j \choose 2} - \iota'(\rho_{j-1}) \\
		&= c_1 - \left(j - 1 - \left(\frac{c_2 - c_1}{2}\right)\right) + \frac{c_2 - c_1}{2} + {j \choose 2} - {j-1 \choose 2}\\
		&= c_1 + c_2 - c_1 - (j-1) + j-1 = c_2
	\end{align*}
	which finishes the proof.
	\end{proof}
Now it is time for an example which clarifies this inductive step.
	\begin{exam}
		Let 
		\[
		\rho_2 = \{\{1\}, \{2, 5\}, \{3\}, \{4, 7\}, \{6\}, \{8\}\},
		\]
		and
		\[
		\rho_3 = \{\{1, 6\}, \{2, 5\}, \{3\}, \{4, 7\}, \{8\}\}.
		\]
		Then for $\rho_2$, $A_2 = \{2, 4\}$, $B_2 = \{5, 7\}$, and $\sigma_2 = 1$, as shown in Figure \ref{fig:rearrangeold}.  

		\begin{figure}[h!]
			\begin{center}
				\includegraphics[scale = 0.75]{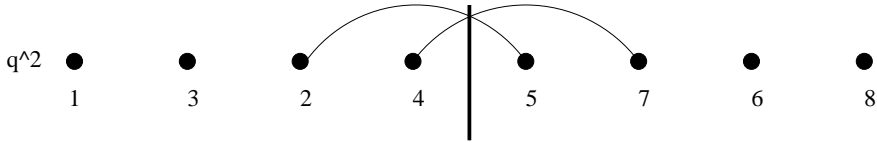}
			\end{center}
			\caption{$\iota(A_2) = 1$, $\iota(B_2) = 1$, $\iota(\sigma_2) = 0$}
			\label{fig:rearrangeold}
		\end{figure}

		For $\rho_3$, $A_3 = \{1, 2, 4\}$, $B_3 = \{5, 6, 7\}$, so after arranging $A$ and $B$ as seen in Figure \ref{fig:rearrangenew}.

		\begin{figure}[h!]
			\begin{center}
				\includegraphics[scale = 0.75]{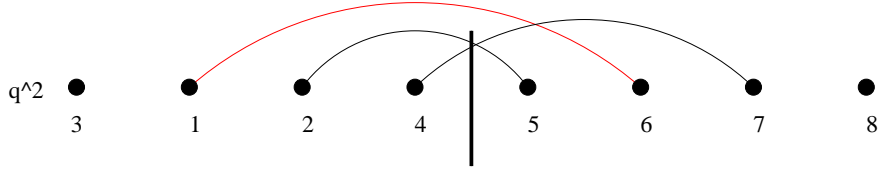}
			\end{center}
			\caption{$\iota(A_3) = 2$, $\iota(B_3) = 0$}
			\label{fig:rearrangenew}
		\end{figure}
		
		Now we must apply the transposition $\sigma_3 = (5, 6)$, and the result can be seen in Figure \ref{fig:sigmanew}. 
		
		\begin{figure}[h!]
			\begin{center}
				\includegraphics[scale=0.75]{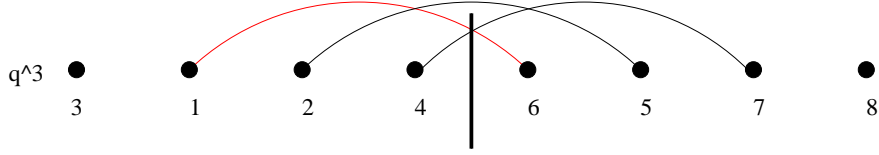}
			\end{center}
			\caption{$\iota(\sigma_3) = 1$}
			\label{fig:sigmanew}
		\end{figure}

		Adding ${2 \choose 2} = 1$ to $\iota(A_2) +\iota(B_2) + \iota(1)$, we get $\iota'(\rho_2) = 3$, and adding ${3 \choose 2} = 3$ to $\iota(A_3) + \iota(B_3) + \iota(\sigma_3)$, we get $\iota'(\sigma_3) = 6$. 		
	\end{exam}

\begin{prop}
	We have for $\beta_{n,k}$ and $w_{n,k}^j$ as above
	\[
	\beta_{n, k} = \sum_{j = 0}^{k \vee n-k} (-1)^j q^{ {j \choose 2} }w_{n, k}^j
	\]
	\label{big}
\end{prop}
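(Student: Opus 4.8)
The plan is as follows. Both $\beta_{n,k}$ and $\sum_{j=0}^{k\vee(n-k)}(-1)^j q^{\binom{j}{2}}w_{n,k}^j$ are bounded linear maps on $\bar{H}_r^{\otimes n-k}\otimes_h H_c^{\otimes k}$, so by density of the elementary tensors it is enough to check the identity on $\xi = (h_1\otimes\cdots\otimes h_{n-k})\otimes_h(h_{n-k+1}\otimes\cdots\otimes h_n)$; write $\xi_{n-k}=h_1\otimes\cdots\otimes h_{n-k}$ and $\xi^k=h_{n-k+1}\otimes\cdots\otimes h_n$. I would realize both sides inside the ultraproduct $\mathcal{N}_\mathcal{U}$. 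On the left, $\beta_{n,k}(\xi)=W(h_1\otimes\cdots\otimes h_n)$, which by Lemma \ref{ultrawick}(2) is the image under $E_\mathcal{U}$ of the class of $N^{-n/2}\sum_{i_1\neq\cdots\neq i_n}s_{i_1}(h_1)\cdots s_{i_n}(h_n)$. On the right, Lemma \ref{wnkj} gives $q^{\binom{j}{2}}w_{n,k}^j(\xi)=E_\mathcal{U}\big(E_1\cdots E_j(z_j(u_N(\xi_{n-k}))\,y_j(u_N(\xi^k)))\big)$, so, since $E_\mathcal{U}$ is normal and linear and only finitely many $j$ occur,
\[
\sum_{j=0}^{k\vee(n-k)}(-1)^j q^{\binom{j}{2}}w_{n,k}^j(\xi)=E_\mathcal{U}\Big(\Big(\sum_{j=0}^{k\vee(n-k)}(-1)^j E_1\cdots E_j\big(z_j(u_N(\xi_{n-k}))\,y_j(u_N(\xi^k))\big)\Big)^\bullet\Big).
\]
Thus it suffices to show that the two representing sequences coincide in $\mathcal{N}_\mathcal{U}$, i.e. differ by an element of $I_\mathcal{U}$.

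The heart of the argument is a $q$-deformed inclusion--exclusion. The unrestricted ``split product'' $u_N(\xi_{n-k})\,u_N(\xi^k)$ is a sum over index assignments that are injective within the first block and within the second block; the element $N^{-n/2}\sum_{i_1\neq\cdots\neq i_n}s_{i_1}(h_1)\cdots s_{i_n}(h_n)$ is exactly the sub-sum over those assignments having, in addition, \emph{no} coincidence between a first-block and a second-block index. Since the coincidences of any given assignment form a partial matching between the two blocks, ordinary inclusion--exclusion rewrites this sub-sum as an alternating sum, over partial matchings $S$, of the sub-sum of $u_N(\xi_{n-k})u_N(\xi^k)$ in which the coincidences recorded by $S$ are imposed. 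Grouping by $j=|S|$, one identifies --- this is the computation in the proof of Lemma \ref{wnkj}, read in the opposite direction --- the sum over matchings of size $j$ with $E_1\cdots E_j\big(z_j(u_N(\xi_{n-k}))\,y_j(u_N(\xi^k))\big)$, modulo $I_\mathcal{U}$: the colourings $z_j,y_j$ together with the conditional expectations $E_1,\ldots,E_j$ are precisely the device that forces each of the $j$ colour-matched cross pairs to contract, pulling out the scalar $\langle h_{\ell_1},h_{\ell_2}\rangle$ and the correct power $q^{\iota'(\rho)}$, the factor $N^{-j}$ being absorbed by the $\sim N^j$ choices of the matched index; and the gap between ``forcing an index coincidence'' and ``performing the contraction'' is a sum of Wick words whose $L^2$-norm is $O(N^{-1/2})$, hence negligible in $E_\mathcal{U}$ by the moment estimate used in Lemmas \ref{ultrawick} and \ref{wnkj}. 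Summing over $j$ with the signs and applying $E_\mathcal{U}$ then yields the identity. (Alternatively, one may avoid the ultraproduct and instead pair both sides against $W(f_1\otimes\cdots\otimes f_m)$, compute using Theorem \ref{moment} and Lemma \ref{threeprod}, and carry out the same inclusion--exclusion on the resulting partitions; the ultraproduct route seems cleaner because Lemma \ref{wnkj} already packages the bookkeeping.)

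The step I expect to be the main obstacle is precisely this $q$-power accounting: one must verify that the prefactor $q^{\binom{j}{2}}$ is exactly what is needed for the alternating sum to telescope down to the ``no cross-contraction'' term, and, at the level of a single matching, that the recursively defined crossing number $\iota'(\rho)$ equals $\iota(A)+\iota(B)+\iota(\sigma)+\binom{j}{2}$ as the matching is built up one pair at a time --- the identity already checked at the end of the proof of Lemma \ref{wnkj} --- and then that these fit together inside the signed sum. Once the coincidence of the two representing sequences modulo $I_\mathcal{U}$ is established, applying $E_\mathcal{U}$ gives $\beta_{n,k}(\xi)=\sum_{j=0}^{k\vee(n-k)}(-1)^j q^{\binom{j}{2}}w_{n,k}^j(\xi)$, and the proposition follows.
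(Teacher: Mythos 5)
Your overall strategy is close in spirit to the paper's: both proofs realize both sides of the identity inside the ultraproduct $\mathcal{N}_\mathcal{U}$, use Lemma \ref{ultrawick}(2) to identify $\beta_{n,k}(\xi)$ with the image of the fully-distinct-index sum, and use Lemma \ref{wnkj} to handle the terms $q^{\binom{j}{2}}w_{n,k}^j$. But you organize the combinatorics at a different point in the argument than the paper does, and there is a genuine gap at exactly the step you flag as "the main obstacle."

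The paper does \emph{not} perform the inclusion--exclusion at the level of index assignments. After applying $E_\mathcal{U}$, it rewrites each $q^{\binom{j}{2}}w_{n,k}^j$ as $\sum_{\rho \in P_{1,2}^{j,k}(n)} q^{\iota'(\rho)}\prod\langle h_{\ell_1},h_{\ell_2}\rangle\, W(\xi_{n-k,\rho})W(\xi_\rho^k)$ and then expands each product $W(\xi_{n-k,\rho})W(\xi_\rho^k)$ once more via Lemma \ref{threeprod}. The result is a double sum over a pair $(\rho,\sigma)$ of partial matchings with weights $q^{\iota'(\rho)+\iota(\sigma)}$; reorganizing by the union $\pi=\rho\cup\sigma$ and by $m=|\pi|$, the proof reduces to the combinatorial Claim that for every fixed $\pi$ with $m\ge1$ pairs, $\sum_{j=0}^m (-1)^j \sum_{\rho\cup\sigma=\pi,\,|\rho|=j} q^{\iota'(\rho)+\iota'(\sigma)} = 0$, established by a careful telescoping induction on $m$. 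This Claim is the genuine combinatorial content of the proof, and it is precisely the mechanism by which the alternating sum collapses to $W(\xi)$.

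Your sketch has no counterpart to this Claim. Instead you assert that the coincidence sub-sum $\sum_{|S|=j}(\cdots)$ agrees with $E_1\cdots E_j\bigl(z_j(u_N(\xi_{n-k}))y_j(u_N(\xi^k))\bigr)$ modulo $I_\mathcal{U}$, and you say this is "the computation in the proof of Lemma \ref{wnkj}, read in the opposite direction." But that is not what the proof of Lemma \ref{wnkj} establishes: that proof computes $E_\mathcal{U}\bigl(E_1\cdots E_j(z_jy_j)\bigr)$ directly, by successively applying the colour conditional expectations and tracking the $q$-power via the function $\iota'$. It says nothing about the sub-sum of $u_N(\xi_{n-k})u_N(\xi^k)$ in which cross-block index coincidences are imposed. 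To make your argument rigorous one would need a separate lemma of the form: for each fixed matching $S$ of size $j$, the sub-sum with coincidences $S$ imposed has $E_\mathcal{U}$-image $q^{c(S)}\prod\langle\cdot\rangle\,W(\cdots)W(\cdots)$ with $c(S)$ exactly matching $\iota'(\rho_S)$ (up to the factor $\binom{j}{2}$). That accounting is nontrivial---the contraction of a forced coincidence in a product of $s$-operators passes other operators and generates $q$-powers whose bookkeeping is as delicate as the paper's Claim---and it is not supplied by your sketch. You identify this as the obstacle but do not overcome it, and the identity $\iota'(\rho)=\iota(A)+\iota(B)+\iota(\sigma)+\binom{j}{2}$ (which \emph{is} in the proof of Lemma \ref{wnkj}) only rewrites $\iota'$; it does not tell you what $q$-power the coincidence sub-sum carries. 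So the proposal is a plausible reorganization of the argument, but as written it has a gap where the paper's Claim sits.
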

\begin{note}
	Here we are defining $w_{n, k}^0 := v_{n, k}$.
\end{note}
	\begin{proof}
	Let $\xi = h_1 \otimes \cdots \otimes h_n$, and let $\xi_{n-k}$  and $\xi^k $ be as before. From Lemma \ref{threeprod}, we get that 
	\[
	W(\xi_k)W(\xi^{n-k}) = \sum_{\sigma \in P_{1,2}^k(n)} q^{\iota(\sigma)} \prod_{\{i, j\}\in \sigma} \langle h_i, h_j \rangle W(\xi)_\sigma
	\]
	We can see this by examining $\langle W(\eta), W(\xi_k)W(\xi^{n-k} \rangle = \tau(W(\eta)^\ast W(\xi_k)W(\xi^{n-k}))$ with the formula from Lemma \ref{threeprod}. 
	Recall from Lemma \ref{ultrawick} that 
	\[
	W(h_1 \otimes \ldots \otimes h_n) = E_\mathcal{U} ((N^{-n/2} \sum_{j_1\neq \cdots \neq j_n } s_{j_1}(h_1) \cdots s_{j_n}(h_n))^\bullet)
	\]

	For $\xi = h_1 \otimes \cdots \otimes h_n$, let 
	\[
	u_N(\xi) = N^{-\frac{n}{2}} \sum_{\alpha} s_{\alpha_1}(h_1) \cdots s_{\alpha_n}(h_n)
	\] 
	From Lemma \ref{wnkj}, we have that 
	\begin{align*}
		\sum_{j=0}^{k\vee n-k} (-1)^j q^{ {j \choose 2}} w_{n,k}^j(\xi_{n-k} \otimes \xi^k) &= \sum_{j=0}^{k \vee n-k} (-1)^j E_\mathcal{U} E_1 \cdots E_j\left(z_j(u_N(\xi_{n-k}))y_j(u_N(\xi^k))\right)\\
		&= \sum_{j=0}^{k \vee n-k} (-1)^j \sum_{\rho \in P_{1, 2}^{j, k}(n)} q^{\iota'(\rho)} \prod_{\{\ell_1, \ell_2\} \in \rho} \langle h_{\ell_1}, h_{\ell_2} \rangle E_\mathcal{U}\left(u_N(\xi_{n-k, \rho})u_N(\xi_\rho^k)\right)
	\end{align*}
	Now from Theorem \ref{ultrawick} and Lemma \ref{threeprod}, we get that
	\begin{align*}
		&\sum_{j=0}^{k \vee n-k} (-1)^j \sum_{\rho \in P_{1, 2}^{j,k}(n)} q^{\iota'(\rho)} \prod_{\{\ell_1, \ell_2\} \in \rho} \langle h_{\ell_1}, h_{\ell_2} \rangle \sum_{j' = 0}^{k-j \vee n-k-j}\sum_{\sigma \in P_{1, 2}^{j', k-j}} q^{\iota(\sigma)} \prod_{\{\ell_1, \ell_2\} \in \sigma} \langle h_{\ell_1}, h_{\ell_2} \rangle W( (\xi_{\rho})_{ \sigma})\\
		&= \sum_{j=0}^{k \vee n-k} (-1)^j \sum_{j' = 0}^{k-j \vee n-k-j} \sum_{\rho \in P_{1, 2}^{j, k}(n)} \sum_{\sigma \in P_{1, 2}^{j', k-j}(n-2j)} q^{\iota'(\rho) + \iota(\sigma)} \prod_{\{\ell_1, \ell_2\} \in \rho \cup \sigma} \langle h_{\ell_1}, h_{\ell_2} \rangle W(\xi_{\rho \cup \sigma})\\
		&= \sum_{j=0}^{k \vee n-k} (-1)^j \sum_{m = j}^{k \vee n-k} \sum_{\rho \in P_{1,2}^{j, k}(n)} \sum_{\sigma \in P_{1, 2}^{m-j, k-j}(n-2j)} q^{\iota'(\rho) + \iota(\sigma)} \prod_{\{\ell_1, \ell_2\} \in \rho \cup \sigma} \langle h_{\ell_1},  h_{\ell_2} \rangle W(\xi_{\rho \cup \sigma})\\
		&= \sum_{m = 0}^{k \vee n-k} \sum_{j= 0}^m (-1)^j \sum_{\rho \in P_{1, 2}^{j, k}(n)} \sum_{\sigma \in P_{1, 2}^{m-j, k-j}(n-2j)} q^{\iota'(\rho) + \iota(\sigma)} \prod_{\{\ell_1, \ell_2\} \in \rho \cup \sigma} \langle h_{\ell_1}, h_{\ell_2} \rangle W(\xi_{\rho \cup \sigma})\\
		&= \sum_{m=0}^{k \vee n-k} \sum_{\pi \in P_{1,2}^{m, k}(n)} \sum_{j = 0}^m (-1)^j \sum_{\substack{\rho \in P_{1, 2}^{j, k}(n)\\ \sigma \in P_{1,2}^{m-j,k}(n-2j)\\ \rho \cup \sigma = \pi}} q^{\iota'(\rho) + \iota'(\sigma)} \prod_{\{\ell_1,\ell_2\} \in \pi}\langle h_{\ell_1}, h_{\ell_2} \rangle W(\xi_\pi),
	\end{align*}
	where $\rho \cup \sigma \in P_{1, 2}^k(n)$ has pairs from both $\rho$ and $\sigma$. From here, it is clear that for $m =0$, we simply have $W(\xi)$. Therefore, the following claim finishes the proof. 
	\begin{claim}
		For $m \geq 1$,
		\[
		\sum_{j = 0}^m (-1)^j \sum_{\substack{\rho \in P_{1, 2}^{j, k}(n)\\ \sigma \in P_{1,2}^{m-j,k}(n-2j)\\ \rho \cup \sigma = \pi}} q^{\iota'(\rho) + \iota'(\sigma)}  = 0
		\]
	\end{claim}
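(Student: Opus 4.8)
Fix a partition $\pi\in P_{1,2}^{m,k}(n)$ with $m\ge 1$ and pair set $\mathcal P=\{P_1,\dots,P_m\}$. Every decomposition $\pi=\rho\cup\sigma$ occurring in the sum is recorded by the set $T\subseteq\mathcal P$ of pairs it assigns to $\rho$: then $\rho=\pi|_T$ is the sub‑partition with pairs exactly $T$ (all other points singletons), $|T|=j$, and $\sigma=\pi/T$ is $\pi$ with the endpoints of the pairs in $T$ deleted and the ground set relabelled order‑isomorphically, so $\sigma$ has pairs $\mathcal P\setminus T$ and, by Lemma \ref{threeprod}, acquires the weight $q^{\iota(\sigma)}$ with $\iota$ the crossing number (the $\sigma$‑weight as in the two displays immediately preceding the Claim). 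The plan is thus to establish
\[
\sum_{T\subseteq\mathcal P}(-1)^{|T|}\,q^{\,\iota'(\pi|_T)+\iota(\pi/T)}=0
\]
by showing that the exponent equals a constant depending only on $\pi$ plus independent per‑pair contributions over $T$, so that the sum factors and one factor vanishes.

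First I would record a closed form for $\iota'$. Running the recursion that defines $\iota'$ one pair at a time — inserting pairs in decreasing order of left endpoint, as prescribed — the contribution of a pair $P=\{\ell_1,\ell_2\}$ of a partition $\tau$ comes out to $s_\tau(P)+\mathrm{cl}_\tau(P)+2\,\mathrm{nst}_\tau(P)$, where $s_\tau(P)$ counts singletons of $\tau$ strictly between $\ell_1$ and $\ell_2$, $\mathrm{cl}_\tau(P)$ counts pairs of $\tau$ that cross $P$ and have left endpoint $<\ell_1$, and $\mathrm{nst}_\tau(P)$ counts pairs of $\tau$ strictly nested inside $P$. Summing over $P$ and comparing with the definition of $\iota$ gives
\[
\iota'(\tau)=\iota(\tau)+2\,\mathrm{nest}(\tau),
\]
$\mathrm{nest}(\tau)$ being the number of unordered pairs of pairs of $\tau$ one nested inside the other (consistent with Figure \ref{fig:partition}: $6=4+2\cdot1$).

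Next I would apply the per‑pair formula to $\pi|_T$ and track each structural incidence of $\pi$ according to which side of $T$ it lies on. A (permanent singleton, containing pair) incidence of $\pi$ contributes exactly $1$ to $\iota'(\pi|_T)+\iota(\pi/T)$ for every $T$ (from $\pi|_T$ if that pair is in $T$, from $\pi/T$ otherwise); a crossing pair of pairs of $\pi$ contributes exactly $1$ for every $T$ (from $\mathrm{cl}$ in $\pi|_T$ if both lie in $T$, from $\pi/T$ if both lie outside $T$, and from the endpoint of the outside pair that falls inside the inside pair in the split case); and an ordered incidence $(P,P')$ with $P'$ strictly nested in $P$ contributes $2$ exactly when $P\in T$ — via $2\,\mathrm{nst}_{\pi|_T}(P)$ if also $P'\in T$, and via the two endpoints of $P'$ becoming singletons inside $P$ if $P'\notin T$ — and $0$ otherwise (it is neither a crossing nor a singleton incidence, and $\pi/T$ records only crossings). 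Hence
\[
\iota'(\pi|_T)+\iota(\pi/T)=c(\pi)+2\sum_{P\in T}\mathrm{nst}(P),
\]
where $c(\pi)$ counts the singleton–pair incidences plus the crossing pairs of pairs of $\pi$ and $\mathrm{nst}(P)=\#\{P'\in\mathcal P:\ P'\text{ strictly nested inside }P\}$. Therefore
\[
\sum_{T\subseteq\mathcal P}(-1)^{|T|}\,q^{\,\iota'(\pi|_T)+\iota(\pi/T)}=q^{\,c(\pi)}\prod_{P\in\mathcal P}\bigl(1-q^{\,2\,\mathrm{nst}(P)}\bigr),
\]
and since $m\ge1$ the nesting partial order on $\mathcal P$ has a minimal element $P_0$ — a pair with no pair of $\pi$ nested inside it — so $\mathrm{nst}(P_0)=0$ and the factor $1-q^{0}=0$ kills the product. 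Equivalently, $T\mapsto T\,\triangle\,\{P_0\}$ is a sign‑reversing involution leaving the exponent fixed, so the terms cancel in pairs.

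The main obstacle is the incidence count in the third paragraph: $\iota'$ is genuinely order‑sensitive — whether a given point counts as a singleton or as a pair endpoint depends on which pairs have already been inserted — so one must verify case by case, over the relative position of $P$ and $P'$ (nested either way, crossing either orientation) and over their membership in $T$, that each unit of $\iota'(\pi|_T)+\iota(\pi/T)$ is attributed to a well‑defined incidence of $\pi$ with the correct multiplicity on exactly one of the two sides. The identity $\iota'=\iota+2\,\mathrm{nest}$ is the lever that makes this feasible: it replaces the recursive definition of $\iota'$ by a static count, after which the only surviving $T$‑dependent term is $2\sum_{P\in T}\mathrm{nst}(P)$ and the alternating sum collapses.
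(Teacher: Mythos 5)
Your proof is correct, but it takes a genuinely different route from the paper's. First, on the reading: you interpret the $\sigma$-weight as the ordinary crossing number $\iota(\sigma)$ coming from the displays preceding the Claim rather than the $\iota'(\sigma)$ literally printed in it; that is indeed the statement the surrounding computation needs and the one the paper's own base case uses (with $\iota'(\sigma)$ the identity already fails for $\pi=\{\{1,4\},\{2,3\}\}$, where the alternating sum is $q^{2}-1$), so you are proving the intended claim. The paper argues by induction on the number of pairs of $\pi$: it peels off the pair with the smallest left endpoint and asserts that the terms placing it in $\sigma$ sum to $q^{c_1}S_{\pi_{m-1}}$ and those placing it in $\rho$ to $\pm q^{c_2}S_{\pi_{m-1}}$, with $c_1=\iota(\pi_m)-\iota(\pi_{m-1})$ and $c_2=\iota'(\pi_m)-\iota'(\pi_{m-1})$ independent of how the remaining pairs are split; that independence is left implicit, and it is exactly what your analysis makes explicit (no pair can nest inside the outermost pair, so only $\mathrm{nst}(P_{\mathrm{new}})$ enters, and only when $P_{\mathrm{new}}$ goes to $\rho$). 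Your route instead replaces the recursive definition of $\iota'$ by the static identity $\iota'=\iota+2\,\mathrm{nest}$, shows the exponent equals $\iota(\pi)+2\sum_{P\in T}\mathrm{nst}_\pi(P)$ (note your $c(\pi)$ is precisely $\iota(\pi)$), and obtains the stronger product formula $\sum_{T}(-1)^{|T|}q^{\iota'(\pi|_T)+\iota(\pi/T)}=q^{\iota(\pi)}\prod_{P}\bigl(1-q^{2\,\mathrm{nst}(P)}\bigr)$, which vanishes because an innermost pair has $\mathrm{nst}=0$; equivalently, toggling that pair is a sign-reversing, exponent-preserving involution. What each buys: the paper's induction is shorter on the page but hides the key combinatorial invariance; yours needs the case-by-case incidence bookkeeping (which checks out: singleton-in-pair and crossing incidences each contribute $1$ for every $T$, nested incidences contribute $2$ exactly when the outer pair lies in $T$) but yields a closed form, identifies exactly which toggle produces the cancellation, and incidentally certifies the paper's unproved step. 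One wording quibble: in the split crossing case, "the endpoint of the outside pair that falls inside the inside pair" is garbled, since crossing pairs are neither inside nor outside one another; what you mean, and what is true, is that the pair not in $T$ contributes exactly one of its endpoints as a singleton of $\pi|_T$ lying inside the span of the pair in $T$.
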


	\begin{proof}[Proof of Claim]
		We proceed by induction. For $m=1$, fix $\pi \in P_{1,2}^{1, k}(n)$. We have
		\[
		q^{\iota'(\pi)} - q^{\iota(\pi)} = 0
		\]
		since $\iota'(\pi) = \iota(\pi)$ for $\pi \in P_{1,2}^{1,k}(n)$. Let $\pi_m \in P_{1, 2}^{m, k}(n)$, and let $\pi_{m-1} \in P_{1,2}^{m-1, k}(n)$ be such that $\pi_m \setminus \pi_{m-1} = \{\ell_1, \ell_2\}$ where $\ell_1 < \ell_1'$ for all pairs $\{\ell_1', \ell_2'\} \in \pi_{m-1}$. By our inductive hypothesis, we have that $S_{\pi_{m-1}} = 0$. However, we have
		\[
		S_{\pi_m} = q^{c_1} S_{\pi_{m-1}} + q^{c_2} S_{\pi_{m-1}} = 0
		\]
		where $c_1 = \iota(\pi_m) - \iota(\pi_{m-1})$ and $c_2 = \iota'(\pi_m) - \iota'(\pi_{m-1})$. The first term comes from $\{\ell_1, \ell_2\} \in \sigma$, and the second term similarly comes from $\{\ell_1, \ell_2\}\in \rho$. This finishes the proof of the claim and the proposition. 
	\end{proof}
	
	\renewcommand{\qedsymbol}{}
	\end{proof}
	\begin{proof}[of Proposition \ref{coordbound}]
		Since $\|w_{n,k}^j\|_{cb} \leq C_q$ by Lemma \ref{little2}, we get from Proposition \ref{big} that 
		\[
		\|\beta_{n,k}\|_{cb} \leq \sum_{j = 0}^{k \vee n-k} |q|^{ {j \choose 2}} \|w_{n,k}^j\|_{cb} < C_q
		\]
	\end{proof}

We are now ready to prove Theorem A.
	\begin{thma}
	For all $-1 < q < 1$ and all $\dim(H) \geq 2$, 
	\begin{enumerate}
			\item $\Gamma_q(H)$ has the weak* completely contractive approximation property.
	\item $\mathcal{A}_q(H)$ has the completely contractive approximation property.

	\end{enumerate}
	\label{CCAP}
\end{thma}

	\begin{proof}
	For (1), we follow Haagerup's standard argument from \cite{haagerup79} except the cb-norm of the projections onto words of length $n$ are bounded by $cn^2$ instead of $cn$. 
	From second quantization, we know that $E = \Gamma_q(P)$ for any projection $P$ is a conditional expectation and $T_t = \Gamma_q(e^{-t}Id)$ is a ucp semigroup and thus $\|E\|_{cb} = \|T_t\|_{cb} = 1$. Furthermore $\|T_t|_{F_n}\| = e^{-nt}$ where $F_n$ is the subspace spanned by the Wick words of degree $n$. We now estimate the cb-norm for $P_n: \Gamma_q(H) \to F_n$.
	From above we know that \[
	\|P_n\|_{cb} \leq \|\Phi_n\|_{cb} \|\Psi_n\|_{cb} \leq C_q n^2.
	\]
	Let $P_{\leq N} = \sum_{n=0}^N P_n$. Finally, we define the following net of maps.
	\[
	U_{\alpha} = T_{t_\alpha} \circ P_{\leq N_\alpha} \circ E_{k_\alpha}
	\]
	Where $E_{k_\alpha} = \Gamma_q(F_{k_\alpha})$ where $F_{k}$ is a sequence of projections of rank $k$ whose union is the identity, and $t_\alpha$ and $N_{\alpha}$ satisfy 
	\[
	\sum_{n= N_\alpha + 1}^\infty C_q e^{-nt} n^2 <  \varepsilon
	\]
	for all $\alpha$, but $n_\alpha \to \infty$ and $t_\alpha \to 0$. Clearly $U_\alpha$ is finite rank, completely bounded with norm less than $1 + \varepsilon$, and converges to the identity in the point-weak* topology. 
	
	For (2), observe that what we have shown is that there exists functions $f_\alpha: \mathbb{N} \to \mathbb{R}$ where $\alpha = (t, m, \varepsilon)$ such that 
	\[
	\sum_{n =N+1}^\infty e^{-t n}n^2 \leq \varepsilon
	\]
	which satisfies the following conditions:
	\begin{enumerate}
		\item The pointwise limit of $f_\alpha$ is $1$.
		\item $f_\alpha$ has finite support for each $n$. 
		\item $\|f_\alpha(N)\|_{cb} \leq 1 + \varepsilon$.
	\end{enumerate}
	Recall that $N$ is the number operator, which generates the semigroup $T_t$. Since $\Gamma_q(H)$ is faithfully represented on $\mathcal{F}_q(H)$, we have that the Wick words linearly generate $\mathcal{A}_q(H)$. For $\xi \in H^{\otimes n}$, we have that
	\[
	f_\alpha(N)W(\xi) = f_\alpha(n) W(\xi).
	\]
	Therefore, $f_\alpha(N)$ converges to the identity in the point-norm topology and $\lim_\alpha \|f_\alpha\|_{cb} = 1$  for appropriately chosen $\alpha$. 
\end{proof}
	\begin{rem}
		We observe that $T_t: \Gamma_q(H) \to \mathcal{A}_q(H)$ since for $x \in \Gamma_q(H)$, we have that
		\[
		T_t(x) = \sum_{n \geq 0} e^{-nt} P_n(x).
		\]
		Clearly $P_n(x) \in \mathcal{A}_q(H)$ for $H$ finite dimensional since the range of $P_n$ is spanned by Wick words. For $H$ infinite dimensional, we get that $P_n(x) \in \mathcal{A}_0(H)$ for all $n$ and $x \in \Gamma_0(H)$. Recall from \cite{nou} that $\|W(\xi)\|_\infty \leq C_q \|W(x)\|_2$. Therefore for all $\varepsilon > 0$,
		\[
		\|T_t(x) - \sum_{n = 0}^M e^{-nt}P_n(x)\|_{} \leq \|\sum_{n> M+1} e^{-nt}P_n(x)\| < \varepsilon \|x\|
		\]
		for $M$ such that $C\sum_{n > M}e^{-nt}n^2 < \varepsilon$. Therefore, $F_M(x) := \sum_{n = 0}^n e^{-nt}P_n(x)$ converges in norm to $T_t(x)$. Hence $T_t(x) \in \mathcal{A}_q(H)$. It is obvious that $\mathcal{A}_q(H)$ has the CCAP since we may then simply apply the projections $P_n$ to an element of the form $T_t(x)$. 
	\end{rem}

\section{Weak Containment}
In this section, we shall show that while $L_{0}^{2}(\Gamma_q(H \oplus H))$ is not obviously weakly contained in the coarse bimodule, there is a subbimodule of $L_{0}^{2}(\Gamma_q(H \oplus H))$ which is weakly contained in the coarse bimodule. Define the following subspaces of $L_{0}^{2}(\Gamma_q(H \oplus H))$.
\begin{align*}
	F_m &= \span\left\{ W(h_1 \otimes  \ldots \otimes h_n)| \exists \iota_1 \ldots \iota_m \in \{1, \ldots, n\}, h_{\iota_k} \in 0 \oplus H \right\}^{\|\cdot \|_{2}}\\
	E_m &= \oplus_{k = 0}^m F_k
\end{align*}
Note that $F_m$ and $E_m$ are $\Gamma_q(H)$-$\Gamma_q(H)$-bimodules simply by the action restricted from $L_{0}^{2}(\Gamma_q(H \oplus H))$. The main result of this section is the following.
\begin{prop}
	Let $m > -\frac{\log(d)}{2 \log(|q|)}$ where $d = \dim(H)$. Then $E_{m-1}^\perp \prec L_{}^{2}(\Gamma_q(H)) \bar{\otimes} L_{}^{2}(\Gamma_q(H))$. 
	\label{weakcont}
\end{prop}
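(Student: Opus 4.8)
First I would reduce the statement to a Hilbert--Schmidt estimate for a single cyclic sub-bimodule. The space $E_{m-1}^\perp$ is the closed linear span of the vectors $W(\xi)\Omega = \xi$, where $\xi = v_1 \otimes \cdots \otimes v_n$ runs over simple tensors with at least $m$ legs $v_j$ lying in $0 \oplus H$; each $\overline{\Gamma_q(H)\,\xi\,\Gamma_q(H)}$ (closure in $L^2$) is a sub-bimodule of $L^2_0(\Gamma_q(H\oplus H))$ contained in $E_{m-1}^\perp$, and these span it densely. Since weak containment into a fixed bimodule passes to arbitrary direct sums and (via the densely defined summation bimodule map) to closed spans of sub-bimodules, it suffices to prove $\overline{\Gamma_q(H)\,\xi\,\Gamma_q(H)} \prec L^2(\Gamma_q(H)) \bar\otimes L^2(\Gamma_q(H))$ for one such $\xi$. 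By Lemma~\ref{weakcontHS}, (1)$\Rightarrow$(3), it is enough to show that the map $T_{\varphi_\xi}$ attached to the coefficient $\varphi_\xi(x \otimes y) = \langle \xi, x\xi y\rangle$ is Hilbert--Schmidt. Unwinding definitions and using $\tau \circ E_1 = \tau$, where $E_1 = \Gamma_q(P_1)$ is the conditional expectation of $\Gamma_q(H \oplus H)$ onto the first copy, one gets $T_{\varphi_\xi}(x) = E_1\bigl(W(\xi)^* x W(\xi)\bigr)$, which by the estimate $\|W(\xi)\|_\infty \le C_q\|\xi\|_2$ is a bounded operator on $L^2(\Gamma_q(H))$ of norm $\le C_q^2\|\xi\|_2^2$; call it $T_\xi$.

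Next I would compute $T_\xi$ on the tensor family $W(\gamma)$, $\gamma = e_{i_1} \otimes \cdots \otimes e_{i_k}$ with $\{e_i\}_{i=1}^d$ an orthonormal basis of $H$, using Wick calculus: write $W(\gamma)W(\xi) = W(W(\gamma)\xi)$ and $W(\xi)^* W(W(\gamma)\xi) = W(W(\xi)^* W(\gamma)\xi)$, and expand each Wick operator into its creation/annihilation strings as in Theorem~\ref{wick} (recall $W(\xi)^*$ is again a Wick word, with the same legs, in $H\oplus H$). The key structural point is that $E_1$ kills every Wick word containing a leg in $0 \oplus H$, while $W(\gamma)$ is built only from operators attached to $H \oplus 0$; so the only surviving terms are those in which \emph{all} $\ell \ge m$ second-copy legs of $\xi$ are annihilated against the (necessarily second-copy) annihilators of $W(\xi)^*$. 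Hence $T_\xi(W(\gamma)) = W(\theta_\gamma)$ for a vector $\theta_\gamma \in \mathcal{F}_q(H)$ which is a sum of $q$-weighted simple tensors assembled from the first-copy legs of $W(\xi)^*$, of $\gamma$, and of $\xi$; since $W \colon \mathcal{F}_q(H) \to L^2(\Gamma_q(H))$ is unitary, $\|T_\xi(W(\gamma))\|_2 = \|\theta_\gamma\|_q$.

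Now the estimate. Only boundedly many legs of $\gamma$ can be used up: at most $n-\ell$ by the annihilation part of $W(\gamma)$ (only $n-\ell$ first-copy legs of $\xi$ are available to match), and at most $n-\ell$ more by the first-copy annihilators of $W(\xi)^*$; so at least $k - 2(n-\ell)$ legs of $\gamma$ survive into $\theta_\gamma$, and since the creation part of $W(\gamma)$ prepends them, they lie to the left of every other leg. Therefore each of the $\ell \ge m$ annihilations of a second-copy leg of $\xi$ must jump over all of the surviving legs of $\gamma$, so every term of $\theta_\gamma$ has $q$-weight of modulus at most $|q|^{m(k - 2(n-\ell))}$; moreover the number of such terms is polynomial in $k$ (only subsets of size $\le n-\ell$ contribute) and each constituent tensor has $\mathcal{F}_q$-norm at most $C_q^{1/2}$. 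Passing from $\{W(\gamma)\}$ to a genuine orthonormal basis of $L^2(\Gamma_q(H))$ by Gram--Schmidt, for which one uses the two-sided bounds $\|P^{(k)}(q)\|, \|P^{(k)}(q)^{-1}\| \le C_q$ on the $q$-Gram matrix of $H^{\otimes k}$, one obtains
\[
\|T_\xi\|_{S_2}^2 \;\lesssim\; \sum_{k \ge 0} d^{\,k}\,\mathrm{poly}(k)\,|q|^{\,2m(k - 2(n-\ell))} \;=\; C_{q,n,\ell}\sum_{k \ge 0}\mathrm{poly}(k)\,\bigl(d\,|q|^{2m}\bigr)^{k},
\]
and the series converges precisely because the hypothesis $m > -\tfrac{\log d}{2\log|q|}$ is equivalent to $d\,|q|^{2m} < 1$. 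Thus $T_\xi \in S_2$, Lemma~\ref{weakcontHS} gives $\overline{\Gamma_q(H)\,\xi\,\Gamma_q(H)} \prec L^2(\Gamma_q(H)) \bar\otimes L^2(\Gamma_q(H))$, and summing over all generating $\xi$ finishes the proof.

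The step I expect to be the main obstacle is the rigorous $q$-power bookkeeping: verifying, through the iterated Wick expansions and the repeated shifts of positions as legs are annihilated, that each surviving leg of $\gamma$ really contributes a factor $|q|^{m}$ rather than merely $|q|$, and bounding uniformly in $k$ both the number of tensors and their $\mathcal{F}_q$-norms occurring in $\theta_\gamma$. This is the analogue, in the presence of the ``barrier'' formed by the $\ell \ge m$ second-copy legs, of the crossing and inversion computations carried out in Lemma~\ref{wnkj} and Proposition~\ref{big}. A minor further point is the comparison between $\{W(\gamma)\}$ and an orthonormal basis, handled by the standard estimates on $P^{(k)}(q)$ and its inverse.
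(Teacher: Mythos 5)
Your proposal follows the same strategy as the paper: reduce via Lemma~\ref{weakcontHS} to showing that $\Phi_{\xi,\xi}(x) = E_1\bigl(W(\xi)^\ast x W(\xi)\bigr)$ is Hilbert--Schmidt, and obtain the Hilbert--Schmidt bound from the observation that the $\ge m$ second-copy legs of $\xi$ must be paired against one another across the surviving legs of the test vector, producing a factor $|q|^{mk}$ for a degree-$k$ test vector, which makes $\sum_k d^k |q|^{2mk}$ converge precisely under $m > -\tfrac{\log d}{2\log|q|}$. The paper does this slightly differently in packaging: it decomposes $E_{m-1}^\perp = \oplus_{k\ge m} F_k$ rather than covering by cyclic sub-bimodules, proves the decay as a four-point moment estimate $|\langle W(\xi)W(\zeta_2), W(\zeta_1)W(\eta)\rangle|\lesssim |q|^{jk}\|\zeta_1\|_2\|\zeta_2\|_2$ (Lemma~\ref{tech2}), and then extracts the Schatten-class conclusion through the abstract banded-operator Lemma~\ref{tech} rather than summing matrix entries directly; one small correction is that the crossing/inversion bookkeeping you flag as the main obstacle lives in Lemma~\ref{tech2} (where the bandedness hypotheses $j\ge 2(n_1+n_2)$, $|i-j|\le n_1+n_2$ exactly formalize your ``boundedly many legs of $\gamma$ get used up''), not in Lemma~\ref{wnkj} or Proposition~\ref{big}, which concern the Wick-product/inclusion--exclusion identity of Section~3 and play no role here. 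Both routes are valid; yours makes the Schur-test flavor of the estimate more explicit, while the paper's isolates the combinatorics cleanly in one lemma and handles the finitely many boundary terms via the $K_1/K_2$ split in Lemma~\ref{tech}.
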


It will turn out that this sub-bimodule, $E_{m-1}^\perp$, will be ``large enough'' to replace $L_{0}^{2}(\Gamma_q(H \oplus H))$ in the proof of strong solidity from \cite{HS} and \cite{OP1}. Throughout this section, we shall denote $\Gamma_q(H)$ by $\mathcal{M}$, $\Gamma_q(H \oplus H)$ by $\widetilde{\mathcal{M}}$, $(h, 0) \in H \oplus H$ by simply $h$, and $(0, h)$ by $\tilde{h}$. Define $\Phi_{\xi, \eta}: L_{}^{p}(\mathcal{M}) \to L_{}^{p}(\mathcal{M})$ by $\Phi_{\xi, \eta}(x) = E_\mathcal{M}(W(\xi)^\ast x W(\eta))$ for $\xi, \eta \in F_k$.

\begin{proof}[Idea of the proof of Proposition \ref{weakcont}]
	Let $\tilde{h}, \tilde{k} \in 0 \oplus H$ and define 
	\[
	\Phi_{h,k}(x) = E_\mathcal{M}(s(\tilde{h})x s(\tilde{k}))
	\]
	as an operator on $L_{}^{2}(\mathcal{M})$. It is straightforward to see that 
	\[
	\Phi_{h,k}(x) = \sum_{n=0}^\infty q^n P_n(x) \langle h, k \rangle 	
	\]
	where $P_n$ is the projection onto Wick products of order $n$ as in the previous section. This map is the same as the map $\Xi_q$ defined in \cite{shlyakh04} up to the factor $\langle h, k \rangle $. Since $\Phi_{h,k}$ is a diagonal operator, its Schatten p-norm can be easily computed by
	\[
	\|\Phi_{h,k}\|_{S_p} = \left( \sum_{n=0}^\infty |q|^{pn} d^n \right)^{\frac{1}{p}}.
	\]
	This sum clearly converges if and only if $|q|^p d < 1$, so $\Phi_{h,k}$ is Schatten $p$-class if and only if $p > -\frac{\log(d)}{\log(|q|)}$. By Lemma \ref{weakcontHS}, we can only prove that a bimodule is weakly contained if $T_\xi$ is Hilbert-Schmidt. With this in mind, we observe that since $\Phi_{h,k}$ is Schatten p-class for $p > - \frac{\log(d)}{\log(|q|)}$, $\Phi_{h,k}^m$ is Hilbert-Schmidt for $m > - \frac{\log(d)}{2\log(|q|)}$. For  $\xi, \eta \in E_{m-1}^\perp$, 
	\[
	\Phi_{\xi, \eta}(x) := E_\mathcal{M}(W(\xi)^\ast x W(\eta))
	\]
	is an $N$-fold product of $\Phi_{h_j,k_i}$ as well as some bounded operators for $N > m$ and thus is Hilbert-Schmidt. The result then follows from Lemma \ref{weakcontHS}.
\end{proof}

\begin{lemma}
	If $\xi, \eta \in F_k$, then $\Phi_{\xi, \eta}: L^2(\mathcal{M}) \to L^2(\mathcal{M})$ is Schatten $p$-class for $p > -\frac{\log(d)}{k\log(|q|)}$. In particular $\Phi_{\xi, \eta}$ is Hilbert-Schmidt for $k \geq -\frac{\log(d)}{2 \log(|q|)}$. 	\label{tens}
\end{lemma}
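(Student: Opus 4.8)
The plan is to estimate the Schatten norm of $\Phi_{\xi,\eta}$ by reducing it, through an inclusion--exclusion expansion, to a composition of the rank-one building blocks $\Phi_{h,k}$ already considered in the sketch above, and then to invoke Hölder's inequality for Schatten classes. First I would make the computation behind that sketch precise: for $\tilde h,\tilde k\in 0\oplus H$ the map $\Phi_{h,k}(x)=E_{\mathcal{M}}(s(\tilde h)\,x\,s(\tilde k))$ on $L^2(\mathcal{M})$ equals $\langle h,k\rangle\sum_{n\ge0}q^nP_n$, where $P_n$ is the projection onto degree-$n$ Wick products from Section~3. Indeed, for $x=W(\mu)$ with $\mu\in H^{\otimes n}$ one gets $W(\mu)\,s(\tilde k)\,\Omega=\mu\otimes\tilde k$ since every annihilation leg of $W(\mu)$ kills $\tilde k$ by the orthogonality $H\perp 0\oplus H$, and then $s(\tilde h)(\mu\otimes\tilde k)$ has its creation part annihilated by $E_{\mathcal{M}}$ while its annihilation part forces $l^\ast(\tilde h)$ to hop over all $n$ legs of $\mu$ before meeting $\tilde k$, producing $q^n\langle h,k\rangle\mu$; as $E_{\mathcal{M}}(\cdots)\in\mathcal{M}$ is the Wick product of its image of $\Omega$, this gives $\Phi_{h,k}(W(\mu))=q^n\langle h,k\rangle W(\mu)$. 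Since $\dim(H)=d<\infty$, $P_n$ has rank $d^n$, so the singular values of $\Phi_{h,k}$ are $|\langle h,k\rangle|\,|q|^n$ with multiplicity $d^n$, whence $\|\Phi_{h,k}\|_{S_r}^r=|\langle h,k\rangle|^r\sum_{n\ge0}d^n|q|^{rn}<\infty$ exactly when $r>p_0:=-\log d/\log|q|$.

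Next, by bilinearity and $\|\cdot\|_2$-density it suffices to treat $\xi=c_1\otimes\cdots\otimes c_{N_1}$ and $\eta=d_1\otimes\cdots\otimes d_{N_2}$, simple tensors with at least $k$ of the $c_i$ (resp. $d_j$) lying in $0\oplus H$. I would expand $\Phi_{\xi,\eta}(W(\mu))=E_{\mathcal{M}}\bigl(W(\xi)^\ast W(\mu)W(\eta)\bigr)$ using the iterated Wick-product formula (Lemma~\ref{threeprod} and its analogue for more factors): each term is a power of $q$ times a product of inner products times $W(\text{leftover})$, the leftover being the ordered string of uncontracted legs. For a term to survive $E_{\mathcal{M}}$ the leftover must have no leg in $0\oplus H$, so every leg of $\xi$ or $\eta$ in $0\oplus H$ must be contracted; as such a leg has zero inner product against every leg of $\mu$ and legs within a single Wick word are never contracted against each other, these legs are matched in pairs, one from $\xi$ against one from $\eta$. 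Hence every surviving term carries some number $\ell\ge k$ of such ``tilde--tilde'' contractions. Peeling these off one at a time, and absorbing — by the same $q$-crossing inclusion--exclusion bookkeeping as in Lemma~\ref{wnkj} — the leftover $q$-powers, the contractions among $H$-legs, and the outer left/right multiplications by Wick words (all bounded on $L^2(\mathcal{M})$ since $\|W(\zeta)\|_\infty\le C_q\|\zeta\|_2$) into bounded, i.e.\ $S_\infty$, maps $T_j$, I would arrive at a finite decomposition
\[
\Phi_{\xi,\eta}=\sum\;(\text{scalar})\;T_0\,\Phi_{h_1,k_1}\,T_1\,\Phi_{h_2,k_2}\cdots\Phi_{h_\ell,k_\ell}\,T_\ell,\qquad \ell\ge k .
\]

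Finally, fix $p>p_0/k$. For a term as above set $r=\ell p$; then $r\ge kp>p_0$, so each $\Phi_{h_i,k_i}\in S_r$ by the first step, and since $\tfrac1p=\sum_{i=1}^\ell\tfrac1r$ while the $T_j$ are bounded ($S_\infty$), Hölder's inequality for Schatten classes gives $\|T_0\Phi_{h_1,k_1}T_1\cdots\Phi_{h_\ell,k_\ell}T_\ell\|_{S_p}<\infty$; summing the finitely many terms yields $\Phi_{\xi,\eta}\in S_p$ for every $p>p_0/k=-\log d/(k\log|q|)$, and the case $p=2$ is the asserted Hilbert--Schmidt range. The genuinely technical point — and the step I expect to be the main obstacle — is the decomposition in the previous paragraph: verifying that after summing over all contraction schemes the position-dependent $q$-powers telescope so that each tilde--tilde contraction acts exactly as the diagonal operator $\Phi_{h_i,k_i}\propto\sum_nq^nP_n$ rather than as some operator with worse off-diagonal behaviour, especially when the $0\oplus H$-legs of $\xi$ and $\eta$ are interleaved with the $H$-legs (this is where diagrams of the type used in Lemma~\ref{wnkj} are needed). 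Granting this, the first and last steps are routine.
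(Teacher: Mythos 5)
Your proposal reconstructs the paper's own heuristic --- the ``Idea of the proof'' sketch preceding Lemma \ref{tens} asserts exactly the factorization of $\Phi_{\xi,\eta}$ into an $N$-fold product of single-contraction operators $\Phi_{h_i,k_i}$ interleaved with bounded maps --- and you correctly identify that decomposition as the main obstacle. But that gap is genuine: the paper itself does not carry out the factorization. Its actual proof abandons that route and instead estimates the matrix blocks of $\Phi_{\xi,\eta}$ directly in the $\mathbb{N}$-grading of $\mathcal{F}_q(H)$. Lemma \ref{tech2} shows $|\langle W(\zeta_2),\Phi_{\xi,\eta}(W(\zeta_1))\rangle|\le C_{\xi,\eta}|q|^{kj}\|\zeta_1\|_2\|\zeta_2\|_2$ for $\zeta_1\in H^{\otimes j}$, $\zeta_2\in H^{\otimes i}$ with $j$ large, using precisely your observation --- the $k$ tilde legs of $\xi$ must pair against tilde legs of $\eta$ across all $j$ legs of $\zeta_1$, so the minimal number of crossings satisfies $M\ge jk$ --- but only as a lower bound on the total $q$-power, never as a factorization. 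Lemma \ref{tech} then converts the finite bandwidth together with block decay $\|A_{j,j+\ell}\|\le C r^{jk}$ and $\dim H_j=d^j$ into $A\in S_p$ for $p>-\log d/(k\log r)$, via the elementary estimate $\|A_{j,j+\ell}\|_{S_p}^p\le d^j\|A_{j,j+\ell}\|^p$.

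The obstruction to your decomposition is concrete. In the Wick expansion of $W(\xi)^\ast W(\mu)W(\eta)$, a single tilde--tilde contraction between position $a$ of $\xi$ and position $b$ of $\eta$ across $\mu\in H^{\otimes n}$ carries weight $q^{n+\delta}$, where $\delta$ counts crossings with the other pairs of the scheme and with the surviving non-tilde legs of $\xi,\eta$. These corrections are uniformly bounded (by $n_1+n_2$ and the number of pairs), which is why the crude block bound $|q|^{jk}$ succeeds, but they are coupled across contractions and cannot be distributed as independent diagonal factors $\sum_n q^nP_n$ with \emph{fixed} bounded operators $T_j$ in between acting on $L^2(\mathcal{M})$. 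The natural repair --- placing each tilde leg of $\xi$ and $\eta$ in its own orthogonal copy of $H$ (as the colour operators $y_j$, $z_j$ do in Section 3) and peeling off the conditional expectations one at a time --- only cleanly produces single-contraction factors when the tilde legs are mutually orthogonal, and it relocates the intermediate maps to $L^2$ of an enlarged algebra where one then has to re-verify boundedness and the graded structure. Also note that the inclusion--exclusion bookkeeping of Lemma \ref{wnkj} will not transfer: it organizes an \emph{additive} decomposition of Wick words, not the \emph{multiplicative} one you would need. To close the gap you would have to replace your factorization step with something equivalent to the block estimate of Lemma \ref{tech2}.
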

We shall first need two additional lemmas.
\begin{lemma}
	Let $H = \oplus_{j \geq 0} H_j$ be a graded Hilbert space and $A= [A_{ij}]: H \to H$ be an operator such that
	\begin{enumerate}
		\item $A_{ij} = 0$ if $|i -j| \geq L$ for some $L > 0$.
		\item There exists $j_0$ such that 
			\[
			\|A_{ij}\| \leq C r^{jk}
			\]
			for all $j \geq j_0$ and for some constants $0 < r < 1$, $k$ and $C$ independent of $i$ and $j$.
		\item $\dim(H_j) = d^{j}$.
	\end{enumerate}
	Then $A \in S_p(H)$ for $p > -\frac{\log(d)}{k \log(r)}$.
	\label{tech}
\end{lemma}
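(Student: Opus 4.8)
The plan is to estimate the singular values of $A$ by exploiting the band structure in condition (1) together with the decay from condition (2), and then to compare with the dimension growth in condition (3). First I would decompose $A$ as a finite sum $A = \sum_{|\ell| < L} A^{(\ell)}$ where $A^{(\ell)}$ is the ``$\ell$-th diagonal,'' i.e.\ the block operator with $(A^{(\ell)})_{ij} = A_{ij}$ when $j - i = \ell$ and $0$ otherwise. Since $S_p$ is a normed ideal (quasi-normed if $p < 1$) and there are at most $2L - 1$ terms, it suffices to show each $A^{(\ell)} \in S_p(H)$ for $p > -\frac{\log d}{k \log r}$, with a bound on $\|A^{(\ell)}\|_{S_p}$ independent of $\ell$; summing then gives $A \in S_p$. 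So the problem reduces to a single off-diagonal.

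For a fixed $\ell$, the operator $A^{(\ell)}$ maps $H_j \to H_{j+\ell}$ via the block $A_{j+\ell, j}$ and is zero on the complementary directions, so as an operator it is the orthogonal direct sum $\bigoplus_j A_{j+\ell,j}$, and hence $\|A^{(\ell)}\|_{S_p}^p = \sum_j \|A_{j+\ell,j}\|_{S_p}^p$. Each block $A_{j+\ell,j}$ is a finite-rank operator of rank at most $\min(\dim H_j, \dim H_{j+\ell}) \leq d^j$ with operator norm at most $C r^{jk}$ for $j \geq j_0$ (here I use (2) with the row index $i = j + \ell$, which is fine since the stated bound is uniform in $i$), so its singular values are at most $C r^{jk}$ and there are at most $d^j$ of them, giving
\[
\|A_{j+\ell,j}\|_{S_p}^p \leq d^j \, (C r^{jk})^p = C^p \, (d\, r^{kp})^j.
\]
Summing over $j \geq j_0$, the series $\sum_j (d\, r^{kp})^j$ converges precisely when $d\, r^{kp} < 1$, i.e.\ when $r^{kp} < 1/d$, i.e.\ (taking logs, noting $\log r < 0$) when $kp \log r < -\log d$, which rearranges to $p > -\frac{\log d}{k \log r}$. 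The finitely many terms $j < j_0$ contribute a finite amount since each block is finite rank. This bound is uniform in $\ell$, so summing the at most $2L - 1$ off-diagonals yields $A \in S_p(H)$.

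The only mild subtlety — and the step I would be most careful about — is the case $p < 1$, where $S_p$ is only a quasi-normed ideal and the triangle inequality acquires a constant; but since we only combine finitely many ($2L-1$) off-diagonal pieces, this is harmless, and one can equally well argue at the level of the $p$-summable sequence $\|A\|_{S_p}^p \leq \sum_j \|(\text{$j$-th block column})\|_{S_p}^p$ using that the $S_p$-quasi-norm to the $p$-th power is subadditive for $0 < p \leq 1$. Everything else is a direct computation with geometric series, so no genuine obstacle arises.
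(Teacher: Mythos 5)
Your proof is correct and follows essentially the same route as the paper: decompose $A$ into its finitely many band diagonals $A^{(\ell)}$, observe that each is an orthogonal direct sum of the blocks $A_{j+\ell,j}$, bound each block's $S_p$-quasi-norm by rank times operator norm (giving $d^j(Cr^{jk})^p$), and sum the geometric series. Your treatment of the tail $j<j_0$ by just absorbing it into a finite sum, and your explicit remark about the quasi-triangle inequality when $p<1$ (which can indeed occur here for small $d$ and large $k$), are slightly more careful than the paper's write-up but not a different argument.
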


\begin{proof}
	Let $K_1 = \oplus_{j = 0}^{j_0 -1} H_j$ and $K_2 = \oplus_{j \geq j_0}H_j$, then 
	\begin{align*}	\|A\|_{S_p} \leq \|A: K_1 \to K_1\|_{S_p} &+ \|A: K_1 \to K_2\|_{S_p}\\
		&+ \|A^\ast: K_1 \to K_2\|_{S_p} + \|A: K_2 \to K_2\|_{S_p}\end{align*}
		Since $K_1$ is finite dimensional, we may control the first three norms simply by a constant depending on the dimension of $K_1$ and the norm of $A$, so we only must estimate 
		\[
		\|A: K_2 \to K_2\|_{S_p}.
		\]
		For $\|A: K_2 \to K_2\|_{S_p}$, we have that
\begin{align*}
	\|A\|_{S_p} &\leq C + \sum_{\ell = -L}^L \sum_{j \geq L} \|A_{j, j+\ell}\|_{S_p}\\
	&\leq C + \sum_{\ell = -L}^L((\sum_{j = j_0}^\infty  d^j \|A_{j+\ell, j}\|^p)^\frac{1}{p})\\
	&\leq C + (\sum_{j = j_0}^\infty (2L)d^j C^p r^{jkp})^\frac{1}{p}
	\end{align*}
	The sum converges if and only if $d r^{kp} < 1$, which is equivalent to $p > -\frac{\log(d)}{k \log(r)}$.
\end{proof}

\begin{lemma}
	Suppose $\xi \in (H \oplus H)^{\otimes_{} n_1} \cap F_k$, $\eta \in (H \oplus H)^{\otimes_{} n_2} \cap F_k$, and that $\zeta_1 \in H^{\otimes j}$, and $\zeta_2 \in H^{\otimes_{}i}$ for $j \geq 2(n_1+n_2)$ and $|i-j| \leq n_1 + n_2$.Then we have that

	\[
	|\langle W(\zeta_2), \Phi_{\xi,\eta}(W(\zeta)) \rangle| \leq C_{\xi, \eta} |q|^{kj} \|\zeta_1\|_{2} \|\zeta_2\|_{2}.
	\]
	\label{tech2}
\end{lemma}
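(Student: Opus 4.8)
The plan is to collapse the matrix coefficient into a single inner product on $\mathcal F_q(H\oplus H)$, expand the two vectors by the Wick calculus, and then use the orthogonality of $H$ and $0\oplus H$ inside $H\oplus H$ to force the $k$ ``tilde'' legs carried by $\xi$ to be contracted against the $k$ tilde legs carried by $\eta$; since $\zeta_1$ supplies a block of roughly $j$ non-tilde legs that must be transported past, this contraction costs a power $|q|^{kj}$. The starting point is the identity
\[
\langle W(\zeta_2),\Phi_{\xi,\eta}(W(\zeta_1))\rangle
=\tau_{\widetilde{\mathcal M}}\!\bigl(W(\zeta_2)^\ast W(\xi)^\ast W(\zeta_1)W(\eta)\bigr)
=\bigl\langle W(\xi)\zeta_2,\,W(\zeta_1)\eta\bigr\rangle_{\mathcal F_q(H\oplus H)},
\]
valid because $W(\zeta_2)\in\mathcal M$ and $E_{\mathcal M}=\Gamma_q(P_1)$ is trace preserving (so $\tau_{\mathcal M}(W(\zeta_2)^\ast E_{\mathcal M}(Y))=\tau_{\widetilde{\mathcal M}}(W(\zeta_2)^\ast Y)$), the last step being $\tau(A^\ast B)=\langle A\Omega,B\Omega\rangle$ with $A=W(\xi)W(\zeta_2)$, $B=W(\zeta_1)W(\eta)$.

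Next I would expand $W(\xi)\zeta_2$ and $W(\zeta_1)\eta$ via $W(\,\cdot\,)=\sum_k U_{n,k}R^\ast_{n,k}(\,\cdot\,)$ (Theorem \ref{wick}). By linearity one may take $\xi,\eta$ to be simple tensors, and since $\langle h,h'\rangle=0$ for $h\in H$, $h'\in 0\oplus H$, the coefficient vanishes unless $\xi$ and $\eta$ carry the same number $k'\ge k$ of legs from $0\oplus H$; as $|q|^{k'j}\le|q|^{kj}$ one may as well assume $k'=k$. Because $\zeta_2\in H^{\otimes i}$ has no $0\oplus H$ component, every annihilation operator attached to a tilde leg of $\xi$ annihilates $\zeta_2$, so the surviving terms of $W(\xi)\zeta_2$ are of the form $p\otimes a$ with $p$ a simple tensor of length $n_1-r$ ($0\le r\le n_1-k$) containing all $k$ tilde legs and $a\in H^{\otimes(i-r)}$; thus the tilde legs occupy positions $\le n_1$. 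Symmetrically $W(\zeta_1)\eta$ is a combination of terms $b\otimes p'$ with $b\in H^{\otimes(j-s)}$ and $p'$ a simple tensor of length $n_2-s$ ($0\le s\le n_2-k$) containing all $k$ tilde legs, so there the tilde legs occupy positions $\ge j-n_2+1$; in both expansions the scalar coefficients are a power of $q$ times at most $n_1$, resp.\ $n_2$, inner products against the fixed vectors making up $\xi$, resp.\ $\eta$.

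Now I would compute the Fock inner product term by term. Two terms pair nontrivially only when their common length is $L=n_1+i-2r=j+n_2-2s$, which by $j\ge 2(n_1+n_2)$ and $|i-j|\le n_1+n_2$ forces $L\ge j-n_2$; since $\langle\,\cdot\,,\,\cdot\,\rangle_q$ on $H^{\otimes L}$ is the flat inner product pre-composed with the $q$-symmetrisation $\sum_{\pi\in S_L}q^{\iota(\pi)}\Theta_\pi$, where $\Theta_\pi$ permutes the tensor factors by $\pi$, a summand survives only when $\Theta_\pi$ carries the $k$ tilde positions of the second factor (all $\ge j-n_2+1$) onto those of the first (all $\le n_1$). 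A counting argument then shows each such ``long jump'' forces at least $j-n_1-n_2+1$ inversions of $\pi$, and the $k$ families of inversions so produced are disjoint, whence $\iota(\pi)\ge k(j-n_1-n_2)$. Consequently every surviving summand carries an overall factor of modulus at most $|q|^{-k(n_1+n_2)}|q|^{kj}$, which I would pull out as $|q|^{kj}$ times a constant depending only on $q,n_1,n_2$.

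It remains to bound the residual sum by $C_{\xi,\eta}\|\zeta_1\|_2\|\zeta_2\|_2$. After the tilde matching is fixed and $|q|^{kj}$ extracted, the residue runs over $r,s$, over the finitely many (at most $C(n_1,n_2)$) ways of distributing and matching the $O(n_1+n_2)$ legs belonging to $\xi,\eta$, and — carrying all their $q$-weights — over matchings of the non-tilde part of $W(\xi)\zeta_2$ against that of $W(\zeta_1)\eta$; summing these $q$-weighted matchings first should reassemble the expression into $q$-deformed inner products of the ``remnants'' of $\zeta_1$ and $\zeta_2$ (equivalently, reversing the Wick expansions, into three-fold products of the type computed in Lemma \ref{threeprod}), whereupon Cauchy--Schwarz for $\langle\,\cdot\,,\,\cdot\,\rangle_q$ together with $\|\zeta_i\|_2=\|\zeta_i\|_{\mathcal F_q}$ and the Bo\.{z}ejko--Speicher bounds $\prod_{m\ge1}(1-|q|^m)\le P_q^{(n)}\le\prod_{m\ge1}(1-|q|^m)^{-1}$ (so that norms of remnants, with the scalar factors accompanying removed legs, are comparable up to $q$-constants to $\|\zeta_1\|_{\mathcal F_q}$ and $\|\zeta_2\|_{\mathcal F_q}$) finishes after summing the finitely many remaining choices. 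This last step is the crux: the residual sum must be organised so that the $q$-weighted matchings of the two long remnants genuinely collapse into the $q$-inner product $\langle\,\cdot\,,\,\cdot\,\rangle_q$, rather than into an unweighted permanent, which would grow like $(i\wedge j)!$ and swamp $\|\zeta_1\|_2\|\zeta_2\|_2$; the safest route is to avoid monomialising $W(\zeta_1),W(\zeta_2)$ altogether and to expand the four-fold trace directly by iterating Lemma \ref{threeprod}, in which each such matching already appears packaged as a single $q$-inner product. (The finitely many $j<2(n_1+n_2)$ are absorbed as in Lemma \ref{tech}.)
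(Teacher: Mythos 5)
Your proposal correctly identifies the heart of the paper's argument: pass to the Fock inner product $\langle W(\xi)W(\zeta_2), W(\zeta_1)W(\eta)\rangle$, observe that the $k$ tilde legs of $\xi$ and $\eta$ cannot be contracted against the non-tilde $\zeta_2,\zeta_1$ and hence survive near the two ends, and then charge each of the $k$ forced long jumps with $\approx j$ inversions to extract $|q|^{kj}$ (the paper's count is $M\geq (j-n_2+i-n_1)k \geq kj$, yours is $k(j-n_1-n_2)$, and the discrepancy $|q|^{-k(n_1+n_2)}$ is harmlessly absorbed into $C_{\xi,\eta}$). However, there is a genuine gap in the route you describe: expanding $W(\xi)\zeta_2$ and $W(\zeta_1)\eta$ monomial-by-monomial via $W=\sum U_{n,k}R^\ast_{n,k}$ and then $q$-symmetrising the pairing leaves you with a sum of permanent type in the long remnants of $\zeta_1,\zeta_2$, and you correctly observe that unless the $q$-weights reassemble into $q$-inner products this sum is not controlled by $\|\zeta_1\|_2\|\zeta_2\|_2$. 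You defer this as ``the crux'' and propose the fallback of iterating Lemma \ref{threeprod} instead, but you do not carry that out.

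That fallback is precisely what the paper does, and it is where the work lies. The paper expands the products $W(\xi)W(\zeta_2)$ and $W(\zeta_1)W(\eta)$ into sums over $\nu\in P_{1,2}(m_1)$ resp.\ $P_{1,2}(m_2)$ with weights $q^{\iota(\nu)}$ times the contracted inner products (the two-fold-product incarnation of Lemma \ref{threeprod}). This organization pays off twice. First, the partition sums themselves are controlled by Nou's Lemma~1, giving $\sum_\nu |q|^{\iota(\nu)}\|(\xi\otimes\zeta_2)_\nu\|_2\,|\prod_\nu(\xi\otimes\zeta_2)_p|\leq C_{q,n_1}\|\xi\|_2\|\zeta_2\|_2$ and the analogue for the other factor; the $q$-weights are exactly what prevents the factorial blowup you were worried about. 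Second, for each fixed pair $(\nu_1,\nu_2)$, the remaining inner product $\langle(\xi\otimes\zeta_2)_{\nu_1},(\zeta_1\otimes\eta)_{\nu_2}\rangle$ is estimated directly by your jump/inversion argument together with Cauchy--Schwarz in the $q$-inner product, yielding $|q|^{jk}\|(\xi\otimes\zeta_2)_{\nu_1}\|_2\|(\zeta_1\otimes\eta)_{\nu_2}\|_2$. Multiplying the two bounds and summing finishes the proof. So the residual-sum control you defer is not a separate technicality to be glued on at the end: it is the reason to expand via pair partitions from the outset rather than via $U_{n,k}R^\ast_{n,k}$. As written your plan establishes the $|q|^{kj}$ factor but does not establish the $\|\zeta_1\|_2\|\zeta_2\|_2$ factor, so it does not yet constitute a proof.
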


\begin{proof}
	First observe that 
	\begin{align*}
		\langle W(\zeta_2), \Phi_{\xi,\eta}(W(\zeta_1)) \rangle &= \tau\left( W(\zeta_2)^\ast E_\mathcal{M}(W(\xi)^\ast W(\zeta_1) W(\eta) \right)\\
		&= \tau\left( W(\zeta_2)^\ast W(\xi)^\ast W(\zeta_1) W(\eta) \right)\\
		&= \langle W(\xi)W(\zeta_2) , W(\zeta_1)W(\eta) \rangle 
	\end{align*}
	Applying our formula from Lemma \ref{threeprod}, we get that
	\[
	W(\xi)W(\zeta_2) = \sum_{\nu \in P_{1,2}(m_1)} q^{\iota(\nu)} W(\xi \otimes_{} \zeta_2)_{\nu} \prod_{\nu} (\xi \otimes_{} \zeta_2)_p
	\]
	where $P_{1,2}$ and $\iota: P_{1,2} \to \mathbb{R}$ are defined in Section 3, $m_1 = n_1 + i$, $(\xi \otimes_{} \zeta_1)_\nu$ denotes $\xi \otimes_{} \zeta_2$ with the pairs of $\nu$ removed, and 
	\[
	\prod_\nu (\xi \otimes_{} \zeta_2)_p  = \prod_{ \left\{ \alpha, \beta \right\} \in \nu} \langle h_\alpha,  f_\beta \rangle.
	\]
	Note that the $k$ tensors in $\xi$ which belong to the subspace $0 \oplus H$ cannot be an element of a pair since they are orthogonal to each of the tensors of $\zeta_2$. Naturally, we have a similar formula for $W(\zeta_1)W(\eta)$. We now need to prove two claims.
	\begin{claim}
		\[
		\sum_{\nu \in P_{1,2}(m_1)} |q|^{\iota(\nu)} \|(\xi \otimes_{} \zeta_2\|_{2} |\prod_\nu (\xi \otimes_{} \zeta_2)_p| \leq C_{q, n_1} \|\xi\|_{2} \|\zeta_2\|_{2} 
		\]
	\end{claim}

	\begin{proof}
		Following from Lemma 1 in \cite{nou}, we have
	\begin{align*}
		\sum_{\nu \in P_{1,2}(m_1)} |q|^{\iota(\nu)} \|(\xi \otimes_{} \zeta_2)_\nu\|_{2} &|\prod_\nu (\xi \otimes_{} \zeta_2)_p| \\
		&\leq C_q \sum_{\nu \in P_{1,2}(m_1)} |q|^{\iota(\nu)} \|\xi_\nu\|_{2} \|(\zeta_2)_\nu\|_{2} \prod_{ \left\{ \alpha, \beta \right\} \in \nu} \|\xi_\alpha\|_H \|\zeta_{2, \beta}\|_{H}\\
		&\leq C_{q, n_1}\sum_{\nu \in P_{1,2}(m_1)}  |q|^{\iota(\nu)} \|\xi\|_{2} \|\zeta_2\|_{2}\\
		&\leq C_{q,n_1} \|\xi\|_{2} \|\zeta_2\|_{2}
	\end{align*}
	\end{proof}
	We clearly have the same estimate for $W(\zeta_1)W(\eta)$. 
	\begin{claim}
		For any fix $\nu_1 \in P_{1,2}(m_1)$ and $\nu_2 \in P_{1,2,}(m_2)$, we have
		\[
		|\langle (\xi \otimes_{}\zeta_2)_{\nu_1}, (\zeta_1 \otimes_{} \eta)_{\nu_2} \rangle | \leq C_q |q|^{jk} \|(\xi \otimes_{} \zeta_2)_{\nu_1}\|_{2}\|(\zeta_1 \otimes_{} \eta)_{\nu_2}\|_{2}
		\]
	\end{claim}

	\begin{proof}
		Recall that both $(\xi \otimes_{}\zeta_2)_{\nu_1}$ and $(\zeta_1 \otimes_{} \eta)_{\nu_2}$ contain $k$ tensors from $0 \oplus H$ as these elements cannot be contained in the pairs of $\nu_1$ and $\nu_2$. The elements must be in the same eigenspace of the number operator, thus $n_1 + i - 2P(\nu_1) = n_2 + j - 2P(\nu_2) := m$, where $P(\nu)$ denotes the number of pairs of an element of $P_{1,2}(m_j)$. By our assumptions above, there must be more than $j - n_2$ tensors which are elements of $H \oplus 0$ to the left of the left-most tensor which is contained in $0 \oplus H$ in $(\zeta_1 \otimes_{} \eta)$ (since $\eta$ contains all of the tensors which are elements of $0 \oplus H$). Similarly, there must be more than $i - n_1$ tensors to the right of the right-most element of $0 \oplus H$. Therefore, using quasi-multiplicativity of the function $q^{\iota}$ and Cauchy-Schwarz, we have
		\begin{align*}
			|\langle (\xi \otimes_{} \zeta_2)_{\nu_1}, (\zeta_1 \otimes_{} \eta)_{\nu_2} \rangle| &\leq \sum_{\sigma \in S_m} q^{\iota(\sigma)} \prod_j |\langle h_j, f_{\sigma(j)} \rangle | \\
			&= |q|^M \sum_{\sigma \in S_k} \sum_{\sigma' \in S_{m-k}} |q|^{\iota(\sigma) + \iota(\sigma')} \prod_j \prod_{j} |\langle h_j, f_{\sigma(j)} \rangle|\\
			&\leq |q|^M \|(\xi \otimes_{} \zeta_2)_{\nu_1}\|_{2} \|(\zeta_1 \otimes_{} \eta)_{\nu_2}\|_{2}
		\end{align*}
		where $M$ is the element with the minimal number of inversions which allows us to match all of the tensors which are elements of $0 \oplus H$ on the left with the corresponding tensors on the right. By what we have observed above, we have that 
		\[
		M \geq (j - n_2 + i - n_1)k \geq (2j - 2n_1 -2n_2)k \geq jk
		\]
	\end{proof}
	Now we can easily prove the lemma. We see that
	\begin{align*}
		&|\langle W(\xi)W(\zeta_2), W(\zeta_1)W(\eta) \rangle| \\
		&\leq \sum_{\substack{\nu_1 \in P_{1,2}(m_1)\\ \nu_2 \in P_{1,2}(m_2)}} |q|^{\iota(\nu_1) + \iota(\nu_2)} |\langle (\xi \otimes_{}\zeta_2)_{\nu_1}, (\zeta_1 \otimes_{} \eta)_{\nu_2} \rangle| |\prod_{\nu_1} (\xi \otimes_{} \zeta_2)_p \prod_{\nu_2} (\zeta_1 \otimes_{} \eta)_p|\\
		&\leq |q|^{jk} \sum_{\substack{\nu_1 \in P_{1,2}(m_1)\\ \nu_2 \in P_{1,2}(m_2)}} |q|^{\iota(\nu_1)+ \iota(\nu_2)} \|(\xi \otimes_{} \zeta_2)_{\nu_1}\|_{2} \|(\zeta_1 \otimes_{} \eta)_{\nu_2}\|_{2} |\prod_{\nu_1}(\xi \otimes_{} \zeta_2)_p \prod_{\nu_2} (\zeta_1 \otimes_{} \eta)_p|\\
		&\leq C_{q, n_1, n_2} |q|^{jk} \|\zeta_1\|_{2} \|\zeta_2\|_{2} \|\xi\|_{2} \|\eta\|_{2}
	\end{align*}
\end{proof}

\begin{proof}[Proof of Lemma \ref{tens}]
	The previous lemma immediately implies that $\|(\Phi_{\xi, \eta})_{ij}\|_{} \leq C_{q, \xi, \eta} |q|^{jk}$. Let $\left\{ e_{\iota} \right\}$ and $\left\{ f_{\iota} \right\}$ be orthonormal bases of $H^{\otimes_{}i}$ and $H^{\otimes_{}j}$ respectively and $\zeta_1 = \sum_{\iota} b_\iota f_\iota$ and $\zeta_2 = \sum_{\iota} a_\iota e_\iota$. Then we have
	\begin{align*}
		|\langle \zeta_2, \Phi_{\xi, \eta}(\zeta_1) \rangle | &\leq |\langle \sum_{\iota_1} a_\iota e_\iota, \Phi_{\xi, \eta}(\sum_{\iota_2}b_\iota f_\iota) \rangle | \\
		&\leq \sum_{\iota_1, \iota_2}|a_\iota| |b_\iota| |\langle e_\iota,\Phi_{\xi, \eta}(f_\iota)  \rangle |\\
		&\leq C_{q, \xi, \eta} |q|^{jk} \sum_{\iota_1, \iota_2} |a_\iota| |b_\iota| \leq C_{q, \xi, \eta} |q|^{jk} \|\zeta_1\|_2 \|\zeta_2\|_{2}
	\end{align*}
	Therefore, by Lemma \ref{tech}, we have that $\Phi_{\xi, \eta} \in S_p$ for $p > -\frac{\log(d)}{k \log(|q|)}$. 
	\end{proof}


\begin{proof}[Proof of Proposition \ref{weakcont}]
	Let $\xi \in F_{k}$ for $k \geq -\frac{\log(d)}{2 \log(|q|)}$. With the natural $\mathcal{M}$-$\mathcal{M}$-bimodule structure on $F_k$, we can see that
	\[
	\langle \xi, x \xi y \rangle  = \tau(W(\xi)^\ast x W(\xi)y) = \tau(E_\mathcal{M}(W(\xi)^\ast x W(\xi)y)) = \tau(\Phi_{\xi, \xi}(x)y)
	\]
	for $x, y \in \mathcal{M}$. Note that $\Phi_{\xi, \xi}$ coincides with $T_{\varphi_\xi}$ in Lemma \ref{weakcontHS}. Thus since $\Phi_{\xi, \xi}$ is Hilbert-Schmidt for all $\xi$, $F_k \prec L_{}^{2}(\mathcal{M}) \otimes L_{}^{2}(\mathcal{M})$ by Lemma \ref{weakcontHS}. Therefore, $E_{m-1}^{\perp} = \oplus_{k \geq m} F_k \prec \oplus_{k \geq m} L_{}^{2}(\mathcal{M}) \otimes_{} L_{}^{2}(\mathcal{M})$. Since a direct sum of coarse bimodules is weakly contained in the coarse bimodule, we have proved the proposition.
\end{proof}

\section{Strong Solidity}
As shown in the previous section, $L_{0}^{2}(\Gamma_q(H \oplus H))$ is not necessarily weakly contained in the coarse correspondence for $q^2 \dim(H) \geq 1$. However, the submodule $E_k^\perp$ is weakly contained in the coarse bimodule for sufficiently large $k$. This requires us to modify Popa's s-malleable deformation (\cite{popa08} Lemma 2.1) estimate slightly to suit our new situation. What we need to know is that the image of $\Gamma_q(H)$ under the automorphism group $\alpha_t$ has a ``large enough'' intersection with $E_k^\perp$. This is the purpose of the following proposition.

\begin{prop}
	\label{bigprop}
	For a fixed $k \geq 1$, there exists a constant depending only on $k$, $C_k$ such that

	\[	\|(\alpha_{t^k} - id)(x)\|_2 \leq C_k \|E_{k - 1}^\perp \alpha_t(x)\|_2	\]
	for  $x \in \oplus_{m \geq k} H^{\otimes m} \subset \mathcal{F}_q(H)$ and $t < 2^{-k}$.
\end{prop}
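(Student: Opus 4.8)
The plan is to pass to homogeneous components, evaluate both sides of the inequality exactly on each component, and then reduce to an elementary estimate for the tail of a binomial law. Throughout, recall that on $L^2(\Gamma_q(H\oplus H))\cong\mathcal{F}_q(H\oplus H)$ the automorphism $\alpha_s=\Gamma_q(R_s)$ acts as the second quantization $\bigoplus_n R_s^{\otimes n}$, hence preserves the grading by total degree.

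\emph{Reduction to a single degree.} Write $F_j$ for the closed span of the Wick words having exactly $j$ tensor legs in $0\oplus H$, so that $E_{k-1}^\perp=\bigoplus_{j\geq k}F_j$. First I would note that the $F_j$ are pairwise orthogonal in $\mathcal{F}_q(H\oplus H)$: a nonzero $q$-inner product between a tensor with $j$ legs in $0\oplus H$ and one with $j'$ such legs forces a permutation carrying the one index set onto the other, hence $j=j'$. Since both $\alpha_s$ and the projection onto $E_{k-1}^\perp$ respect the total-degree grading, writing $x=\sum_{m\geq k}x_m$ with $x_m\in H^{\otimes m}$ splits both sides of the claimed inequality as $\ell^2$-sums over $m$ of the corresponding quantities for $x_m$. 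So it suffices to produce $C_k$, depending only on $k$, valid for a single $x=x_n\in H^{\otimes n}$ with $n\geq k$, which I normalize to $\|x_n\|_2=1$.

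\emph{Evaluating the two sides.} Writing $c_s=e^{-s}$, $\sigma_s=\sqrt{1-e^{-2s}}$ and expanding
\[
R_s^{\otimes n}(h_1\otimes\cdots\otimes h_n)=\sum_{B\subseteq\{1,\dots,n\}}c_s^{\,n-|B|}\sigma_s^{\,|B|}\,\Theta_B(h_1\otimes\cdots\otimes h_n),
\]
where $\Theta_B$ routes the legs in $B$ into $0\oplus H$ and the rest into $H\oplus 0$, I obtain two facts by sesquilinearity and density. First, only the $B=\emptyset$ term has nonzero $q$-inner product with $x_n$ (which lies entirely in $H\oplus 0$), with coefficient $c_{t^k}^{\,n}=e^{-nt^k}$; since $\alpha_{t^k}$ is unitary on $L^2$ this gives $\|(\alpha_{t^k}-\mathrm{id})(x_n)\|_2^2=2\bigl(1-e^{-nt^k}\bigr)$. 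Second, the orthogonal projection of $\alpha_t(x_n)$ onto $F_j$ equals $c_t^{\,n-j}\sigma_t^{\,j}\,V_{n,j}(x_n)$ with $V_{n,j}=\sum_{|B|=j}\Theta_B$, and I claim $V_{n,j}^\ast V_{n,j}=\binom{n}{j}\,\mathrm{id}_{H^{\otimes n}}$: summing the identity $\langle\Theta_B\zeta,\Theta_{B'}\zeta'\rangle=\sum_{\rho\in S_n,\ \rho(B)=B'}q^{\iota(\rho)}\prod_i\langle\zeta_i,\zeta'_{\rho(i)}\rangle$ over all $j$-subsets $B,B'$ counts each $\rho\in S_n$ exactly $\binom{n}{j}$ times (once for each choice of $B$, with $B'=\rho(B)$), so the total collapses to $\binom{n}{j}\langle\zeta,\zeta'\rangle$. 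Hence
\[
\|E_{k-1}^\perp\alpha_t(x_n)\|_2^2=\sum_{j=k}^{n}\binom{n}{j}\bigl(1-e^{-2t}\bigr)^{j}e^{-2t(n-j)},
\]
the upper tail of a $\mathrm{Binomial}(n,\,1-e^{-2t})$ law.

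\emph{The binomial estimate and conclusion.} I would bound this tail from below by splitting $\{1,\dots,n\}$ into $k$ blocks, each of size at least $\lfloor n/k\rfloor\geq n/(2k)$: if every block contains a success then there are at least $k$ successes, so by independence the tail is at least $\bigl(1-e^{-2t\lfloor n/k\rfloor}\bigr)^{k}\geq\bigl(1-e^{-nt/k}\bigr)^{k}$. It then remains to check the elementary inequality $2\bigl(1-e^{-nt^k}\bigr)\leq C_k^2\bigl(1-e^{-nt/k}\bigr)^{k}$ using $1-e^{-a}\leq a$ and $1-e^{-b}\geq\tfrac12\min(b,1)$: when $nt\geq k$ the left side is $\leq 2$ and the right side is $\geq C_k^2 2^{-k}$, and when $nt<k$ the left side is $\leq 2nt^k$ while the right side is $\geq C_k^2(nt)^k 2^{-k}k^{-k}$, which suffices since $n^{k-1}\geq k^{k-1}$. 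Thus $C_k^2=2^{k+1}k$ works, the hypothesis $t<2^{-k}$ being comfortably more than needed, and combining the two evaluations with this estimate proves the proposition. The step I expect to be the real obstacle is the identity $V_{n,j}^\ast V_{n,j}=\binom{n}{j}\,\mathrm{id}$, i.e. the fact that $\|E_{k-1}^\perp\alpha_t(x_n)\|_2^2$ is \emph{exactly} a binomial tail with no dependence on $q$ or on the internal structure of $x_n$; once that is secured the remainder is routine probability and calculus, with a lesser point being the bookkeeping that $E_{k-1}^\perp$ decomposes as an orthogonal direct sum both along the total-degree grading and along the ``number of $0\oplus H$ legs'' grading.
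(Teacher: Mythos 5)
Your proof is correct and follows the same overall strategy as the paper: reduce to a single homogeneous degree $n \geq k$, compute both sides exactly, and observe that $\|E_{k-1}^\perp\alpha_t(x_n)\|_2^2/\|x_n\|_2^2$ is precisely the upper tail $\sum_{j\geq k}\binom{n}{j}(1-e^{-2t})^j e^{-2t(n-j)}$ of a $\mathrm{Binomial}(n,1-e^{-2t})$ distribution while $\|(\alpha_{t^k}-\mathrm{id})(x_n)\|_2^2/\|x_n\|_2^2 = 2(1-e^{-nt^k})$. Your justification of the key identity $V_{n,j}^\ast V_{n,j}=\binom{n}{j}\,\mathrm{id}$ (counting, for each $\sigma\in S_n$, the $\binom{n}{j}$ pairs $(B,\sigma(B))$) is exactly the content of the paper's remark that summing over all $B$ and all cosets recovers the full permutation sum $\binom{n}{m}$ times.

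The one place you genuinely depart from the paper is the final elementary estimate. The paper bounds the binomial tail from below by keeping only the $m=k$ term when $nt$ is small, and by writing the tail as $1$ minus the head and choosing a threshold $M_k$ when $nt$ is large. You instead obtain the clean uniform lower bound $(1-e^{-nt/k})^k$ by partitioning $\{1,\dots,n\}$ into $k$ blocks and using independence, and then the two-case comparison against $2(1-e^{-nt^k})$ is routine. Your route is tidier: it gives an explicit constant $C_k^2=2^{k+1}k$, avoids introducing the auxiliary threshold $M_k$, and as you note does not actually need the restriction $t<2^{-k}$. Both arguments are correct; yours is a modest simplification of the paper's closing estimate rather than a different method.
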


\begin{proof}
	
	Let $x = h_1 \otimes\cdots \otimes h_n$, $y = k_1 \otimes \cdots \otimes k_n$. Note that $\alpha_{t^k} - id$ and $E_{k-1}^\perp \alpha_t$ are both tensor length-preserving operators, so it suffices to prove this estimate on $H^{\otimes n}$ for $n \geq k$.
      We calculate	
      \begin{align*}
	      \langle E_{k-1}^\perp \alpha_t(x), E_{k-1}^\perp \alpha_t(y) \rangle &= \sum_{m = k}^n \langle F_m \alpha_t(x), F_m \alpha_t(y) \rangle \\
	      &= \sum_{m = k}^n \sum_{A, B \subset \{1, \ldots, n\}} e^{-2t(n-m)}(1 - e^{-2t})^{m}\langle x_{A^c} \otimes x_A, y_{B^c} \otimes y_B \rangle_q
      \end{align*}
      where $x_{A^c} \otimes x_A$ denotes that the indices belonging to $A^c$ come from $H \oplus 0$ and the indices belonging to $A$ come from $0 \oplus H$. Now we expand the $q$-inner product to get 
      \begin{align*}
	      &\sum_{m = k}^n \sum_{A, B \subset\{1, \ldots, n\}} e^{-2t(n-m)}(1 - e^{-2t})^m \langle x_{A^c} \otimes x_A, y_{B^c} \otimes y_B \rangle\\  &= \sum_{m =  k}^n e^{-2t(n-m)} (1 - e^{-2t})^{m} \sum_{A} \sum_{x \in S_n/S_A \times S_{A^c}} \sum_{\sigma \in x} q^{\iota(\sigma)} \prod_{j} \langle h_j, k_{\sigma(j)}\rangle
      \end{align*}
      since for each $A, B \subset \{1, \ldots, n\}$ only those permutations $\sigma \in S_n$ which map $A$ to $B$ contribute to the inner product. This is equivalent to summing over the right cosets in $S_n/S_A \times S_{A^c}$. However, since we are summing over all permutations in all of the cosets, for each fixed $A$, we are summing over all the permutations, and so we just get $\langle x, y \rangle_q$ for each fixed $A$. Therefore we get that 
      \[
      \sum_{m = k} \sum_{|A| = k} e^{-2t(n-m)} (1- e^{-2t})^m \langle x, y \rangle_q = \sum_{m = k}^n e^{-2t(n-m)}(1 - e^{-2t})^{m} {n \choose m} \langle x, y \rangle_q.
      \]
      For $\alpha_{t^k} - id$ we have
      \begin{align*}
	      \langle (\alpha_{t^k} - id)(x), (\alpha_{t^k} - id)(y) \rangle &= \langle \alpha_{t^k}(x), \alpha_{t^k}(y) \rangle - \langle \alpha_{t^k}(x), y \rangle  - \langle x, \alpha_{t^k}(y) \rangle + \langle x, y \rangle \\
	      &= 2(\langle x, y \rangle  - \langle x, T_{t^k}(y) \rangle )\\
	      &= 2(1 - e^{-nt^k})\langle x, y \rangle .
      \end{align*}

      Therefore, we only have to show that $2(1-e^{-nt^k}) < C \sum_{m = k}^n {n \choose m} e^{-2(n-m)t}(1 - e^{-2t})^{m}$ for some $C$ independent of $n$ and $t$. We choose $M_k$ such that $e^{-2nt} \sum_{m=0}^{k-1} C_m n^m t^m < \frac{1}{2}$ for $nt > M_k$. Suppose $nt < M_k$. We have
      \[
      \sum_{m = k}^n {n \choose m} e^{-2(n-m)t}(1 - e^{-2t})^{m} > {n \choose k} e^{-2(n-k)t}(1 - e^{-2t})^{k} > C_k n^k t^{k},
      \]
      and
      \[
      2(1 - e^{-nt^k}) < C_k nt^k < C_k n^k t^k
      \]
      Now suppose that $nt > M_k$. Then $n > 2^k$ since $t < 2^{-k}$ and so,
      
      \begin{align*}
	      \sum_{m = k}^n {n \choose m} e^{-2(n- m)t}(1 - e^{-2t})^m &= 1 - \sum_{m = 0}^{k-1} {n \choose m}e^{-2(n-m)t}(1 - e^{-2t})^m\\
	      &= 1 - e^{-2nt}\sum_{m=0}^{k-1} {n \choose m} e^{2mt}(1- e^{-2t})^m\\
	      &\geq 1 - e^{-2nt} \sum_{m=0}^{k-1}C_m n^m t^m
	      \geq \frac{1}{2}
            \end{align*}
	    However, clearly $2(1-e^{-nt^k}) < 2$ and so we have proved the statement for all $n$ and $t < 2^{-k}$.
	\end{proof}

	Now we may prove Theorem B, following the proof of Theorem 3.5 in \cite{HS}. There are a number of modifications since we are using a proper sub-bimodule of $L_{0}^{2}(\Gamma_q(H \oplus H))$.  

\begin{thmb}
	For all $-1 < q < 1$ and all $\dim(H) < \infty$, $\Gamma_q(H)$ is strongly solid.
\end{thmb}

\begin{proof}
	Let $P \subset \Gamma_q(H)$ be a diffuse, amenable subalgebra. We want to prove that $\mathcal{N}_{\Gamma_q(H)}(P)''$ is also amenable. $P$ is not rigid with respect to the deformation $\alpha_t$ (Lemma \ref{rigid}), and $P$ is weakly compact inside of $\Gamma_q(H)$.  Since $P \subset \Gamma_q(H)$ is weakly compact, there is a net of elements $(\eta_n) \in L^2(P \otimes \bar{P})$ which satisfy 
	\begin{enumerate}
		\item $\lim_n \|\eta_n - (v \otimes \bar{v}) \eta_n\|_2 = 0$, $\forall v \in \mathcal{U}(P)$,\\
		\item $\lim_n \|\eta_n - Ad(u \otimes \bar{u})\eta_n\|_2 = 0$, $\forall u \in \mathcal{N}_{\Gamma_q(H)}(P)$ \mbox{ and,}\\
		\item $\langle (1 \otimes \bar{x}) \eta_n, \eta_n \rangle = \tau(x) = \langle \eta_n, (x \otimes 1) \eta_n \rangle $.
	\end{enumerate}
	Following \cite{HS}, let $\mathcal{G}$ denote $\mathcal{N}_{\Gamma_q(H)}(P)$, and let $z \in \mathcal{Z}(\mathcal{G}' \cap \Gamma_q(H))$ be a non-zero projection. Since $\alpha_t$ does not converge uniformly on $(P)_1$, $\alpha_t$ does not converge uniformly on $(Pz)_1$ and so $\alpha_t$ does not converge uniformly on $\mathcal{U}(Pz)$ either. Therefore there exist $0 < c < 1$, a sequence $(u_k) \in \mathcal{U}(Pz)$, and a sequence $t_k \to 0$ such that $\| \alpha_{t_k}(u_k z) - (E_{m-1} \circ \alpha_{t_k})(u_k z)\|_2 \geq c \|z\|_2$  $\forall k \in \mathbb{N}$, by Proposition \ref{bigprop}. Since $\|\alpha_{t_k}(u_k z)\|_2 = \|z\|_2$, we get 
	\begin{equation}	
		\label{star}
		\| (E_{m-1}^\perp \circ \alpha_{t_k})(u_k z)\|_2 \leq \sqrt{1 - c^2} \|z\|_2
		\end{equation}
	for all $k \in \mathbb{N}$.
	Let $P_\mathcal{H} = E_{m-1}^\perp$. Define for all $n$ and $k$
	\begin{align*}
		\eta_n^k &= (\alpha_{t_k} \otimes 1)(\eta_n) \in L_{}^{2}(\Gamma_q(H \oplus H)) \bar{\otimes} L_{}^{2}(\bar{\Gamma_q(H)})\\ 
		\xi_n^k &= (P_\mathcal{H}^\bot \alpha_{t_k} \otimes 1)(\eta_n) \in (L_{}^{2}(\Gamma_q(H \oplus H)) \ominus \mathcal{H}) \bar{\otimes} L_{}^{2}(\bar{\Gamma_q(H)})\\
		\zeta_n^k &= (P_\mathcal{H} \alpha_{t_k} \otimes 1)(\eta_n) \in \mathcal{H} \bar{\otimes} L_{}^{2}(\bar{\Gamma_q(H)})
	\end{align*}
	Observe that 
	\begin{align*}
		\|(x \otimes_{} 1) \eta_n^k \|_2^2 &= \langle (x \otimes_{} 1)(\alpha_{t_k} \otimes 1) \eta_n, (x \otimes_{} 1)(\alpha_{t_k} \otimes 1)\eta_n \rangle 
	\end{align*}
	Also, for all $x \in \Gamma_q(H)$ we have
	\begin{align*}
		\|(x \otimes 1)\zeta_n^k\|_2 &= \|(x \otimes 1)(P_{\mathcal{H}} \otimes 1)\eta_n^k\|_2\\
		&= \|(P_\mathcal{H} \otimes 1)(x \otimes 1)\eta_n^k\|_2\\
		&\leq \|(x \otimes 1) \eta_n^k]\|_2\\
		&= \|x\|_2
	\end{align*}
	Therefore we have the following claim
	\begin{claim}
		For any $k$ sufficiently large, 
		\[
		\lim_n \|(z \otimes 1) \zeta_n^k\|_2 \geq \delta
		\]
	\end{claim}

	\begin{proof}
		Assume not. Following Houdayer-Shlyakhtenko, we get that this implies that
		\begin{align*}
			\lim_n \|(z \otimes 1) \eta_n^k - (E_{m-1} \alpha_{t_k}(u_k)z \otimes \bar{u_k})\xi_n^k\|_2 &\leq \delta
		\end{align*}
		However,
		\begin{align*}
			\|E_{m-1} \circ \alpha_{t_k}(u_k z)\|_2 &\geq \|E_{m-1} \circ \alpha_{t_k}(u_k)z\|_2 - \|z - \alpha_{t_k}(z)\|_2\\
			&\geq \lim_n \|(E_{m-1}\circ \alpha_{t_k}(u_k)z \otimes \bar{u_k})(\eta_n^k)\|_2 - \delta\\
			&\geq \lim_n \|(z \otimes 1)\eta_n^k\| - 2\delta\\
			&= \|z\|_2 - 2\delta \geq \sqrt{1 - c^2} \|z\|_2
		\end{align*}
		which contradicts \eqref{star}.
	\end{proof}
	From here, we may follow the remainder of the proof in \cite{HS} verbatim in what follows their Claim 3.6 since the bimodule $E_k^\perp$ is weakly contained in the coarse bimodule for sufficiently large $k$. 
	\end{proof}

\bibliography{refs}

\begin{thebibliography}{10}

\bibitem{BKS}
Marek Bo{\.z}ejko, Burkhard K{\"u}mmerer, and Roland Speicher.
\newblock {$q$}-{G}aussian processes: non-commutative and classical aspects.
\newblock {\em Comm. Math. Phys.}, 185(1):129--154, 1997.

\bibitem{BS1}
Marek Bo{\.z}ejko and Roland Speicher.
\newblock An example of a generalized {B}rownian motion.
\newblock {\em Comm. Math. Phys.}, 137(3):519--531, 1991.

\bibitem{chifansinclair}
I.~{Chifan} and T.~{Sinclair}.
\newblock {On the structural theory of \$$\backslash$rm II\_1\$ factors of
  negatively curved groups}.
\newblock {\em ArXiv e-prints}, March 2011.

\bibitem{effroschoi}
Man~Duen Choi and Edward~G. Effros.
\newblock Nuclear {$C\sp*$}-algebras and the approximation property.
\newblock {\em Amer. J. Math.}, 100(1):61--79, 1978.

\bibitem{connesinj}
A.~Connes.
\newblock Classification of injective factors. {C}ases {$II_{1},$} {$II_{\infty
  },$} {$III_{\lambda },$} {$\lambda \not=1$}.
\newblock {\em Ann. of Math. (2)}, 104(1):73--115, 1976.

\bibitem{connesunpub}
A.~Connes.
\newblock Correspondences.
\newblock 1982.

\bibitem{connesjones85}
A.~Connes and V.~Jones.
\newblock Property {$T$} for von {N}eumann algebras.
\newblock {\em Bull. London Math. Soc.}, 17(1):57--62, 1985.

\bibitem{haagerupcowling}
Michael Cowling and Uffe Haagerup.
\newblock Completely bounded multipliers of the {F}ourier algebra of a simple
  {L}ie group of real rank one.
\newblock {\em Invent. Math.}, 96(3):507--549, 1989.

\bibitem{dabrowski}
Y.~{Dabrowski}.
\newblock {A Free Stochastic Partial Differential Equation}.
\newblock {\em ArXiv e-prints}, August 2010.

\bibitem{deCannHaagerup}
Jean De~Canni{\`e}re and Uffe Haagerup.
\newblock Multipliers of the {F}ourier algebras of some simple {L}ie groups and
  their discrete subgroups.
\newblock {\em Amer. J. Math.}, 107(2):455--500, 1985.

\bibitem{effroslance}
Edward~G. Effros and E.~Christopher Lance.
\newblock Tensor products of operator algebras.
\newblock {\em Adv. Math.}, 25(1):1--34, 1977.

\bibitem{ER}
Edward~G. Effros and Zhong-Jin Ruan.
\newblock {\em Operator spaces}, volume~23 of {\em London Mathematical Society
  Monographs. New Series}.
\newblock The Clarendon Press Oxford University Press, New York, 2000.

\bibitem{haagerup79}
Uffe Haagerup.
\newblock An example of a nonnuclear {$C^{\ast} $}-algebra, which has the
  metric approximation property.
\newblock {\em Invent. Math.}, 50(3):279--293, 1978/79.

\bibitem{HS}
C.~{Houdayer} and D.~{Shlyakhtenko}.
\newblock {Strongly solid \$$\{$$\backslash$rm II\_1$\}$\$ factors with an
  exotic MASA}.
\newblock {\em ArXiv e-prints}, April 2009.

\bibitem{kennedynica}
M.~{Kennedy} and A.~{Nica}.
\newblock {Exactness of the Fock space representation of the q-commutation
  relations}.
\newblock {\em ArXiv e-prints}, September 2010.

\bibitem{krolak}
Ilona Kr{\.o}lak.
\newblock Wick product for commutation relations connected with {Y}ang-{B}axter
  operators and new constructions of factors.
\newblock {\em Comm. Math. Phys.}, 210(3):685--701, 2000.

\bibitem{nou}
Alexandre Nou.
\newblock Non injectivity of the {$q$}-deformed von {N}eumann algebra.
\newblock {\em Math. Ann.}, 330(1):17--38, 2004.

\bibitem{Ozawa11}
N.~{Ozawa}.
\newblock {Examples of groups which are not weakly amenable}.
\newblock {\em ArXiv e-prints}, December 2010.

\bibitem{OP1}
Narutaka Ozawa and Sorin Popa.
\newblock On a class of {${\rm II}_1$} factors with at most one {C}artan
  subalgebra.
\newblock {\em Ann. of Math. (2)}, 172(1):713--749, 2010.

\bibitem{OP2}
Narutaka Ozawa and Sorin Popa.
\newblock On a class of {${\rm II}_1$} factors with at most one {C}artan
  subalgebra, {II}.
\newblock {\em Amer. J. Math.}, 132(3):841--866, 2010.

\bibitem{pisierOS}
Gilles Pisier.
\newblock {\em Introduction to operator space theory}, volume 294 of {\em
  London Mathematical Society Lecture Note Series}.
\newblock Cambridge University Press, Cambridge, 2003.

\bibitem{popa06}
Sorin Popa.
\newblock On a class of type {${\rm II}_1$} factors with {B}etti numbers
  invariants.
\newblock {\em Ann. of Math. (2)}, 163(3):809--899, 2006.

\bibitem{popa_malleable_1}
Sorin Popa.
\newblock Strong rigidity of {$\rm II_1$} factors arising from malleable
  actions of {$w$}-rigid groups. {I}.
\newblock {\em Invent. Math.}, 165(2):369--408, 2006.

\bibitem{popa_malleable_2}
Sorin Popa.
\newblock Strong rigidity of {$\rm II_1$} factors arising from malleable
  actions of {$w$}-rigid groups. {II}.
\newblock {\em Invent. Math.}, 165(2):409--451, 2006.

\bibitem{popa08}
Sorin Popa.
\newblock On the superrigidity of malleable actions with spectral gap.
\newblock {\em J. Amer. Math. Soc.}, 21(4):981--1000, 2008.

\bibitem{raynaud2002}
Yves Raynaud.
\newblock On ultrapowers of non commutative {$L_p$} spaces.
\newblock {\em J. Operator Theory}, 48(1):41--68, 2002.

\bibitem{ricard05}
{\'E}ric Ricard.
\newblock Factoriality of {$q$}-{G}aussian von {N}eumann algebras.
\newblock {\em Comm. Math. Phys.}, 257(3):659--665, 2005.

\bibitem{shalom00}
Yehuda Shalom.
\newblock Rigidity, unitary representations of semisimple groups, and
  fundamental groups of manifolds with rank one transformation group.
\newblock {\em Ann. of Math. (2)}, 152(1):113--182, 2000.

\bibitem{shlyakh04}
Dimitri Shlyakhtenko.
\newblock Some estimates for non-microstates free entropy dimension with
  applications to {$q$}-semicircular families.
\newblock {\em Int. Math. Res. Not.}, (51):2757--2772, 2004.

\bibitem{shlyakh09}
Dimitri Shlyakhtenko.
\newblock Lower estimates on microstates free entropy dimension.
\newblock {\em Anal. PDE}, 2(2):119--146, 2009.

\bibitem{sinclair}
Thomas Sinclair.
\newblock Strong solidity of group factors from lattices in {${\rm SO}(n,1)$}
  and {${\rm SU}(n,1)$}.
\newblock {\em J. Funct. Anal.}, 260(11):3209--3221, 2011.

\bibitem{sniady}
Piotr {\'S}niady.
\newblock Factoriality of {B}o\.zejko-{S}peicher von {N}eumann algebras.
\newblock {\em Comm. Math. Phys.}, 246(3):561--567, 2004.

\bibitem{takesakiI}
M.~Takesaki.
\newblock {\em Theory of operator algebras. {I}}, volume 124 of {\em
  Encyclopaedia of Mathematical Sciences}.
\newblock Springer-Verlag, Berlin, 2002.
\newblock Reprint of the first (1979) edition, Operator Algebras and
  Non-commutative Geometry, 5.

\bibitem{VDN}
D.~V. Voiculescu, K.~J. Dykema, and A.~Nica.
\newblock {\em Free random variables}, volume~1 of {\em CRM Monograph Series}.
\newblock American Mathematical Society, Providence, RI, 1992.
\newblock A noncommutative probability approach to free products with
  applications to random matrices, operator algebras and harmonic analysis on
  free groups.

\end{thebibliography}
\bibliographystyle{plain}

\end{document}